\def\newaliasedtheorem#1[#2]#3{
  \newaliascnt{#1@alt}{#2}
  \newtheorem{#1}[#1@alt]{#3}
  \expandafter\newcommand\csname #1@altname\endcsname{#3}
}
\theoremstyle{plain}
\newtheorem{theorem}{Theorem}[section]
\theoremstyle{definition}
\theoremstyle{remark}
\numberwithin{equation}{section}
\def\R{\mathbb R}
\def\C{{\mathbb C}}
\def\N{{\mathbb N}}
\DeclareMathOperator*{\esssup}{ess\,sup}
\title[Dynamical Collapse of dipolar BEC]{Dynamical collapse of cylindrical symmetric dipolar Bose-Einstein condensates} 
\author[J. Bellazzini \and L. Forcella]{Jacopo Bellazzini \and Luigi Forcella}
\address[J. Bellazzini]{Dipartimento di Matematica, Universit\`a Degli Studi di Pisa, Largo Bruno Pontecorvo, 5, 56127, Pisa, Italy}
\email{jacopo.bellazzini@unipi.it}
\address{Luigi Forcella\hfill\break  \'Ecole Polytechnique F\'ed\'erale de Lausanne, Institute of Mathematics, Station 8, CH-1015 Lausanne, Switzerland.}
\email{luigi.forcella@epfl.ch}
\subjclass[2000]{35Q55, 35B40, 82C10, 35J20}
\keywords{Gross-Pitaevskii equation, BEC, NLS-like equation, finite time blow-up, singular integral estimates}
\begin{document}

\maketitle


\begin{abstract}
We study the formation of singularities for  cylindrical  symmetric solutions to the Gross-Pitaevskii equation describing a dipolar Bose-Einstein condensate. We prove  that solutions arising from initial data with energy below the energy of the Ground State and that do not scatter collapse in finite time. The main tools to prove our result are the variational characterization of the Ground State energy, suitable localized virial identities for cylindrical symmetric functions, and  general integral and pointwise estimates for operators involving powers of the Riesz transform.
\end{abstract}

\section{Introduction}

Since the first experimental observation in $1995$  of a quantum state of matter at very low temperature called Bose-Einstein condensate (BEC), see e.g. \cite{AEMWC,BrSaToHu, DMAVDKK }, the study of the asymptotic dynamics of nonlinear equations describing this phenomena rapidly increased, both numerically and theoretically. Since BEC exists in an ultracold and
dilute regime, the most relevant interactions are the isotropic, elastic two-body collisions. After the first pioneering experimental works, other condensates have
been produced with different atoms, in particular condensates made out of particles possessing a permanent electric or magnetic dipole moment.
Such kind of condensates are called dipolar Bose-Einstein condensates, see e.g. \cite{ BaCa, BaCaWa, NaPeSa, PS,  SSZL}, and  their peculiarity  is given by the  long-range anisotropic interaction between particles, in contrast with the short-range, isotropic character of the contact
interaction of BEC.  \\
A dipolar quantum gases is well modelled, see \cite{YY1,YY2,LMS} for the validity of such model, by the Gross-Pitaevskii equation (GPE)
\begin{equation}
\label{eq:evolution}
i h \frac{\partial u}{\partial t} = - \frac{h^2}{2m}\Delta u + W(x) u + U_0|u|^2 u + (V_{dip}\ast |u|^2) u, 
\end{equation}
where the wave function $u=u(t,x).$  Here $t$ is the time variable, $x = (x_1,x_2,x_3)$ is the space coordinate, $h$ is the Planck constant, $m$ is the mass of a dipolar particle and $W(x)$ is an external, real potential which describes the electromagnetic trap. The coefficient $U_0 = 4 \pi h^2 a_s /m$ describes the local interaction between dipoles in the condensate, $a_s$ being the $s$-wave scattering length (positive for repulsive interactions and negative for attractive interactions).
The long-range dipolar interaction potential between two dipoles is given by
\begin{equation*}
V_{dip}(x) = \frac{\mu_0 \mu^2_{dip}}{4 \pi} \,  \frac{1 - 3\cos^2 (\theta)}{|x|^3}, \quad x \in \R^3,
\end{equation*}
where $\mu_0$ is the vacuum magnetic permeability, $\mu_{dip}$ is the permanent magnetic dipole moment and $\theta$ is the angle between the dipole axis $n\in\R^3$ and the vector $x$.  For simplicity, we fix the dipole axis as the vector $n=(0,0,1)$.
The wave function is normalized according to
\begin{equation*}
\int_{\R^3} |u(x,t)|^2\,dx = N,
\end{equation*}
where $N$ is the total number of dipolar particles in the dipolar BEC.  In this work we consider the case when the  trapping potential $W$ is not active, i.e. we freeze $W(x)=0.$\\

As we are interested in the mathematical features of the GPE,  we consider it in its dimensionless form, therefore we write the Cauchy problem associated to \eqref{eq:evolution}  as follows:
\begin{equation}\label{GP}
\left\{ \begin{aligned}
i\partial_{t}u+\frac12\Delta u&=\lambda_1|u|^{2}u+\lambda_2(K\ast|u|^2)u, \quad (t,x)\in \R\times 
\mathbb{R}^3\\
u(0,x)&=u_0(x)
\end{aligned}\right.,
\end{equation}
where  the dipolar kernel $K$ acting in the convolution on the mass density $|u|^2$ is given by 
\begin{equation*}
K(x)=\frac{x_1^2+x_2^2-2x_3^2}{|x|^5}.
\end{equation*}
The two coefficients $\lambda_{1,2}$  involved in the equation are two physical, real parameters defined by
\begin{equation*}
\lambda_1 = 4 \pi a_s N \gamma, \quad \lambda_2 = \frac{mN \mu_0 \mu_{dip}^2 }{4 \pi h^2}\gamma;
\end{equation*}
they describe the strength of the local nonlinearity and the nonlocal nonlinearity, respectively.
Following the terminology introduced by  Carles, Markowich and Sparber in  \cite{CMS}, where the authors give a first mathematical treatment concerning various aspects about local/global well-posedness of \eqref{GP}, 
we consider the partition of the coordinate plane $(\lambda_1,\lambda_2)$ into   the so-called  \emph{Unstable Regime}
\begin{equation}\label{UR}
\left\{ \begin{aligned}
\lambda_1-\frac{4\pi}{3}\lambda_2<0 &\quad \hbox{ if } \quad \lambda_2>0\\
\lambda_1+\frac{8\pi}{3}\lambda_2<0 & \quad\hbox{ if } \quad\lambda_2<0
\end{aligned}\right.,
\end{equation}
and its complementary, the so-called  \emph{Stable Regime}
\begin{equation}\label{SReg}
\left\{ \begin{aligned}
\lambda_1-\frac{4\pi}{3}\lambda_2\geq 0 &\quad \hbox{ if } \quad \lambda_2>0\\
\lambda_1+\frac{8\pi}{3}\lambda_2\geq 0 & \quad\hbox{ if } \quad\lambda_2<0
\end{aligned}\right..
\end{equation}

Heuristically, when comparing \eqref{GP} to the classical cubic NLS (i.e. when $\lambda_2=0$), one can think to the configuration given by \eqref{UR} as the nonlinearity were focusing, and to \eqref{SReg} as the nonlinearity were defocusing. This notation is although incorrect in the context of the GPE, as we will emphasize in some remark later on in the paper, after we introduce some basic notation. 

Solutions to \eqref{GP} conserve along the flow the mass and the energy (besides other quantities not used in this paper); more rigorously  
\begin{equation*}
M(t)=M(u(t)):=\int_{\R^3}|u(t)|^2\,dx=M(0)
\end{equation*}
and
\begin{equation}\label{energy}
E(t)=E(u(t)):=\frac{1}{2}\left(\int_{\R^3}|\nabla u(t)|^2+\lambda_1|u(t)|^4+\lambda_2(K\ast|u(t)|^2)|u(t)|^2\,dx\right)=E(0),
\end{equation} 
for any $t\in(-T_{min},T_{max}),$ where $T_{min},T_{max}\in(0,\infty]$ are the minimal and maximal time of existence of the solution, respectively.  Local existence of solutions to \eqref{GP} was shown in \cite{CMS}, in both the configurations given by \eqref{UR} and \eqref{SReg}.\\

The Unstable Regime \eqref{UR} is of particular relevance, since stationary solutions are allowed in this region. More precisely, we recall that stationary states  are solutions of the following species:
\begin{equation*}
u(x,t) = e^{-i \kappa  t}u(x), 
\end{equation*}
where $u(x)$ is a time-independent function solving the stationary equation
\begin{equation}
\label{eq:maina}
- \frac{1}{2}\Delta u + \lambda_1 |u|^2 u + \lambda_2 (K \ast  |u|^2) u + \kappa u =0
\end{equation}
constrained on the manifold $S(1),$ where  
\begin{equation}\label{constraint1}
S(1) = \{ u \in H^1(\R^3) \ s.t. \ \|u\|_{L^2(\R^3)}^2 =1\},
\end{equation}
and $\kappa \in \R$ is a real parameter usually referred as the chemical potential. We postpone the rigorous discussion about existence of solutions to \eqref{eq:maina} in  \autoref{sec:3}. We introduce now some crucial quantities often used along the paper, and we proceed enunciating the main results of this work and the strategy to get them.
\\

Let us recall some notation consistent  to our previous papers \cite{BJ, BF19}: by means of the Plancherel identity, the energy defined in \eqref{energy} can be rewritten as
\begin{equation*}
E(t)=\frac{1}{2}\int_{\R^3}|\nabla u(t)|^2\,dx+\frac{1}{2(2\pi)^3}\int_{\R^3}\left(\lambda_1+\lambda_2\hat K(\xi)\right)(\widehat{|u(t)|^2} )^2(\xi)\,d\xi
\end{equation*}
where  the Fourier transform of  the dipolar kernel $K$ is explicitly given by
\begin{equation}\label{kernel:fou}
\hat K(\xi)=\frac{4\pi}{3}\frac{2\xi_3^2-\xi_2^2-\xi_1^2}{|\xi|^2}, \qquad \xi\in\R^3.
\end{equation}
We refer to  \cite{CMS} for a proof of the explicit calculation of $\hat K,$ done by means of the decomposition in spherical harmonics of the Fourier character $e^{-ix\cdot\xi}.$ 
\begin{remark}\label{rem:k-rie}
It is worth mentioning right now that, by using  \eqref{kernel:fou}, $\hat K(\xi)$  is a linear combination of  symbols  associated to the square of the Riesz transforms $\mathcal R_j^2$ for $j=1,2,3,$ i.e. $\widehat{\mathcal R_j^2f}(\xi)=-\frac{\xi_j^2}{|\xi|^2}\hat f(\xi).$ Therefore $(K\ast f)(x)$ is a linear combination of $\mathcal R_j^2f(x)$'s.
\end{remark}
A trivial computation provides a lower and an upper bound for $\hat K,$ and more precisely
\begin{equation*}
\hat K\in\left[-\frac43\pi,\frac83\pi\right],
\end{equation*} 
and from the latter it is straightforward to claim that the convolution with $K$ defines an $L^2\mapsto L^2$ continuous operator. 
 
We split the energy as sum of the kinetic and potential energies, respectively defined by 
\begin{equation}\label{kinetic:en}
T(u)=\int_{\R^3}|\nabla u|^2\,dx
\end{equation} 
and
\begin{equation}\label{potential:en}
P(u)=\frac{1}{(2\pi)^3}\int_{\R^3}\left(\lambda_1+\lambda_2\hat K(\xi)\right)(\widehat{|u|^2})^2(\xi)\,d\xi,
\end{equation}
\noindent and we introduce the quantity 
\begin{equation}\label{GG}
G(u)=T(u)+\frac32P(u).
\end{equation}
Moreover, the following useful identity holds true: $E-\frac{1}{3}G=\frac 16 T.$ The functional $G$ naturally appears by means of the Pohozaev identities related to \eqref{eq:maina}, see \cite{AS}.\\

In spite of  the fact that we are primarily interested in solutions satisfying \eqref{constraint1}, we consider, for a positive $c>0,$ the generic manifold
\begin{equation*}
S(c)=\left\{ u \in H^1(\R^3) \ s.t. \ \|u\|_{L^2(\R^3)}^2=c\right\},
\end{equation*}
which will be useful for the mathematical study of existence of standing states, and their variational characterization.  The case $c=1$ clearly corresponds to the mass normalization  expressed in \eqref{constraint1}.
For a fixed $c>0$, the energy $E(u)$ has a mountain pass geometry on $S(c)$ and we denote by $\gamma(c)$ the mountain pass energy at level $c$ to which it corresponds a stationary  state. Again, we refer to \autoref{sec:3} for precise definitions and rigorous results.  

The energy level $\gamma(c)$  has the variational characterization below, that will be essential in the sequel; by introducing the manifold
\[
V(c)=\{u\in H^1(\R^3) \quad s.t. \quad \|u\|_{L^2(\R^3)}^2=c  \hbox{ and } G(u)=0\}
\]
we recall, see \cite{BJ}, that 
\begin{equation}\label{def:vc}
\gamma(c)=\inf\{E(u) \quad s.t. \quad u\in V(c)\}.
\end{equation} 

A contradiction argument in conjunction with a continuity argument implies that provided  $E(u_0)<\gamma(c),$ with $c=\|u_0\|_{L^2(\R^3)}^2$ and $G(u_0)>0,$  the local solution $u\in\mathcal C((-T_{min},T_{max});H^1(\R^3))$ to \eqref{GP} can be extended globally in time, i.e. $T_{min}=T_{max}=\infty,$ and $G(u(t))>0$ for any $t\in\R,$ see \cite[Theorem 1.3]{BJ}.\\

The global existence of solutions under the hypothesis $E(u_0)<\gamma(c),$ with $c=\|u_0\|_{L^2(\R^3)}^2,$ and $G(u_0)>0,$ suggests the possibility that all solutions arising from these initial data scatter, in analogy of what was proved by Duyckaerts, Holmer, and Roudenko in \cite{HR, DHR} for the cubic focusing NLS by exploiting the original approach of concentration/compactness and rigidity method in the spirit of  the Kenig and Merle road map, see \cite{KM1}. The authors in fact recently proved  in \cite{BF19} that the conditions  $E(u_0)<\gamma(c),$ for $c=\|u_0\|_{L^2(\R^3)}^2,$  and  $G(u_0)>0$ imply scattering of solutions to the dipolar Gross-Pitaevskii equation \eqref{GP}. 

The main aim of this paper is to study the asymptotic dynamics in the complementary configuration, i.e.  $E(u_0)<\gamma(c),$ with $c=\|u_0\|_{L^2(\R^3)}^2,$ and $G(u_0)<0.$  We shall underline that even in the case when  $\lambda_2=0$ and $\lambda_1<0$, namely when  \eqref{GP} reduces to the classical focusing cubic nonlinear Schr\"odinger equation 
\begin{equation}\label{NLS}
\left\{ \begin{aligned}
i\partial_{t}u+\frac12\Delta u&=\lambda_1|u|^{2}u, \quad (t,x)\in \R\times 
\mathbb{R}^3\\
u(0,x)&=u_0(x)
\end{aligned}\right.,
\end{equation}
finite time blow-up for  any initial data $u_0\in H^1(\mathbb R^3)$ satisfying the above conditions is still an open problem. To the best of our knowledge, the less restrictive  assumptions in this context are due to Martel, see \cite{Mar}, where the author proves finite time blow-up in  the space of cylindrical symmetric functions with finite variance in the $x_3$ direction and negative energy. Early results of this type are due to Glassey, see \cite{Gla}, in case of finite variance, and Ogawa and Tsutsumi \cite{OT}, in the radial symmetric case (see also Holmer and Roudenko \cite{HR}, and Inui \cite{Inui1, Inui 2} in a more general setting).\\

We are now in position to state our main results and to explain our strategy of the proofs.  Let us define $\bar x=(x_1 ,x_2 )$ and let us introduce the space where we study the formation of singularities: 
\begin{equation*}
\Sigma_3 =\left\{ u \in H^1(\R^3)\quad s.t. \quad u(x)=u(|\bar x|, x_3) \ \hbox{ and } \ x_3 u\in L^2(\R^3) \right\}.
\end{equation*}
$\Sigma_3$ is therefore the space of cylindrical symmetric functions with finite variance in the $x_3$ direction. \\

Our main result is as follows. 
\begin{theorem}\label{thm:main}
Assume that $\lambda_1, \lambda_2$ satisfy \eqref{UR}, namely the belong to the Unstable Regime. Let $u(t)\in \Sigma_3$ be a solution to \eqref{GP} defined on $(-T_{min}, T_{max}),$ with initial datum $u_0$ satisfying $E(u_0)<\gamma(\|u_0\|_{L^2}^2)$ and $G(u_0)<0.$ Then $T_{min}$ and $T_{max}$ are finite, namely $u(t)$ blows-up in finite time. 
\end{theorem}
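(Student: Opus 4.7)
\medskip

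\noindent\textbf{Proof plan.} The strategy follows the localized virial approach pioneered by Ogawa--Tsutsumi and adapted by Martel to the cylindrical setting. We choose a weight of the mixed form
\[
\psi_R(x)=\phi_R(\bar x)+x_3^2,
\]
where $\phi_R\colon\R^2\to[0,\infty)$ is a smooth radial function with $\phi_R(\bar x)=|\bar x|^2$ for $|\bar x|\le R$, $\phi_R(\bar x)\equiv\text{const}$ for $|\bar x|\ge 2R$, and $\nabla^2\phi_R\le 2\mathrm{Id}$ on $\R^2$. Since $u(t)\in\Sigma_3$ is cylindrical symmetric with $x_3u\in L^2$, the virial quantity $I_R(t):=\int_{\R^3}\psi_R(x)|u(t,x)|^2\,dx$ is well defined and finite; no truncation is needed in the $x_3$ direction, which is exactly what makes the argument go through with only partial finite variance.

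The first step is to upgrade the sign condition $G(u_0)<0$ to a uniform quantitative one. Combining conservation of mass and energy with the variational characterization \eqref{def:vc} and a standard continuity/contradiction argument (as in \cite{BJ}), one shows that there exists $\delta>0$, depending only on $\gamma(\|u_0\|_{L^2}^2)-E(u_0)$, such that
\[
G(u(t))\le -\delta\qquad\text{for every }t\in(-T_{min},T_{max}).
\]
In particular $G(u(t))$ stays in the negative sector throughout the lifespan.

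The second step is the computation of $\ddot I_R(t)$. The standard Morawetz/virial calculation, together with the identity $E-\frac13G=\frac16T$, yields
\[
\ddot I_R(t)=4G(u(t))+\mathcal E_{\mathrm{kin}}(t,R)+\mathcal E_{\mathrm{loc}}(t,R)+\mathcal E_{\mathrm{dip}}(t,R),
\]
where the kinetic and local cubic error terms are supported in the region $\{|\bar x|\ge R\}$. The kinetic error $\mathcal E_{\mathrm{kin}}$ is controlled by $\int_{|\bar x|\ge R}|\nabla_{\bar x}u|^2\,dx$, which stays bounded by $T(u(t))$. The local-nonlinearity error $\mathcal E_{\mathrm{loc}}$ is controlled by $\int_{|\bar x|\ge R}|u|^4\,dx$; thanks to the cylindrical symmetry and the Strauss-type decay in the transverse variable $\bar x$, this term can be made smaller than any fixed $\eta>0$ by taking $R=R(\eta)$ large enough, uniformly in $t\in(-T_{min},T_{max})$ (since $T(u(t))$ is bounded in view of the negativity of $G$ and mass conservation).

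The main difficulty, and the heart of the paper, lies in the dipolar error $\mathcal E_{\mathrm{dip}}(t,R)$, which is generated by commuting $\nabla\psi_R$ against the non-local kernel $K\ast|u|^2$. Because of Remark \ref{rem:k-rie}, this term reduces to quantities of the form $\int \chi_R(\bar x)\,\mathcal R_j\mathcal R_k(|u|^2)\,\partial(|u|^2)\,dx$ with $\chi_R$ supported in $\{|\bar x|\ge R\}$. One cannot use the global boundedness of the Riesz transform directly to gain smallness in $R$; instead one needs pointwise/integral estimates for Riesz-type operators acting on cylindrical symmetric densities, exploiting the cancellations and the transverse decay. This is where the singular-integral estimates advertised in the abstract are used to show that $\mathcal E_{\mathrm{dip}}(t,R)\to0$ as $R\to\infty$, uniformly in $t$. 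Once this is established, picking $R$ large so that the total error is at most $\delta$ gives
\[
\ddot I_R(t)\le 4G(u(t))+\delta\le -3\delta<0
\]
throughout the lifespan. A double time integration then forces $I_R(t)$ to become negative in finite time, contradicting $\psi_R\ge0$; hence $T_{min},T_{max}<\infty$. The main obstacle, as stressed above, will be the uniform-in-$R$ smallness of the dipolar commutator term, which is genuinely a new ingredient compared to the pure NLS case treated by Martel.
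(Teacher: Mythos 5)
Your overall architecture (mixed weight $\phi_R(\bar x)+x_3^2$, uniform negativity of $G$, localized virial, Strauss decay in $\bar x$, singular-integral estimates for the dipolar commutator, Glassey convexity) matches the paper's. But there is a genuine gap in how you absorb the error terms. You claim that $\mathcal E_{\mathrm{loc}}$ and $\mathcal E_{\mathrm{dip}}$ can be made smaller than a fixed $\eta$ \emph{uniformly in $t$} by taking $R$ large, ``since $T(u(t))$ is bounded in view of the negativity of $G$ and mass conservation.'' This is false: from $E-\tfrac13G=\tfrac16T$ one gets $T(u(t))=6E(u_0)-2G(u(t))\ge 6E(u_0)+2\delta$, i.e.\ the negativity of $G$ gives a \emph{lower} bound on the kinetic energy, not an upper bound; indeed $T(u(t))$ must diverge at the blow-up time, so no uniform-in-$t$ bound can hold. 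What the Strauss embedding and the Riesz-transform estimates actually deliver is an error of size $O(R^{-2})+O(R^{-1})\,\|u(t)\|_{\dot H^1}^2$, which is not uniformly small. With only the qualitative bound $G(u(t))\le-\delta$ from your first step, you cannot absorb an error proportional to $\|u(t)\|_{\dot H^1}^2$, and the inequality $\ddot I_R\le 4G+\delta\le-3\delta$ does not follow.

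The missing ingredient is the quantitative coercivity $G(u(t))\le-\alpha\|u(t)\|_{\dot H^1}^2$ for some $\alpha>0$ on the whole lifespan (the paper's \autoref{lem:2.3}, deduced from $G\le-\delta$ and the identity $T=6E-2G$), together with the lower bound $\inf_t\|u(t)\|_{\dot H^1}\ge c>0$ (\autoref{coro1}). Then the main term $4G(u(t))$ itself scales like $-\|u(t)\|_{\dot H^1}^2$ and dominates the $O(R^{-1})\|u(t)\|_{\dot H^1}^2$ errors once $R$ is large, yielding $\ddot I_R(t)\lesssim-\|u(t)\|_{\dot H^1}^2\lesssim-1$ and closing the convexity argument. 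Separately, your description of $\mathcal E_{\mathrm{dip}}$ as entirely supported in $\{|\bar x|\ge R\}$ understates the structure: because the $x_3^2$ weight is unbounded, one must first extract the exact main contribution $-3\int(K\ast|u|^2)|u|^2\,dx$ (needed to reconstitute $4G$) via the identity $2\int x\cdot\nabla(K\ast f)\,f\,dx=-3\int(K\ast f)f\,dx$ and an interior/exterior splitting $u=u_i+u_o$, and only the cross terms (involving $\mathcal R_j^2$ and $\mathcal R_3^4$ applied to functions with separated cylindrical supports) are estimated as errors. As a plan this omission is forgivable, but the uniform-boundedness claim for $T(u(t))$ must be repaired as above.
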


As a consequence of \autoref{thm:main} we give  a generalization of the result by  Martel in \cite{Mar}. We extend here  that result for all positive initial energies under the energy threshold given by the Ground State associated to NLS (which corresponds to the one given in \eqref{eq:maina} for $\lambda_2=0$). Even if the following theorem can be viewed as a straightforward corollary of  \autoref{thm:main}, we prefer to state it as an independent result, since it has its own interest.

\begin{theorem}\label{thm:main-nls}
Given a solution $u(t)\in \Sigma_3$ to \eqref{NLS} with $\lambda_1<0$ defined on $(-T_{min}, T_{max}),$ with initial datum $u_0$ satisfying $E(u_0)<\gamma(\|u_0\|_{L^2}^2)$ and $G(u_0)<0,$ then $T_{min}$ and $T_{max}$ are finite, namely $u(t)$ blows-up in finite time. 
\end{theorem}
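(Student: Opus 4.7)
The plan is to deduce \autoref{thm:main-nls} from \autoref{thm:main} by observing that the focusing cubic NLS \eqref{NLS} with $\lambda_1<0$ is the specialization of \eqref{GP} to $\lambda_2=0$. Technically the choice $\lambda_2=0$ sits on the boundary between \eqref{UR} and \eqref{SReg}, so I cannot quote \autoref{thm:main} as a black box; however, the proof scheme behind \autoref{thm:main} goes through \emph{a fortiori}. Indeed, when $\lambda_2=0$, the variational characterization \eqref{def:vc} makes perfect sense and reduces to the standard mountain-pass level for the focusing cubic NLS (attained by the usual positive radial Ground State), while $G(u)$ defined in \eqref{GG} collapses to $T(u)+\frac{3\lambda_1}{2}\int_{\R^3}|u|^4\,dx$. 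The global-in-time invariance of $\{G<0\}$ under the flow, inherited from the continuity/contradiction argument recalled after \eqref{def:vc}, remains valid in this limit, so that the assumptions $E(u_0)<\gamma(\|u_0\|_{L^2}^2)$ and $G(u_0)<0$ propagate to $G(u(t))\le -\delta<0$ uniformly in $t$, for some $\delta>0$ extracted from \eqref{def:vc}.

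The core of the argument is then a \emph{localized virial identity} on $\Sigma_3$ adapted to the cylindrical symmetry. I would take a weight of the form $\phi(x)=\psi_R(\bar x)+x_3^2$ where $\psi_R$ is a radial cutoff in $\bar x$ which equals $|\bar x|^2$ for $|\bar x|\le R$, is smoothly bounded for $|\bar x|\ge 2R$, and satisfies $\Delta\psi_R\le 4$ and $0\le 2-\psi_R''(r)\le C$ everywhere. A direct computation gives
\[
\frac{d^2}{dt^2}\int_{\R^3}\phi\,|u(t)|^2\,dx \;=\; 4\,G(u(t))+\mathcal E_R(u(t)),
\]
with the error $\mathcal E_R(u(t))$ supported in $\{|\bar x|\ge R\}\times\R$ and bounded by a universal constant times
\[
\int_{|\bar x|\ge R}|\nabla_{\bar x}u(t)|^2\,dx+\int_{|\bar x|\ge R}|u(t)|^4\,dx.
\]
The finite $x_3$-variance produces no boundary contribution in the $x_3$-direction, as in Martel.

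The decisive step is to beat $\mathcal E_R$ against $|G(u(t))|$. Here the cylindrical symmetry is crucial: for almost every $x_3$, the slice $\bar x\mapsto u(t,\bar x,x_3)$ is a radial function on $\R^2$, so the two-dimensional radial Strauss-type decay estimate yields
\[
\int_{|\bar x|\ge R}|u(t,\bar x, x_3)|^4\,d\bar x\;\lesssim\;\frac{1}{R}\,\|u(t,\cdot,x_3)\|_{L^2(\R^2)}^2\|\nabla_{\bar x} u(t,\cdot,x_3)\|_{L^2(\R^2)}^2,
\]
and integrating in $x_3$ controls the $L^4$-part of $\mathcal E_R$ by $R^{-1}\,M(u_0)\,T(u(t))$. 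Since the uniform bound $G(u(t))\le -\delta$ combined with $E-\tfrac13 G=\tfrac16 T$ also gives a uniform upper bound on $T(u(t))$, choosing $R$ large enough yields $\mathcal E_R(u(t))\le 2\delta$ and therefore
\[
\frac{d^2}{dt^2}\int_{\R^3}\phi\,|u(t)|^2\,dx \;\le\; -2\delta\quad\text{for all }t\in(-T_{min},T_{max}).
\]
Since $\int\phi|u|^2\,dx\ge 0$, this forces $T_{min},T_{max}<\infty$.

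The main obstacle is the localization error $\mathcal E_R$: only the $x_3$-variance is finite, so the $\bar x$-direction must be handled by truncation, and the Strauss-type gain in dimension two is essential to absorb the error into the negative virial. In the pure NLS setting this is noticeably cleaner than in \autoref{thm:main}, where the same step must additionally tame the nonlocal dipolar convolution through singular-integral estimates for $\mathcal R_j^2$ as mentioned in \autoref{rem:k-rie}; dropping $\lambda_2$ removes exactly that layer of difficulty.
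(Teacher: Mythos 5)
Your overall architecture is the same as the paper's: a virial weight $\rho_R(\bar x)+x_3^2$ that is localized only in the $\bar x$-plane, the two-dimensional radial Strauss decay to control the exterior $L^4$ term by $R^{-1}\|u\|_{\dot H^1}^2$, and a convexity argument. However, the final absorption step contains a genuine error. You claim that ``the uniform bound $G(u(t))\le-\delta$ combined with $E-\tfrac13G=\tfrac16T$ also gives a uniform upper bound on $T(u(t))$.'' It does not: that identity gives $T(u(t))=6E(u_0)-2G(u(t))$, so $G(u(t))\le-\delta$ yields the \emph{lower} bound $T(u(t))\ge 6E(u_0)+2\delta$, and $T(u(t))$ grows without bound exactly when $G(u(t))\to-\infty$. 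Worse, an a priori uniform upper bound on $T(u(t))=\|u(t)\|_{\dot H^1}^2$ would, by the blow-up alternative, make the solution global and contradict the very statement you are proving. Consequently your bound $\mathcal E_R(u(t))\lesssim R^{-1}M(u_0)T(u(t))\le 2\delta$ cannot be achieved by choosing $R$ large once and for all. The paper closes this gap with the quantitative coercivity of \autoref{lem:2.3}, namely $G(u(t))\le-\alpha\|u(t)\|_{\dot H^1}^2$ uniformly in $t$ (proved from \autoref{lem:2.2} and the identity $E-\tfrac13G=\tfrac16T$), so that $4G(u(t))+CR^{-1}\|u(t)\|_{\dot H^1}^2\le-2\alpha\|u(t)\|_{\dot H^1}^2$ for $R\gg1$ \emph{regardless} of the size of $\|u(t)\|_{\dot H^1}$, and then invokes the uniform lower bound $\|u(t)\|_{\dot H^1}\ge c>0$ of \autoref{coro1} to obtain $\frac{d^2}{dt^2}V\le-2\alpha c^2<0$. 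Your proof must be repaired along these lines; the mere qualitative bound $G\le-\delta$ is not enough.

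A secondary point: you state that the localization error is bounded by a constant times $\int_{|\bar x|\ge R}|\nabla_{\bar x}u|^2\,dx+\int_{|\bar x|\ge R}|u|^4\,dx$ and then estimate only the $L^4$ part. The gradient part carries no factor of $R^{-1}$ and cannot be made small by enlarging $R$; it has to be eliminated by sign. This is why the paper builds $\psi$ so that $\psi'(r)+2r\psi''(r)-1=-\int_0^r\eta-2r\eta(r)\le0$, making the Hessian contribution in the exterior region nonpositive (see \eqref{eq:pos}); the conditions you impose on $\psi_R$ ($\Delta\psi_R\le4$ and $0\le 2-\psi_R''\le C$) do not by themselves guarantee this, so you should either adopt the paper's construction or verify the sign condition explicitly.
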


We point out some feature of the dipolar GPE.

\begin{remark}
Blow-up in finite time for the focusing cubic NLS in the whole generality, i.e. for infinite-variance initial data and without assuming any symmetry, is still an open problem. See the beginning of \autoref{sec:NLS} for up-to-date references. 
\end{remark}

\begin{remark}
A usual assumption that often appears in literature in order to simplify the analysis of a model which cannot be treated in a full generality,  is the restriction to a radial setting.  The dipolar kernel $K(x)$ is a Calder\'on-Zigmund operator  of the form $|x|^{-3}\mathcal O(x)$ where $\mathcal O$ is a  zero-order function having zero average on the sphere. This implies that the restriction to radial symmetric solutions makes disappear the effect of the nonlocal term in \eqref{GP}, hence the equation reduces to the classical NLS equation in the radial framework, see \cite{CMS}.
\end{remark}

\begin{remark}
We restrict  the functional space to  functions belonging to $H^1(\R^3)$ with finite variance in the $x_3$ direction. As it will be clear along the  proof a decay estimate for the potential energy in the exterior of a cylindrical domain will be crucial. Indeed, the finite variance in the $x_3$ direction enables us to localize the potential energy on the exterior of a cylinder.
\end{remark}
\begin{remark}
As remarked in \cite{CMS}, for $0<\lambda_1<\frac{4\pi}{3}\lambda_2$ -- namely when the local nonlinearity is defocusing, and the coupling parameter $\lambda_2$ is positive, which is the physical case -- finite time blow-up may arise, so  that is improper  to speak about ``defocusing/focusing'' for the dipolar BEC. The nonlocal interaction then can yield to formation of singularities in finite time of the solutions. 
\end{remark}
\begin{remark}
From the identity \eqref{GG} and the fact that there exists a positive constant $\delta>0$ such that $G(u(t))\leq-\delta$ for any $t\in(-T_{min}, T_{max})$ (see \autoref{lem:2.2} below), it is straightforward to see that the assumption $G(u_0)<0$ implies that $P(u(t))<0$ for any time in the maximal interval of existence of the solution to \eqref{GP}. 
This is in contrast of what happens in the counterpart scenario $G(u_0)>0.$ In the latter case, working  in the Unstable Regime \eqref{UR} does not guarantee that the potential energy $P(u(t))$ preserves the sign, as we proved in \cite{BF19}.
\end{remark}

We turn now to state the ingredients we use in order to prove our main theorems. The strategy and the main difficulties are the following.
\begin{itemize} 
\item A variational characterization of the Ground State energy which firstly permits to  prove that $\|u(t)\|_{\dot H^1(\R^3)}$ is bounded from below uniformly in time  and that  $G(u(t))<-\delta$ for all times in the maximal interval of existence of the solution.  As a byproduct,  which is crucial for what follows, it exploits the bound $G(u(t))\leq -\tilde\delta \|u(t)\|_{\dot H^1(\R^3)}^2,$ for some $\tilde\delta>0.$
\item The virial identities, valid both for \eqref{GP} and \eqref{NLS}. We define, following Martel \cite{Mar}, 
\begin{equation*}
V_{\rho}(t):=V_{\rho}(u(t))=2\int_{\R^3} \rho_R(x)|u(t,x)|^2\,dx,
\end{equation*}
where  
 $\rho,$ which is in particular a well-constructed  function depending only on the two variables  $\bar x=(x_1,x_2)$ which provides a localization in the exterior of a cylinder, parallel to the $x_3$ axis and with radius of size $|\bar x|\sim R.$   Here $|\bar x|$ clearly denotes $|\bar x|:=(x_1^2+x_2^2)^{1/2}.$
Moreover we consider the not-localized  function $x_3^2$ in order to obtain a virial-like estimate of the form
\begin{equation}\label{eq:intro-vir}
\frac{d^2}{dt^2}V_{\rho_R+x_3^2}(t)\leq \underbrace{4\int_{\R^3}|\nabla u(t)|^2 dx +6 \lambda_1\int_{\R^3}|u(t)|^4 dx }_{=:\,\mathcal G}+cR^{-2}+H_R(u(t)),
\end{equation}
where the error  $H_R$ is defined by
\[
\begin{aligned}
H_R(u(t))&=4\lambda_1 \int_{\R^3}F_R(\bar x)|u(t)|^4\, dx+2\lambda_2\int_{\R^3}\nabla\rho_R\cdot\nabla\left(K\ast|u(t)|^2\right)|u(t)|^2\,dx\\
&-4\lambda_2\int_{\R^3} x_3\partial_{x_3}\left(K\ast|u(t)|^2\right)|u(t)|^2\,dx
\end{aligned}
\]
and $F_R(\bar x)$ is a nonnegative function supported in the exterior of a cylinder of  radius of order $R$.   
\item In the case $\lambda_1<0$, $\lambda_2=0$ (namely NLS), the decay property of the $L^4$-norm of a function $f$ supported outside a cylinder of radius of order $R$,  more precisely the estimate  $\|f\|_{L^4(|\bar x|\geq R)}^4\lesssim R^{-1}\| f\|_{\dot H^1(\R^3)}^2$, together with the localized virial identities, implies finite time blow-up by a convexity argument. We underline that everything works well since we are able to prove that  $\| u(t)\|_{\dot H^1(\R^3)}^2$ controls either $G(u(t))$  or the remainder term $H_R(u(t)).$ Note that in this case the quantity  $\mathcal G$ in the r.h.s. of \eqref{eq:intro-vir} precisely defines $4G(u(t))$ in the context of \eqref{NLS}.
\item When $\lambda_2 \neq 0$ we have to deal with the effect of the dipolar interaction term -- incorporated in $H_R(u(t))$ in \eqref{eq:intro-vir} -- that is nonlocal and that is neither always positive nor always negative. As already pointed out, see \autoref{rem:k-rie}, $K\ast\cdot$ acts as the sum (up to some constant coefficients) of square of Riesz transforms.
Our strategy is to split $u$ (we omit the time dependence) by separating it in the interior and in the exterior of a cylinder, namely $u=u_i+u_o$ where 
\[u_i=\bold 1_{\{|\bar x|\leq CR\}}u \quad \hbox{ and } \quad u_o=\bold 1_{\{|\bar x|\geq CR\}}u,
\] 
and computing the interaction given by the dipolar term. Here $\bold 1$ denotes the indicator function on a measurable set. The problem here is that $K\ast|u_i|^2$ is not supported inside any cylinder.  A crucial tool is given by the pointwise estimate
\[|\bold 1_{\{|\bar x|\leq \gamma_1R\}}(x)\mathcal R_j^2[(1-\bold 1_{\{|\bar x|\leq \gamma_2R\}})f](x)|\leq CR^{-3}\bold 1_{\{|\bar x|\leq \gamma_1R\}}(x)\|f\|_{L^1(|\bar x|\geq\gamma_2R)}\]
where $\gamma_1$ and $\gamma_2$ are positive parameters satisfying $d:=\gamma_2-\gamma_1>0,$ see \autoref{lemma:in-out}.
\item In order to control the remainder term and  to make appear the   $6\lambda_2\int_{\R^3}(K\ast|u(t)|^2)|u(t)|^2\,dx$ term in \eqref{eq:intro-vir} that will yield to the whole quantity $4G(u(t)),$ see \eqref{GG}, we need to use the identity $2\int_{\R^3} x\cdot\nabla\left(K\ast f\right)f\,dx=-3\int_{\R^3}\left(K\ast f\right)f\,dx$. The latter follows from the relation $\xi\cdot \nabla_\xi\hat K=0.$  The difficulty here comes from the fact that the localization function used in the virial identities $\rho_R(\bar x)$ satisfies 
\[
\rho_R(\bar x)=
\begin{cases}
|\bar x|^2 &\hbox{ if }\, |\bar x|<R\\
\hbox{constant}  &\hbox{ if }\, |\bar x|>2R
\end{cases},
\] 
while the function $\rho=x_3^2$ is unbounded.  By observing that 
\[
\begin{aligned}
2\int_{\R^3} x_3\partial_{x_3}\left(K\ast|u_i|^2\right)|u_o|^2\,dx+2\int_{\R^3}x_3\partial_{x_3}\left(K\ast|u_o|^2\right)|u_i|^2\,dx\\
=-2\int_{\R^3}\left(K\ast |u_i|^2\right)|u_o|^2\,dx-2\int_{\R^3}\xi_3(\partial_{\xi_3}\hat K)\widehat{ |u_i|^2 }\bar{\widehat{|u_o|^2}}\,d\xi
\end{aligned}
\]
and that 
\[
\xi_3\partial_{\xi_3}\hat K=8\pi\frac{\xi_3^2(\xi_1^2+\xi_2^2)}{|\xi|^4}=8\pi\left(\frac{\xi_3^2}{|\xi|^2}-\frac{\xi_3^4}{|\xi|^4}\right)=8\pi \widehat{\mathcal{R}_3^2}-8\pi\widehat{\mathcal{R}_3^4},
\]
we reduce the problem to the estimate of $|\langle \mathcal R_3^4f,g\rangle_{L^2}|$ when $f$ is supported in $\{|\bar x|\geq \gamma_2R\}$ while $g$ is supported in $\{|\bar x|\leq \gamma_1R\},$ for some positive parameters $\gamma_1$ and $\gamma_2$ satisfying $d:=\gamma_2-\gamma_1>0.$ Here $\mathcal R_j^4$ denotes the fourth power of the Riesz transform, and $\widehat{\mathcal{R}_j^4}$ its symbol in Fourier space.
\item We compute $|\langle \mathcal R_3^4f,g\rangle_{L^2}|$ by means of some corollaries (see \autoref{lemma:decay-r4} and \autoref{lemma:decay-r2-2}) of a more general result from harmonic analysis, related to the representation of a Fourier operator $T$ whose symbol is  homogeneous of degree zero, in conjunction with the localization properties of the supports of functions where $T$ is acting on, see \autoref{thm:zero-degree}.

\item All the previous points will bring to the final estimate  
\[
\frac{d^2}{dt^2}V_{\rho_R+x_3^2}(t)\leq 4G(u(t))+\epsilon_{R}(u(t)),
\] 
where $G(u(t))\lesssim -\tilde\delta\|u(t)\|_{\dot H^1(\R^3)}^2$ and $\epsilon_R(u(t))\lesssim o_{R}(1)\|u(t)\|_{\dot H^1(\R^3)}^2,$ which in turn implies the finite time blow-up via a convexity argument, provided $R\gg1.$ 
\end{itemize}

\subsection{Notation and structure of the paper}
We collect here the notation used along the paper and we disclose how the paper is organized.
We work in the three dimensional space $\R^3,$ and for a vector $x=(x_1,x_2,x_3)\in\R^3$ we denote by $\bar x\in\R^2$ the vector $\bar x=(x_1,x_2)$ given by the first two components of $x\in\R^3.$ The differential operators $\nabla$ and $\nabla\cdot$ are the common gradient and divergence operator in $\R^3.$ When using the subscript $\bar x,$ i.e. $\nabla_{\bar x}$ or $\nabla_{\bar x}\cdot,$ we mean that we are considering them as operators on $\R^2$ with respect to the variables $(x_1,x_2)$ alone. The operator $\mathcal Ff(\xi)=\hat f(\xi)=\int e^{-ix\xi}f(x)\,dx$ is the standard Fourier Transform, $\mathcal F^{-1}$ being its inverse. $\mathcal R_j$ is the $j$-th Riesz transform defined vie the Fourier symbol $-i\frac{\xi_j}{|\xi|},$ i.e. $\mathcal R_jf(x)=\mathcal F^{-1}\left(-i\frac{\xi_j}{|\xi|}\hat f\right)(x).$ Powers of the Riesz transform are defined by means of powers of their symbols analogously.
For $1\leq p\leq \infty$ and $\Omega\subseteq\R^3,$  $L^p(\Omega)=L^p(\Omega;\mathbb C)$ are the classical Lebesgue spaces endowed with norm $\|f\|_{L^p}=\left(\int_{\Omega}|f(x)|^p\,dx\right)^{1/p}$ if $p\neq\infty$ or $\|f\|_{L^\infty}=\esssup_{x\in\Omega}|f(x)|$ for $p=\infty.$ When $\Omega=\R^3$ we simply write $L^p.$ For a function $f(x),$ $x\in\R^3,$ we denote $\|f\|_{L^p_{\bar x}}=\|f(\cdot, x_3)\|_{L^p_{\bar x}(\R^2)}$ and similarly for more general domains $\Omega\subset \R^2.$
We set $H^1=H^1(\mathbb R^3;\mathbb C):=\{f \, s.t. \,  \int_{\R^3}(1+|\xi|^2)|\hat f(\xi)|^2\,d\xi<\infty\}$ and its homogeneous version $\dot H^1=\dot H^1(\mathbb R^3;\mathbb C):=\{f \, s.t. \,  \int_{\R^3}|\xi|^2|\hat f(\xi)|^2\,d\xi<\infty\}$ endowed with their natural norms. Since we work on $\R^3,$ we simply denote $\int f\,dx=\int_{\R^3}f\,dx$ and often we write $\int f\,dx=\int_{\R}\int_{\R^2}f(\bar x,x_3)\,d\bar x\,dx_3.$ The expression $(f\ast g)(x):=\int f(x-y)g(y)\,dy$ denotes the convolution operator between $f$ and $g.$ The $L^2$ inner product between two function $f,g$ is denoted by $\langle f,g\rangle=\langle f,g\rangle_{L^2}:=\int fg\,dx.$  $\Re{z}$ and $\Im{z}$ are the common notations for the real and imaginary parts of a complex number $z.$ When the bar-symbol over-lines a complex-valued function, we mean the complex conjugate.  Given a measurable set $\mathcal A\subseteq\R^3,$ $\bold 1_{\mathcal A}(x)$ is the indicator function of $\mathcal A.$ Finally, given two quantities $A$ and $B,$ we denote $A \lesssim B$ ($A\gtrsim B,$ respectively) if there exists a positive constant $C$ such that $A \leq CB$ ($A\geq CB,$ respectively). If both  relations hold true, we write $A\sim B.$\\

In \autoref{sec:2} we prove the essential integral and pointwise estimates for powers of the Riesz transforms for suitably localized functions. In \autoref{sec:3} we discuss the geometry of the energy functional and we disclose several properties leading to the control of the functional $G$ in terms of $\|u\|_{\dot H^1}.$ In \autoref{sec:NLS} we prove the blow-up in finite time for the focusing cubic NLS stated in \autoref{thm:main-nls}, then we conclude with \autoref{sec:GPE} where we prove the main result of the paper, namely the finite time blow-up for the dipolar GPE. We collect in the \autoref{app:A} some useful identities used along the proofs in the paper. In  \autoref{app:B} we make a connection between the fourth power of the Riesz transform with the propagator associated to the linear parabolic biharmonic equation, which has its own interest, and it could be used to give an alternative proof for the integral decay estimates in \autoref{sec:2}.

\section{Localization properties of the dipolar kernel}\label{sec:2}

This section provides the first technical tools we need in order to prove our main result concerning the finite time blow-up for the GPE \eqref{GP}. In the next Lemmas we prove some decay estimates -- pointwise and integral estimates -- regarding the square and the 4-th power of the Riesz transforms for suitably localized functions. 
We prove this decay by employing a general harmonic analysis tool which gives an explicit characterization of homogeneous distributions.
Subsequently, we prove the  pointwise estimates for $\mathcal R_j^2$ by using the explicit representation of $\mathcal R^2_j$ in terms of the singular integral defined in the principal value sense.

\subsection{Integral estimates for $\mathcal R^4_j$}\label{subsect:R4}  We start with the integral estimates for the fourth power of the Riesz transform, and, as anticipated above,  we use a general result in harmonic analysis regarding the characterization of homogeneous distribution on $\R^n$ of degree $-n,$ coinciding with a regular function in $\R^n\setminus\{0\}.$ For our purposes, likely along the whole paper, we just consider $n=3.$ The main contribution of this section is as follows.
\begin{prop}\label{thm:zero-degree}
Let $T$ an operator defined by means of a Fourier symbol $m(\xi),$ which is smooth in $\R^3\setminus\{0\}$ and is a homogenous function of degree zero, i.e. $m(\lambda \xi)=m(\xi)$ for any $\lambda>0.$ For any couple of functions $f,g\in L^1$ having disjoint supports, we have the following estimate:
\begin{equation}\label{eq:int-decay-general}
|\langle Tf,g\rangle|\lesssim \left(\mbox{dist}(\mbox{supp}(f), \mbox{supp}(g))\right)^{-3}\|g\|_{L^1}\|f\|_{L^1}.
\end{equation}
\end{prop}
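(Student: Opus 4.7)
The plan is to reduce the bound to a pointwise estimate on the convolution kernel of $T$ on the open set where source and target are separated, and then conclude by Fubini and the $L^1$ hypotheses on $f$ and $g$.

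First I would invoke the classical characterization of tempered distributions whose Fourier transform is a degree-zero homogeneous smooth function on $\R^3\setminus\{0\}$. Because $m$ is smooth on the unit sphere, its inverse Fourier transform is a tempered distribution $K$ on $\R^3$ which, away from the origin, coincides with a smooth function of the form
\[
K(x)=\frac{\Omega(x/|x|)}{|x|^3}, \qquad x\neq 0,
\]
where $\Omega$ is a smooth function on $S^2$ (with zero average, up to an additive $\delta_0$ contribution coming from a possible constant part of $m$). In particular, $K$ is homogeneous of degree $-3$ on $\R^3\setminus\{0\}$ and $\|K\|_{L^\infty(|x|\geq d)}\leq C d^{-3}$, with $C$ controlled by $\|\Omega\|_{L^\infty(S^2)}$ and hence by finitely many seminorms of $m$ on the sphere.

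Next I would rewrite $\langle Tf,g\rangle$ as a double integral against $K$. Set $d:=\mathrm{dist}(\mathrm{supp}(f),\mathrm{supp}(g))>0$. For $x\in\mathrm{supp}(g)$ and $y\in\mathrm{supp}(f)$ one has $|x-y|\geq d>0$, so the argument of $K$ avoids the singular set and the Dirac/principal-value subtleties do not enter. To make this rigorous one can approximate: multiply $m$ by a smooth cut-off $\chi_\varepsilon$ that vanishes near $\xi=0$ and decays at infinity so that $T_\varepsilon$ has a Schwartz kernel $K_\varepsilon$; then $T_\varepsilon\to T$ in the sense of distributions, $K_\varepsilon\to K$ on compact subsets of $\R^3\setminus\{0\}$ and $|K_\varepsilon|\leq C|x|^{-3}$ uniformly. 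Passing to the limit and using Fubini, we get
\[
\langle Tf,g\rangle = \iint_{\mathrm{supp}(g)\times\mathrm{supp}(f)} K(x-y)\,f(y)\,\overline{g(x)}\,dy\,dx.
\]

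Finally, the pointwise bound $|K(x-y)|\leq C d^{-3}$ on the region of integration yields
\[
|\langle Tf,g\rangle|\leq C d^{-3}\int|f(y)|\,dy\int|g(x)|\,dx=C\bigl(\mathrm{dist}(\mathrm{supp}(f),\mathrm{supp}(g))\bigr)^{-3}\|f\|_{L^1}\|g\|_{L^1},
\]
which is \eqref{eq:int-decay-general}. The only potentially delicate point is the justification of the kernel representation since $K$ is only a tempered distribution globally, but the disjointness of the supports removes the neighbourhood of the singularity at the origin, so the argument reduces to the standard fact that the off-diagonal part of a homogeneous Calder\'on--Zygmund kernel is a genuine locally integrable function, bounded by $Cd^{-3}$ on $\{|x-y|\geq d\}$.
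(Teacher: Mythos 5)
Your argument is correct and is essentially the proof given in the paper: both invoke the classical characterization of the inverse Fourier transform of a smooth degree-zero homogeneous symbol as $|x|^{-3}\Omega(x/|x|)$ plus a multiple of $\delta_0$, discard the Dirac part using the disjointness of the supports, and conclude from the pointwise bound $|K(x-y)|\lesssim d^{-3}$ together with the $L^1$ norms of $f$ and $g$. The extra care you take with the regularization $T_\varepsilon$ to justify the kernel representation is a welcome refinement but does not change the substance of the argument.
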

By observing that the symbols defining $\mathcal R_i^4$ and $\mathcal R_k^2\mathcal R_h^2$ are given by $m(\xi)=\frac{\xi_i^4}{|\xi|^4}$ and $m(\xi)=\frac{\xi_k^2 \xi_h^2}{|\xi|^4}$, respectively, which fulfil the hypothesis of \autoref{thm:zero-degree}, we straightforwardly have the following corollaries. It is worth mentioning that for our applications, we will consider as $f$ and $g$ some cut-off of the density of the mass $|u(t)|^2,$ which is clearly in $L^1$.
\begin{corollary}\label{lemma:decay-r4}
Assume that $f,g\in L^1$ and that $f$ is supported in $\{|\bar x|\geq \gamma_2R\}$ while $g$ is supported in $\{|\bar x|\leq \gamma_1R\},$ for some positive parameters $\gamma_1$ and $\gamma_2$ satisfying $d:=\gamma_2-\gamma_1>0.$ Then 
\begin{equation*}\label{eq:est-r4}
|\langle \mathcal R_i^4f,g\rangle|\lesssim R^{-3}\|g\|_{L^1}\|f\|_{L^1}.
\end{equation*}
\end{corollary}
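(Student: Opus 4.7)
The statement is set up precisely as a corollary of \autoref{thm:zero-degree}, so my plan is to verify the hypotheses of that proposition with $T=\mathcal R_i^4$ and extract the distance bound from the cylindrical support conditions.

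First I would check the symbol hypothesis: the operator $\mathcal R_i^4$ is defined by the Fourier multiplier
\[
m(\xi)=\frac{\xi_i^4}{|\xi|^4},
\]
which is clearly smooth on $\R^3\setminus\{0\}$ and satisfies $m(\lambda\xi)=m(\xi)$ for every $\lambda>0$, so it is homogeneous of degree zero. Hence \autoref{thm:zero-degree} is applicable provided $f$ and $g$ have disjoint supports, which I will establish via the cylindrical localizations.

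Next I would estimate the distance between the supports. For any $x\in\mbox{supp}(g)$ and $y\in\mbox{supp}(f)$ one has $|\bar x|\leq\gamma_1 R$ and $|\bar y|\geq\gamma_2 R$, so
\[
|x-y|\geq|\bar x-\bar y|\geq|\bar y|-|\bar x|\geq(\gamma_2-\gamma_1)R=dR>0.
\]
In particular the supports are disjoint and $\mbox{dist}(\mbox{supp}(f),\mbox{supp}(g))\geq dR$. Inserting this into the general bound \eqref{eq:int-decay-general} of \autoref{thm:zero-degree} yields
\[
|\langle\mathcal R_i^4 f,g\rangle|\lesssim (dR)^{-3}\|g\|_{L^1}\|f\|_{L^1}\lesssim R^{-3}\|g\|_{L^1}\|f\|_{L^1},
\]
where the implicit constant is allowed to depend on the fixed parameter $d=\gamma_2-\gamma_1$, which completes the proof.

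There is essentially no obstacle here since all of the analytic content has been absorbed into \autoref{thm:zero-degree}; the only thing to be careful about is the observation that although $f$ and $g$ are localized only in the $\bar x=(x_1,x_2)$ variables (and are unrestricted in $x_3$), the distance between the two cylindrical regions in $\R^3$ is still controlled from below by the distance between their projections to the $\bar x$-plane, which is what gives the factor $dR$.
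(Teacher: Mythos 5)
Your proposal is correct and follows exactly the route the paper takes: verify that the symbol $\xi_i^4/|\xi|^4$ is smooth away from the origin and homogeneous of degree zero, then apply \autoref{thm:zero-degree} with the observation that the cylindrical supports are at distance at least $dR$. Your explicit check that the three-dimensional distance is controlled by the distance of the projections onto the $\bar x$-plane is a detail the paper leaves implicit, but the argument is the same.
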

\begin{corollary}\label{lemma:decay-r2-2}
Assume that $f,g\in L^1,$ and that $f$ is supported in $\{|\bar x|\geq \gamma_2R\}$ while $g$ is supported in $\{|\bar x|\leq \gamma_1R\},$ for some positive parameters $\gamma_1$ and $\gamma_2$ satisfying $d:=\gamma_2-\gamma_1>0.$ Then 
\begin{equation*}
|\langle \mathcal R^2_k\mathcal R_h^2 f,g\rangle|\lesssim R^{-3}\|g\|_{L^1}\|f\|_{L^1}.
\end{equation*}
\end{corollary}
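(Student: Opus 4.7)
The plan is to invoke Proposition \ref{thm:zero-degree} with $T=\mathcal R_k^2\mathcal R_h^2$, so the whole proof amounts to checking two things: that the hypotheses on the multiplier are met, and that the distance between the supports of $f$ and $g$ is bounded below by a multiple of $R$.

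First I would verify the hypotheses of Proposition \ref{thm:zero-degree}. The symbol associated to $\mathcal R_k^2\mathcal R_h^2$ is $m(\xi)=\xi_k^2\xi_h^2/|\xi|^4$, a rational function whose denominator vanishes only at the origin, hence smooth on $\R^3\setminus\{0\}$; numerator and denominator are both homogeneous of degree $4$, so $m(\lambda\xi)=m(\xi)$ for every $\lambda>0$, confirming that $m$ is homogeneous of degree zero. That the symbol of the composition is the product $\xi_k^2\xi_h^2/|\xi|^4$ follows from the elementary Fourier representation of $\mathcal R_j^2$ recalled in Remark \ref{rem:k-rie}.

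Next I would estimate the separation between the supports. Since $\operatorname{supp}(f)\subset\{|\bar x|\geq\gamma_2 R\}$ and $\operatorname{supp}(g)\subset\{|\bar x|\leq\gamma_1 R\}$, for any $x\in\operatorname{supp}(f)$ and $y\in\operatorname{supp}(g)$ the reverse triangle inequality in $\R^2$ yields
$$|x-y|\geq|\bar x-\bar y|\geq|\bar x|-|\bar y|\geq (\gamma_2-\gamma_1)R=dR,$$
so $\operatorname{dist}(\operatorname{supp}(f),\operatorname{supp}(g))\geq dR>0$; in particular the supports are disjoint. Plugging this lower bound into \eqref{eq:int-decay-general} would then produce
$$|\langle \mathcal R_k^2\mathcal R_h^2 f,g\rangle|\lesssim (dR)^{-3}\|g\|_{L^1}\|f\|_{L^1}\lesssim R^{-3}\|g\|_{L^1}\|f\|_{L^1},$$
where in the last step the fixed positive constant $d^{-3}$ is absorbed into the implicit constant.

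There is no real obstacle to overcome here: this corollary is essentially a direct specialization of Proposition \ref{thm:zero-degree} to an explicit smooth zero-homogeneous symbol and an explicit geometric configuration. All genuinely analytic difficulty is carried by Proposition \ref{thm:zero-degree}, which rests on the characterization of tempered distributions on $\R^3$ that are smooth and homogeneous of degree $-3$ off the origin.
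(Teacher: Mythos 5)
Your proposal is correct and follows exactly the paper's route: the paper also obtains this corollary by noting that $m(\xi)=\xi_k^2\xi_h^2/|\xi|^4$ is smooth away from the origin and homogeneous of degree zero, and then applying \autoref{thm:zero-degree}. Your explicit verification that $\operatorname{dist}(\operatorname{supp}(f),\operatorname{supp}(g))\geq dR$ is a detail the paper leaves implicit, and it is carried out correctly.
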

\begin{proof}[Proof of \autoref{thm:zero-degree}] By definition, $\mathcal F(Tf)(\xi)=m(\xi)\hat f(\xi),$ and being $m$ a homogeneous symbol of degree zero, smooth away from the origin, we can invoke \cite[Proposition 2.4.7]{Grafakos} and we can claim the existence of a smooth function $\Omega$ on the sphere $\mathbb S^2,$ and a scalar $c\in\C$ such that 
\[
\mathcal F^{-1}m=\frac{1}{|x|^3}\Omega\left(\frac{x}{|x|}\right)+c\delta(x),
\]  
where $\delta$ is the Dirac delta at the origin. We recall that being $m$ a symbol of degree zero, the associated distribution  is homogeneous of degree $-3.$ 
Hence, 
\[
\begin{aligned}
\langle Tf,g\rangle&= \iint \frac{1}{|x-y|^3}\Omega\left(\frac{x-y}{|x-y|}\right)f(y)g(x)\,dy\,dx+c\iint \delta(x-y)f(y)g(x)\,dy\,dx\\
&=\iint \frac{1}{|x-y|^3}\Omega\left(\frac{x-y}{|x-y|}\right)f(y)g(x)\,dy\,dx,
\end{aligned}
\]
where in the last identity we used the disjointness of the supports of $f$ and $g$. Therefore,
\[
\begin{aligned}
|\langle Tf,g\rangle|&\leq \left(\mbox{dist}(\mbox{supp}(f), \mbox{supp}(g))\right)^{-3}\|\Omega\|_{L^\infty(\mathbb S^2)}\|f\|_{L^1}\|g\|_{L^1}\\
&\lesssim \left(\mbox{dist}(\mbox{supp}(f), \mbox{supp}(g))\right)^{-3}\|f\|_{L^1}\|g\|_{L^1} ,
\end{aligned}
\]
and the proof is concluded.
\end{proof}

\subsection{Pointwise estimate for $\mathcal R^2_j$.}
We turn now the attention to the square of the Riesz transform. In the subsequent results, we will use a cut-off function $\chi$ satisfying the following: $\chi(x)$ is a localization function supported in the cylinder $\{|\bar x|\leq 1\}$  which is nonnegative and bounded, with $\|\chi\|_{L^\infty}\leq 1.$ For a positive parameter $\gamma,$ we define by $\chi_{\{|\bar x|\leq \gamma R\}}$ the rescaled function $\chi(x/\gamma R)$ (hence $\chi_{\{|\bar x|\leq \gamma R\}}$ is bounded, positive and supported in the cylinder of radius $\gamma R$). The proof of the next lemmas is inspired by \cite{LW}.

\begin{lemma}\label{lemma:in-out} For any (regular) function $f$ the following pointwise estimate is satisfied: provided $d:=\gamma_2-\gamma_1>0,$ where $\gamma_1$ and $\gamma_2$ are positive parameters, there exists a universal constant 
$C>0$ such that 
\begin{equation}\label{eq:in-out}
|\chi_{\{|\bar x|\leq \gamma_1R\}}(x)\mathcal R_j^2[(1-\chi_{\{|\bar x|\leq \gamma_2R\}})f](x)|\leq CR^{-3}\chi_{\{|\bar x|\leq \gamma_1R\}}(x)\|f\|_{L^1(|\bar x|\geq\gamma_2R)}.
\end{equation}
\end{lemma}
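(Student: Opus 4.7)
The plan is to invoke the explicit kernel representation of $\mathcal R_j^2$ --- exactly the one used in the proof of \autoref{thm:zero-degree} --- and then to exploit the cylindrical separation between the support of the truncated input and the region where we evaluate the output. Since the Fourier symbol $-\xi_j^2/|\xi|^2$ of $\mathcal R_j^2$ is smooth and homogeneous of degree zero on $\R^3\setminus\{0\}$, by \cite[Proposition 2.4.7]{Grafakos} there exist a constant $c_j\in\C$ and a smooth zero-mean function $\Omega_j$ on $\mathbb S^2$ such that
\[
\mathcal R_j^2 h(x) \,=\, c_j\, h(x) \,+\, \mathrm{p.v.}\!\int_{\R^3} \frac{\Omega_j\!\left(\frac{x-y}{|x-y|}\right)}{|x-y|^3}\, h(y)\, dy
\]
for every sufficiently regular $h$. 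This is the one ingredient I would borrow wholesale from the previous subsection.

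Next, I apply this representation to $h=(1-\chi_{\{|\bar x|\leq \gamma_2R\}})f$ and multiply the result by $\chi_{\{|\bar x|\leq \gamma_1R\}}(x)$. Two observations then drive the estimate. First, the prefactor localizes $x$ to $\{|\bar x|\leq \gamma_1 R\}$, while the cut-off $1-\chi_{\{|\bar x|\leq \gamma_2R\}}$ forces the $L^1$-mass of $h$ to live in $\{|\bar y|\geq \gamma_2 R\}$ (using the standard convention that $\chi$ is fixed to equal $1$ on an inner region broad enough that $\chi_{\{|\bar x|\leq \gamma_2R\}}\equiv 1$ on $\{|\bar x|\leq \gamma_1 R\}$, so the diagonal contribution $c_j(1-\chi_{\{|\bar x|\leq \gamma_2R\}}(x))f(x)$ vanishes at all $x$ where the prefactor is non-zero). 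Second, for any such pair $(x,y)$ the cylindrical geometry gives
\[
|x-y|\;\geq\;|\bar x-\bar y|\;\geq\;\gamma_2R-\gamma_1R\;=\;dR\;>\;0,
\]
so the principal value is unnecessary and the kernel admits the pointwise bound $\|\Omega_j\|_{L^\infty(\mathbb S^2)}/(dR)^3$. Pulling this constant out of the integral and using $|h(y)|\leq|f(y)|\mathbf 1_{\{|\bar y|\geq\gamma_2R\}}(y)$ yields
\[
\left|\chi_{\{|\bar x|\leq \gamma_1R\}}(x)\mathcal R_j^2 h(x)\right|\;\leq\; \chi_{\{|\bar x|\leq \gamma_1R\}}(x)\,\frac{\|\Omega_j\|_{L^\infty(\mathbb S^2)}}{d^{3}R^{3}}\,\|f\|_{L^1(|\bar x|\geq \gamma_2R)},
\]
which is exactly \eqref{eq:in-out} once $\|\Omega_j\|_{L^\infty}$ and $d^{-3}$ are absorbed into the universal constant $C$.

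The only delicate point --- indeed modest in this pointwise setting, in contrast with the $L^p$-theory of singular integrals --- is the treatment of the diagonal/$\delta$-contribution generated by the kernel representation: one must ensure it does not survive the multiplication by $\chi_{\{|\bar x|\leq \gamma_1R\}}$, which is why the cut-off $\chi$ is chosen to equal one on a sufficiently wide inner cylinder. Everything else is a pointwise analogue of the integral argument already carried out for \autoref{thm:zero-degree}, the decisive input being the separation-of-supports estimate $|x-y|\geq dR$.
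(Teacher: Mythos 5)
Your proof is correct, but it follows a genuinely different route from the paper's. The paper does not reuse the homogeneous-distribution representation here: it writes $\mathcal R_j^2$ as an iterated singular integral with kernel $\frac{x_j-y_j}{|x-y|^4}\frac{y_j-z_j}{|y-z|^4}$, reduces the claim to bounding the inner integral $I=\int\frac{y_j}{|y|^4}\frac{z_j-y_j}{|z-y|^4}\,dy$ by $R^{-3}$ uniformly for $|\bar z|\geq dR$, and does so via a three-region decomposition ($|\bar y|\leq \frac d4 R$, $|\bar z-\bar y|\leq\frac12|\bar z|$, and the complement), exploiting cancellation on the symmetric regions through a mean-value argument and Cauchy--Schwarz on the remainder. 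You instead apply \cite[Proposition 2.4.7]{Grafakos} --- the same ingredient the paper uses only for the integral estimate of \autoref{thm:zero-degree} --- directly to the symbol $-\xi_j^2/|\xi|^2$, so that once the supports are separated by $|x-y|\geq|\bar x-\bar y|\geq dR$ the kernel $\Omega_j\bigl(\tfrac{x-y}{|x-y|}\bigr)|x-y|^{-3}$ is bounded pointwise by $\|\Omega_j\|_{L^\infty(\mathbb S^2)}(dR)^{-3}$ and the conclusion is immediate. Your argument is considerably shorter, unifies \autoref{lemma:in-out} with \autoref{thm:zero-degree}, and in fact yields the pointwise bound for \emph{any} operator with a smooth degree-zero homogeneous symbol, whereas the paper's computation is tied to the specific iterated Riesz kernel (which is precisely the feature the authors say they wanted to exhibit). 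The one point you rightly flag is the identity/delta contribution $c_j h(x)$ and the inclusion $\operatorname{supp}\bigl((1-\chi_{\{|\bar x|\leq\gamma_2R\}})f\bigr)\subseteq\{|\bar x|\geq\gamma_2R\}$: both require $\chi_{\{|\bar x|\leq\gamma_2R\}}\equiv1$ on the inner cylinder, a convention the paper never states for its cut-off $\chi$ but uses implicitly (its own proof sets $g=\chi_{\{|\bar x|\geq\gamma_2R\}}f$ and the lemma is later applied with sharp indicator functions), so your assumption is consistent with the intended setting and introduces no gap.
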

\begin{remark} It is worth mentioning that  the use of the general \autoref{thm:zero-degree} yields an integral estimate for the operator $\mathcal R^2_j,$ which would suffices for our purposes later on in the paper, namely to close the convexity argument for the blow-up Theorem when the dipolar kernel in  \eqref{GP} is acting (i.e. $\lambda_2\neq0$). Nonetheless, we prefer to give the pointwise decay below as well, which is more refined than an integral estimate, and because it relies of the precise integral representation of the square of the Riesz transform, while \autoref{thm:zero-degree} holds true for any operator as in the hypothesis. 
\end{remark}
\begin{proof}[Proof of \autoref{lemma:in-out}] In the principal value sense, the square of the Riesz transform acts on a function $g$ as 
\[
\mathcal R_j^2g(x)=\iint \frac{x_j-y_j}{|x-y|^{3+1}}\frac{y_j-z_j}{|y-z|^{3+1}}g(z)\,dz\,dy.
\]
Let $g(x)=\chi_{\{|\bar x|\geq\gamma_2R\}}(x)f(x).$ Then
\[
\chi_{\{|\bar x|\leq \gamma_1R\}}(x)\mathcal R_j^2g(x)=\iint\left(\frac{y_j}{|y|^{4}}\frac{z_j-y_j}{|z-y|^{4}}\,dy\right)g(x-z)\,dz.
\] 
Since $g$ is supported in the exterior of a cylinder of radius $\gamma_2R,$ we can assume  $|\bar x-\bar z|\geq \gamma_2R,$ and for the function $\chi_{\{|\bar x|\leq \gamma_1R\}}$ is supported by definition in the cylinder of radius $\gamma_1R,$ we can assume  $|\bar x|\leq \gamma_1R:$ therefore we have that $|\bar z|\geq dR.$ This implies that $\{|\bar y|\leq \frac{d}{4}R\}\cap\{|\bar z-\bar y|\leq\frac12|\bar z|\}=\emptyset.$ Indeed, 
\begin{equation}\label{eq:y}
\frac12|\bar z|\geq|\bar z-\bar y|\geq |\bar z|-|\bar y| \implies |\bar y|\geq\frac12|\bar z|\geq \frac d2R,
\end{equation}
hence we have the following splitting:
\begin{equation}\label{eq:int:I}
\begin{aligned}
I=\int\frac{y_j}{|y|^{4}}\frac{z_1-y_1}{|z-y|^{4}}\,dy&=\int_{|\bar y|\leq\frac d4 R}\frac{y_j}{|y|^{4}}\frac{z_j-y_j}{|z-y|^{4}}\,dy\\
&+\int_{|\bar z-\bar y|\leq\frac12|\bar z|}\frac{y_j}{|y|^{4}}\frac{z_j-y_j}{|z-y|^{4}}\,dy\\
&+\int_{\{|\bar y|\geq \frac d4 R\}\cap\{|\bar z-\bar y|\geq \frac12|\bar z|\}}\frac{y_j}{|y|^{4}}\frac{z_j-y_j}{|z-y|^{4}}\,dy\\
&=\mathcal{I}+\mathcal{II}+\mathcal{III}.
\end{aligned}
\end{equation}
\noindent \textit{Estimate for the term $\mathcal{I}$.} Let us focus on the first integral $\mathcal{I}.$ The domain of integration of this integral is the cylinder of radius $R$ parallel to the $y_3$ axis.  Therefore
\[
\mathcal{I}=\mathcal{I}_1+\mathcal{I}_2=\int_{|y_3|\leq \frac d4 R}\int_{|\bar y|\leq \frac d4 R}\frac{y_j}{|y|^{4}}\frac{z_j-y_j}{|z-y|^{4}}\,d\bar y\,dy_3+\int_{|y_3|\geq \frac d4 R}\int_{|\bar y|\leq \frac d4 R}\frac{y_j}{|y|^{4}}\frac{z_j-y_j}{|z-y|^{4}}\,d\bar y\,dy_3.
\]
For the term $\mathcal{I}_1$ we first notice that 
\[
\int_{|y_3|\leq \frac d4 R}\int_{|\bar y|\leq\frac d4 R}\frac{y_j}{|y|^{4}}\,d\bar y\,dy_3=0
\]
since the domain is invariant under the change of variables $y\mapsto-y.$ Therefore 
\[
A_1=\int_{|y_3|\leq \frac d4 R}\int_{|\bar y|\leq \frac d4 R}\frac{y_j}{|y|^{4}}\left(\frac{z_j-y_j}{|z-y|^{4}}-\frac{z_j}{|z|}\right)\,d\bar y\,dy_3
\]
and we write 
\[
\frac{z_j-y_j}{|z-y|^{4}}-\frac{z_j}{|z|}=\int_0^1 h^\prime(s)\,ds
\]
where 
\[h(s)=\frac{z_j-sy_j}{|z-sy|^{4}}, \qquad s\in[0,1].
\] 
We have, by a straightforward calculation, that  
\[
h^\prime(s)=-\frac{y_j}{|z-sy|^{4}}+4\frac{z_j-sy_j}{|z-sy|^{4}}(z-sy)\cdot y
\]
and 
\[|h^\prime(s)|\lesssim \frac{|y|}{|z-sy|^{4}}.
\]
Hence, by observing that $|z-sy|\geq |\bar z-s\bar y|\geq |\bar z|-s|\bar y|\geq \frac{3d}{4} R$ as $s|\bar y|\leq \frac d4 R,$ we get that 
\[
\max_{s\in[0,1]}|h^\prime(s)|\lesssim R^{-4}|y|,  
\]
then 
\begin{equation}\label{est:A1}
\begin{aligned}
\mathcal{I}_1&=\int_{|y_3|\leq \frac d4 R}\int_{|\bar y|\leq \frac d4 R}\frac{y_j}{|y|^{4}}\left(\int_0^1 h^\prime(s)\,ds\right)\,d\bar y\,dy_3\lesssim R^{-4}\int_{|y_3|\leq \frac d4 R}\int_{|\bar y|\leq \frac d4 R}\frac{1}{|y|^{2}}\,d\bar y\,dy_3\\
&\lesssim R^{-4}\int_{|y|\leq\frac{ \sqrt2d}{4} R}\frac{1}{|y|^{2}}\,dy \lesssim R^{-3}.
\end{aligned}
\end{equation}

The term $\mathcal{I}_2$ can be estimated as follows:
\begin{equation*}
\begin{aligned}
\mathcal{I}_2&=\int_{|y_3|\geq \frac d4 R}\int_{|\bar y|\leq \frac d4 R}\frac{y_j}{|y|^{4}}\frac{z_j-y_j}{|z-y|^{4}}\,d\bar y\,dy_3\leq\int_{|y_3|\geq \frac d4 R}\int_{|\bar y|\leq \frac d4 R}\frac{1}{|y|^3}\frac{1}{|z-y|^3}\,d\bar y\,dy_3\\
&\leq \int_{|y_3|\geq \frac d4 R}\frac{1}{|y_3|^3}\int_{|\bar y|\leq \frac d4 R}\frac{1}{|\bar z-\bar y|^3}\,d\bar y\,dy_3\lesssim\int_{|y_3|\geq \frac d4 R}\frac{1}{|y_3|^3}\,dy_3\left( \frac{1}{R^3}\int_{|\bar y|\leq \frac d4 R}\,d\bar y\right)
\end{aligned}
\end{equation*}
where we used again the fact that if $|\bar y|\leq \frac d4 R$ then $|\bar z-\bar y|\geq \frac{3d}{4}R,$ hence we conclude with 
\begin{equation}\label{est:A2}
\mathcal{I}_2\lesssim R^{-2}R^{-1}=R^{-3}
\end{equation}
In conclusion, by summing up the two estimates \eqref{est:A1} and \eqref{est:A2}we get 
\begin{equation}\label{est:A}
\mathcal{I}\lesssim R^{-3}.
\end{equation}
\\
\noindent \textit{Estimate for the term $\mathcal{II}.$} We adopt a similar approach for the term $\mathcal{II}$ that we split in two further terms: 
\[
\begin{aligned}
\mathcal{II}&=\int_{|z_3-y_3|\leq |\bar z|}\int_{|\bar z-\bar y|\leq\frac12|\bar z|}\frac{y_j}{|y|^{4}}\frac{z_j-y_j}{|z-y|^{4}}\,dy+\int_{|z_3-y_3|\geq |\bar z|}\int_{|\bar z-\bar y|\leq\frac12|\bar z|}\frac{y_j}{|y|^{4}}\frac{z_j-y_j}{|z-y|^{4}}\,dy\\
&=\mathcal{II}_1+\mathcal{II}_2
\end{aligned}
\]  
We first notice that 
\[
\int_{|z_3-y_3|\leq |\bar z|}\int_{|\bar z-\bar y|\leq\frac12|\bar z|}\frac{z_j-y_j}{|z-y|^{4}}\,d\bar y\,dy_3=0
\]
since the domain is invariant under the change of variable $y\mapsto2z-y.$ \\
\noindent \textit{Estimate for the term $\mathcal{II}_1.$} Therefore 
\[
\begin{aligned}
\mathcal{II}_1&=\int_{|z_3-y_3|\leq |\bar z|}\int_{|\bar z-\bar y|\leq\frac12|\bar z|}\frac{z_j-y_j}{|z-y|^{4}}\left(
\frac{y_j}{|y|^{4}}-\frac{z_j}{|z|^{4}}\right)\,d\bar y\,dy_3\\
&=\int_{|z_3-y_3|\leq |\bar z|}\int_{|\bar z-\bar y|\leq\frac12|\bar z|}\frac{z_j-y_j}{|z-y|^{4}}\left(\int_0^1h^\prime(s)\,ds\right)\,d\bar y\,dy_3
\end{aligned}
\]
where 
\[
h(s)=\frac{sy_j+(1-s)z_j}{|sy_j+(1-s)z_j|^{4}},\qquad s\in[0,1].
\] 
We compute 
\[
h^\prime(s)=\frac{y_j-z_j}{|sy+(1-s)z|^{4}}+4\frac{sy_j+(1-s)z_j}{|sy+(1-s)z|^{5}}\frac{sy+(1-s)z}{|sy+(1-s)z|}\cdot(y-z)
\]
and hence 
\[
|h^\prime(s)|\lesssim \frac{|y-z|}{|sy+(1-s)z)|^{4}}.
\]
Now we observe, as $s|\bar y-\bar z|\leq\frac12|\bar z|,$ that $
|sy+(1-s)z|=|s(y-z)+z|\geq|s(\bar y-\bar z)+\bar z|\geq|\bar z|-s|\bar y-\bar z|\geq \frac12|\bar z| $
and then 
\[
\max_{s\in[0,1]}|h^\prime(s)|\lesssim \frac{|y-z|}{|\bar z|^{4}}
\]
which allows us to continue the estimate for $\mathcal{II}_1$ as follows: 
\begin{equation}\label{est:B1}
\begin{aligned}
\mathcal{II}_1&\lesssim \frac{1}{|\bar z|^{4}} \int_{|z_3-y_3|\leq |\bar z|}\int_{|\bar z-\bar y|\leq\frac12|\bar z|}\frac{1}{|z-y|^{2}}\,d\bar y\,dy_3\\
&\lesssim \frac{1}{|\bar z|^{4}} \int_{|z-y|\lesssim |\bar z|}\frac{1}{|z-y|^{2}}\,dy\lesssim |\bar z|^{-3}\lesssim R^{-3}.
\end{aligned}
\end{equation}

\noindent\textit{Estimate for the term $\mathcal{II}_2.$}  It remains to prove a suitable estimate for the remaining term $\mathcal{II}_2.$ 
We use \eqref{eq:y} and we estimate 
\begin{equation}\label{est:B2}
\begin{aligned}
\mathcal{II}_2&=\int_{|z_3-y_3|> |\bar z|}\int_{|\bar z-\bar y|\leq\frac12|\bar z|}\frac{y_j}{|y|^{4}}\frac{z_j-y_j}{|z-y|^{4}}\,d\bar y\,dy_3\\
&\leq\int_{|z_3-y_3|> |\bar z|}\int_{|\bar z-\bar y|\leq\frac12|\bar z|}\frac{1}{|y|^3}\frac{1}{|z-y|^3}\,d\bar y\,dy_3\\
&\leq\int_{|z_3-y_3|> |\bar z|}\int_{|\bar y|\geq\frac12|\bar z|}\frac{1}{|y|^3}\frac{1}{|z-y|^3}\,d\bar y\,dy_3\\
&\leq\int_{|z_3-y_3|> |\bar z|}\frac{1}{|z_3-y_3|^3}\,dy_3\int_{|\bar y|\geq\frac12|\bar z|}\frac{1}{|\bar y|^3}\,d\bar y\leq|\bar z|^{-2}|\bar z|^{-1}\lesssim R^{-3}.
\end{aligned}
\end{equation}
We conclude, by summing up \eqref{est:B1} and \eqref{est:B2}, that 
\begin{equation}\label{est:B}
\mathcal{II}\lesssim R^{-3}.
\end{equation}

\noindent \textit{Estimate for the term $\mathcal{III}.$} It is left to estimate the integral 
\[
\mathcal{III}=\int_{\{|\bar y|\geq \frac d4 R\}\cap\{|\bar z-\bar y|\geq \frac12|\bar z|\}}\frac{y_j}{|y|^{4}}\frac{z_j-y_j}{|z-y|^{4}}\,dy.
\]
By the Cauchy-Schwarz's inequality
\begin{equation}\label{est:C}
\begin{aligned}
\mathcal{III}&=\int_{\{|\bar y|\geq \frac d4 R\}\cap\{|\bar z-\bar y|\geq \frac12|\bar z|\}}\frac{y_j}{|y|^{4}}\frac{z_j-y_j}{|z-y|^{4}}\,dy\leq\int_{\{|\bar y|\geq \frac d4 R\}\cap\{|\bar z-\bar y|\geq \frac12|\bar z|\}}\frac{1}{|y|^3}\frac{1}{|z-y|^3}\,dy\\
&\leq\left(\int_{\{|\bar y|\geq \frac d4 R\}\cap\{|\bar z-\bar y|\geq \frac12|\bar z|\}}\frac{1}{|y|^{6}}\,dy\right)^{1/2}\left(\int_{\{|\bar y|\geq \frac d4 R\}\cap\{|\bar z-\bar y|\geq \frac12|\bar z|\}}\frac{1}{|z-y|^{6}}\,dy\right)^{1/2}\\
&\leq\left(\int_{\{|y|\geq \frac d4 R\}}\frac{1}{|y|^{6}}\,dy\right)^{1/2}\left(\int_{\{| z- y|\geq \frac12|\bar z|\}}\frac{1}{|z-y|^{6}}\,dy\right)^{1/2}\\
&\lesssim R^{-3/2}|\bar z|^{-3/2}\lesssim R^{-3}.
\end{aligned}
\end{equation}
The proof of the lemma is therefore concluded by observing that the integral $I$ defined in \eqref{eq:int:I} can be bounded, by using \eqref{est:A}, \eqref{est:B} and \eqref{est:C}, by
\[
I:=\mathcal{I}+\mathcal{II}+\mathcal{III}\lesssim R^{-3},
\]
and hence
\[
\begin{aligned}
|\chi_{\{|\bar x|\leq \gamma_1R\}}(x)\mathcal R_j^2g(x)|&=\chi_{\{|\bar x|\leq \gamma_1R\}}(x)\left|\iint\left(\frac{y_j}{|y|^{4}}\frac{z_j-y_j}{|z-y|^{4}}\,dy\right)g(x-z)\,dz\right|\\
&\lesssim R^{-3}\chi_{\{|\bar x|\leq \gamma_1R\}}(x)\int|g(x-z)|\,dz\\
&\lesssim R^{-3}\chi_{\{|\bar x|\leq \gamma_1R\}}(x)\|f\|_{L^1(|\bar x|\geq\gamma_2R)}
\end{aligned}
\] 
which is the estimate stated in \eqref{eq:in-out}.
\end{proof}

We have an estimate similar to \eqref{eq:in-out} if we localize inside a cylinder the function on which $R^2_j$ acts, and we then truncate everything with a function supported in the exterior of  another cylinder. 
\begin{lemma}\label{lemma:out-in} For any (regular) function $f$ the following pointwise estimate is satisfied: provided $d:=\gamma_1-\gamma_2>0,$ where $\gamma_1$ and $\gamma_2$ are positive parameters, there exists a  universal constant $C>0$ such that 
\begin{equation}\label{eq:out-in}
|(1-\chi_{|\bar x|\leq \gamma_1R})(x)\mathcal R_j^2[(\chi_{\{|\bar x|\leq \gamma_2R\}})f](x)|\leq CR^{-3}|(1-\chi_{\{|\bar x|\leq \gamma_1R\}})(x)|\|f\|_{L^1(|\bar x|\leq \gamma_2R)}.
\end{equation}
\end{lemma}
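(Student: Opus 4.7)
The plan is to reuse the proof of \autoref{lemma:in-out} almost verbatim, once one checks that the geometric setup is symmetric under swapping ``inside'' and ``outside''. Setting $g(x) := \chi_{\{|\bar x|\leq \gamma_2 R\}}(x)\, f(x)$ and performing the two changes of variables $y\mapsto x-y$ followed by $z\mapsto x-z$ in the iterated principal-value integral defining $\mathcal R_j^2 g$, I arrive at the representation
\[
\mathcal R_j^2 g(x) = \iint \left( \frac{y_j}{|y|^{4}} \frac{z_j - y_j}{|z-y|^{4}}\, dy \right) g(x-z)\,dz,
\]
which is precisely the formula exploited in \autoref{lemma:in-out}.

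The pivotal observation is that the geometric condition $|\bar z|\geq dR$, which powered the estimate of the inner $y$-integral in \autoref{lemma:in-out}, is still available in the present configuration. Indeed, on the support of the prefactor $(1-\chi_{\{|\bar x|\leq \gamma_1 R\}})(x)$ we have $|\bar x|\geq \gamma_1 R$, while nontriviality of $g(x-z)$ forces $|\bar x-\bar z|\leq \gamma_2 R$ (since $g$ is now concentrated inside the cylinder of radius $\gamma_2R$). The triangle inequality then gives
\[
|\bar z| \geq |\bar x|-|\bar x-\bar z| \geq \gamma_1 R - \gamma_2 R = dR,
\]
which is exactly the hypothesis on $\bar z$ used in \autoref{lemma:in-out}.

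Since the inner integral $I(z)=\int \frac{y_j}{|y|^4}\frac{z_j-y_j}{|z-y|^4}\,dy$ depends on $z$ alone, and since its splitting $I=\mathcal I+\mathcal{II}+\mathcal{III}$, together with the bounds \eqref{est:A}, \eqref{est:B} and \eqref{est:C}, rests solely on the constraint $|\bar z|\geq dR$, one obtains $|I(z)|\lesssim R^{-3}$ without any modification. Plugging this into the representation above and bounding $|g(x-z)|\leq |f(x-z)|\mathbf 1_{\{|\overline{x-z}|\leq \gamma_2 R\}}$ yields
\[
\bigl|(1-\chi_{\{|\bar x|\leq \gamma_1 R\}})(x)\,\mathcal R_j^2 g(x)\bigr| \lesssim R^{-3}\bigl|(1-\chi_{\{|\bar x|\leq \gamma_1 R\}})(x)\bigr|\,\|f\|_{L^1(|\bar x|\leq \gamma_2 R)},
\]
which is exactly \eqref{eq:out-in}.

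I do not expect any serious obstacle here beyond bookkeeping on supports: the delicate Taylor-cancellation arguments for $\mathcal I_1$ and $\mathcal{II}_1$ over symmetric domains, which were the technically subtle steps of \autoref{lemma:in-out}, transfer word-for-word. As a safety check, if one only cared about an integral (rather than pointwise) version of \eqref{eq:out-in}, one could instead appeal directly to \autoref{thm:zero-degree} with the pair of disjointly supported functions $(\chi_{\{|\bar x|\leq \gamma_2 R\}}f,\, \mathbf 1_{\{|\bar x|\geq \gamma_1 R\}}h)$; their cylindrical supports are separated by $dR$, so \eqref{eq:int-decay-general} yields the correct $R^{-3}$ decay, corroborating the pointwise bound obtained above.
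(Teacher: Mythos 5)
Your proof is correct and takes essentially the same approach the paper intends: the paper's own proof of this lemma is just the one-line remark that it is analogous to \autoref{lemma:in-out}, and you have correctly identified that the only point needing verification is that the swapped support configuration still forces $|\bar z|\geq dR$ via the triangle inequality, after which the estimate $|I(z)|\lesssim R^{-3}$ of the inner integral, which depends on $z$ only through that constraint, carries over verbatim.
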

\begin{proof}
The proof is analogous to the one for \autoref{lemma:in-out}, so we skip the details. 
\end{proof}


\section{Variational structure of the energy functional and consequences}\label{sec:3}
We pass now to the discussion on the variational structure of the energy functional and its relation to the existence of standing waves for \eqref{GP}. The following arguments are valid in the same fashion for the NLS equation \eqref{NLS}, when the parameter $\lambda_2=0,$  even if it is worth mentioning that for NLS the existence of standing waves is nowadays classical. We recall the two different approaches to prove existence of Ground States for the GPE.  

The first strategy is due to Antonelli and Sparber, see \cite{AS}, where existence is proved  by means of minimization of the Weinstein functional
\begin{equation*}
J(v):=\frac{\|\nabla v\|^3_{L^2}\|v\|_{L^2}}{-\lambda_1\|v\|_{L^4}^4-\lambda_2\int (K\ast |v|^2)|v|^2\,dx}.
\end{equation*}
The alternative way, see the work of Jeanjean and  the first author \cite{BJ},  is based on topological methods, where the existence of Ground States is shown by means of the existence of critical points of the energy functional under the mass constraint \eqref{constraint1}. In the latter approach the parameter $\kappa$ which appears in \eqref{eq:maina} is found as Lagrange multiplier. 
Even if the energy functional is unbounded from below on $S(1)$, when restricting to states which are stationary for the evolution equation, i.e. they satisfy \eqref{eq:maina},  then the energy is bounded from below by a  positive constant. The latter constant, which corresponds to the mountain pass level, is reached. 
The mountain pass solutions therefore correspond  to the least energy states  (which are called Ground States, precisely). 
We pass now to the analysis of the geometry of the functional $E(u)$ on $S(c),$ and to this aim we introduce the $L^2$-preserving  scaling:
\begin{equation*}
u^\mu(x)=\mu^{3/2}u(\mu x), \quad \mu>0.
\end{equation*}
We report the next crucial lemma from \cite{BJ}. We recall the definition of $V(c)$ given in \eqref{def:vc}:
\[
V(c)=\{u\in H^1 \quad s.t. \quad \|u\|_{L^2}^2=c  \hbox{ and } G(u)=0\}.
\]
\begin{lemma}\label{lem:growth}{\cite[Lemma 3.3]{BJ}}
Suppose that  $u$ belongs to the manifold $S(c)$ and moreover that it satisfies $ \int (\lambda_1+\lambda_2 \hat K(\xi))(\widehat{ |u|^2})^2\,d\xi <0.$ Then the following properties hold true: 
\begin{itemize}
\item there exists a unique $\tilde\mu(u)>0$, such that $u^{\tilde\mu} \in V(c)$;
\item the map $\mu \mapsto E(u^{\mu})$ is concave on $[\tilde\mu, \infty)$;
\item $\tilde\mu(u)<1$ if and only if $G(u)<0$;
\item $\tilde\mu(u)=1$ if and only if $G(u)=0$;
\item the functional $G$ satisfies 
\begin{equation*}
G(u^\mu)
\begin{cases}
 >0,\quad \forall\, \mu \in (0,\tilde\mu(u))\\
 <0, \quad \forall\, \mu\in (\tilde\mu(u),+\infty)
\end{cases};
\end{equation*}
\item $E(u^{\mu})<E(u^{\tilde\mu})$, for any $\mu>0$ and $\mu \neq \tilde\mu$;
\item $\frac{d}{d \mu} E(u^{\mu})=\frac{1}{\mu}G(u^{\mu})$, $\forall \mu>0$.
\end{itemize}
\end{lemma}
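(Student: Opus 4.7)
The plan is to reduce all seven bullet points to elementary analysis of a single cubic polynomial in the real variable $\mu$. The first step is to compute how the kinetic and potential energies scale under the mass-preserving dilation $u \mapsto u^\mu$. A change of variables $y = \mu x$ immediately gives $T(u^\mu) = \mu^2 T(u)$. For the potential part, I would use the Plancherel expression \eqref{potential:en}: from $|u^\mu|^2(x) = \mu^3 |u(\mu x)|^2$ one gets $\widehat{|u^\mu|^2}(\xi) = \widehat{|u|^2}(\xi/\mu)$; substituting $\eta = \xi/\mu$ and invoking the zeroth-degree homogeneity $\hat K(\mu\eta) = \hat K(\eta)$, which is visible from \eqref{kernel:fou}, yields $P(u^\mu) = \mu^3 P(u)$. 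Consequently
\[
e(\mu) := E(u^\mu) = \tfrac{1}{2}\mu^2 T(u) + \tfrac{1}{2}\mu^3 P(u), \qquad G(u^\mu) = \mu^2 T(u) + \tfrac{3}{2}\mu^3 P(u),
\]
and a direct differentiation establishes the last bullet, $e'(\mu) = \tfrac{1}{\mu} G(u^\mu)$.

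Under the hypotheses $T(u) > 0$ and $P(u) < 0$ every remaining item becomes a sign-tracking exercise for this cubic. Factoring $G(u^\mu) = \mu^2\bigl(T(u) + \tfrac{3\mu}{2} P(u)\bigr)$, the second factor is a strictly decreasing affine function of $\mu$ vanishing precisely at the unique positive value
\[
\tilde\mu(u) = -\frac{2T(u)}{3P(u)} > 0,
\]
which yields both the existence and uniqueness of the projection onto $V(c)$ and the sign table $G(u^\mu) > 0$ on $(0, \tilde\mu)$, $G(u^\mu) < 0$ on $(\tilde\mu, \infty)$. Comparing $\tilde\mu$ with $1$ (and flipping inequalities when multiplying through by $P(u) < 0$) gives $\tilde\mu < 1 \iff 2T(u) + 3P(u) < 0 \iff G(u) < 0$, and identically $\tilde\mu = 1 \iff G(u) = 0$. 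The strict maximum statement then follows from $e'(\mu) = \tfrac{1}{\mu} G(u^\mu)$: the already-known sign of $G(u^\mu)$ shows that $e$ is strictly increasing on $(0,\tilde\mu)$ and strictly decreasing on $(\tilde\mu,\infty)$, hence $\tilde\mu$ is its unique global maximizer.

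For the concavity on $[\tilde\mu,\infty)$ I would differentiate once more: $e''(\mu) = T(u) + 3\mu P(u)$ is affine and decreasing in $\mu$, vanishing at $\tfrac{1}{2}\tilde\mu < \tilde\mu$, so $e''(\mu) \le 0$ on $[\tilde\mu,\infty)$. I do not foresee any real obstacle: once the scaling of $P$ is in hand the whole lemma reduces to elementary calculus on a polynomial whose leading coefficient has a known sign. The only point deserving slight care is the computation $P(u^\mu) = \mu^3 P(u)$, which rests on the scale invariance of $\hat K$ — the same mechanism that underlies the identity $\xi \cdot \nabla_\xi \hat K = 0$ invoked elsewhere in the paper.
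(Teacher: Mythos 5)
Your proof is correct and complete: the scaling computations $T(u^\mu)=\mu^2T(u)$ and $P(u^\mu)=\mu^3P(u)$ (the latter resting on the degree-zero homogeneity of $\hat K$) are right, and all seven bullets then follow from elementary sign analysis of the resulting cubic, exactly as you carry out. The paper itself does not prove this lemma but imports it from \cite{BJ}*{Lemma 3.3}; your argument is the standard fibering-map computation underlying that result, with the only (trivially filled) implicit step being that $u\in S(c)$ forces $T(u)>0$.
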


With \autoref{lem:growth} at hand, we can prove the next proposition which basically shows the dichotomy between the scattering  and  blow-up for \eqref{GP} in terms of the quantities $\gamma(\|u_0\|_{L^2}^2)$ and $G(u_0).$
\begin{prop}\label{prop:coinc}
Suppose that the  initial datum $u_0$ satisfies $E(u_0)<\gamma(\|u_0\|_{L^2}^2)$ and $G(u_0)>0,$ then
\begin{equation}\label{HR3}
M(u_0)E(u_0)<M(Q)E(Q)
\end{equation}
and 
\begin{equation}\label{HR4}
\|u_0\|_{L^2}\|\nabla u_0\|_{L^2}<\| Q\|_{L^2}\|\nabla Q\|_{L^2}.
\end{equation}

\noindent Conversely, if the conditions expressed in \eqref{HR3} and \eqref{HR4} hold true, then the initial datum $u_0$ satisfies  $E(u_0)<\gamma(\|u_0\|_{L^2}^2)$ and $G(u_0)>0.$
\end{prop}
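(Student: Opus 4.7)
My plan is to reduce the equivalence to two ingredients: a scaling formula for the mountain-pass level $\gamma(c)$, and the Pohozaev-type identity on the manifold $V(c)$.

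First I would establish the scaling identity $\gamma(c)=M(Q)E(Q)/c$, where $Q$ denotes the Ground State (normalized so that $M(Q)=1$ and $\gamma(1)=E(Q)$). The critical rescaling $u_\lambda(x)=\lambda u(\lambda x)$ satisfies
\begin{equation*}
M(u_\lambda)=\lambda^{-1}M(u),\qquad T(u_\lambda)=\lambda T(u),\qquad P(u_\lambda)=\lambda P(u),
\end{equation*}
the last identity using that the Fourier symbol $\hat K$ is homogeneous of degree zero; consequently $G(u_\lambda)=\lambda G(u)$ and $E(u_\lambda)=\lambda E(u)$. Taking $\lambda=c$ produces a bijection $V(c)\to V(1)$ under which $E$ is multiplied by $c$, giving $\gamma(c)=c^{-1}\gamma(1)=M(Q)E(Q)/c$. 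I would also record that on $V(c)$ the identity $E-\tfrac13 G=\tfrac16 T$ collapses to $E(v)=\tfrac16\|\nabla v\|_{L^2}^2$; in particular $E(Q)=\tfrac16\|\nabla Q\|_{L^2}^2$.

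For the forward direction, the hypothesis $E(u_0)<\gamma(\|u_0\|_{L^2}^2)$ multiplied by $M(u_0)$ is exactly \eqref{HR3} by the scaling formula. For \eqref{HR4}, the assumption $G(u_0)>0$ rewrites as $P(u_0)>-\tfrac23 T(u_0)$, hence
\begin{equation*}
E(u_0)=\tfrac12 T(u_0)+\tfrac12 P(u_0)>\tfrac16 T(u_0);
\end{equation*}
multiplying by $M(u_0)$ and using the Pohozaev identity at $Q$ then yields
\begin{equation*}
\|u_0\|_{L^2}^2\|\nabla u_0\|_{L^2}^2<6M(u_0)E(u_0)<6M(Q)E(Q)=\|Q\|_{L^2}^2\|\nabla Q\|_{L^2}^2,
\end{equation*}
which is \eqref{HR4}. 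For the converse, \eqref{HR3} combined with the scaling formula immediately gives $E(u_0)<\gamma(\|u_0\|_{L^2}^2)$. To deduce $G(u_0)>0$, I would split according to the sign of $P(u_0)$. When $P(u_0)\ge 0$ there is nothing to prove since $G(u_0)\ge T(u_0)>0$. When $P(u_0)<0$, I would invoke the sharp Gagliardo--Nirenberg-type inequality coming from the variational characterization of $Q$ as a minimizer of the Weinstein functional $J$, namely
\begin{equation*}
-P(u)\le C_{\mathrm{opt}}\|u\|_{L^2}\|\nabla u\|_{L^2}^3,\qquad C_{\mathrm{opt}}=\frac{2}{3\|Q\|_{L^2}\|\nabla Q\|_{L^2}},
\end{equation*}
with $C_{\mathrm{opt}}$ obtained by evaluating $J$ at $Q$ and using $-P(Q)=\tfrac23\|\nabla Q\|_{L^2}^2$ (which comes from $G(Q)=0$). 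Substituting into $G(u_0)=T(u_0)-\tfrac32|P(u_0)|$ produces
\begin{equation*}
G(u_0)\ge\|\nabla u_0\|_{L^2}^2\left(1-\frac{\|u_0\|_{L^2}\|\nabla u_0\|_{L^2}}{\|Q\|_{L^2}\|\nabla Q\|_{L^2}}\right),
\end{equation*}
and \eqref{HR4} makes the bracket strictly positive.

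The main subtlety is the scaling formula $\gamma(c)=M(Q)E(Q)/c$: it relies on the degree-zero homogeneity of $\hat K$ so that $V(c)$ is preserved (up to a bijection) under $u\mapsto u_\lambda$. The sharp constant $C_{\mathrm{opt}}$ is standard once the variational structure of $Q$ is available, as guaranteed by \cite{AS,BJ}; everything else is algebraic bookkeeping based on the identity $E-\tfrac13 G=\tfrac16 T$.
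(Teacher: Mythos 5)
Your argument is correct, and it diverges from the paper's in one substantive place. The forward direction is essentially the paper's: both reduce \eqref{HR3} to the scaling law for $\gamma(c)$ and both get \eqref{HR4} from $E(u_0)>E(u_0)-\tfrac13G(u_0)=\tfrac16T(u_0)$ together with $E(Q_\mu)=\tfrac16\|\nabla Q_\mu\|_{L^2}^2$. Your derivation of the scaling law via the bijection $V(c)\to V(1)$ induced by $u\mapsto\lambda u(\lambda\cdot)$ is actually cleaner than the paper's route, which rescales the Ground State itself and then has to justify (this is the content of \autoref{remark:JFA} and the accompanying discussion) that $Q_\mu$ is still a mountain-pass solution with $E(Q_\mu)=\gamma(\|Q_\mu\|_{L^2}^2)$; your argument needs only that $E$, $G$, $M$ transform homogeneously under the critical scaling, which in turn uses the degree-zero homogeneity of $\hat K$ exactly as you flag. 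The genuine difference is in the converse implication $\eqref{HR4}\Rightarrow G(u_0)>0$: the paper argues by contradiction through \autoref{lem:growth}, rescaling $u_0$ along the $L^2$-preserving fiber to a point of $V(\|u_0\|_{L^2}^2)$ with $\tilde\mu\le1$ and contradicting the minimality of the Ground State energy, whereas you use the sharp Gagliardo--Nirenberg-type bound $-P(u)\le\tfrac{2}{3}\|u\|_{L^2}\|\nabla u\|_{L^2}^3/(\|Q\|_{L^2}\|\nabla Q\|_{L^2})$ coming from the minimality of the Weinstein functional, after splitting on the sign of $P(u_0)$ (a case distinction the paper does not need but which is essential for you, since $J$ is only defined where $P<0$). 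Your route is more quantitative and closer in spirit to Holmer--Roudenko; it avoids \autoref{lem:growth} entirely but requires the explicit optimal constant, which you correctly extract from $G(Q)=0$, i.e. $-P(Q)=\tfrac23\|\nabla Q\|_{L^2}^2$. Both approaches rest on the same external inputs from \cite{AS,BJ}, namely that the Weinstein minimizer satisfies $G(Q)=0$ and attains the mountain-pass level.
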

\begin{remark}\label{remark:JFA}
We point out that we gave the proof of the first  implication in our previous work \cite{BF19}, but we repeat it below as in that paper some steps were not rigorously justified (it is worth mentioning that the claim was however correct, and the validity of the result was not affected by that carelessness). 
\end{remark}
\begin{proof}
We start with the first implication. From the definition of the quantities in \eqref{energy}, \eqref{kinetic:en} and   \eqref{potential:en}, we straightforwardly obtain the identity 
\begin{equation}\label{eq:id-egt} 
E(u_0)-\frac 13 G(u_0)=\frac 16 T(u_0).
\end{equation}

\noindent Due to the scaling invariance properties of the Weinstein functional, we note that
$Q_{\mu}:=\mu Q(\mu x)$ is again a minimizer for the Weinstein functional with
\begin{equation*}
\begin{aligned}
\|Q_{\mu}\|_{L^2}^2&=\mu^{-1}\|Q\|_{L^2}^2, \\
\|\nabla Q_{\mu}\|_{L^2}^2&=\mu\|\nabla Q\|_{L^2}^2.
\end{aligned}
\end{equation*}
We notice that $Q(x)e^{i t}$ is a standing wave solution to the evolution equation and by the symmetry
of the equation it is well known that $Q_{\mu}e^{i \mu^2t}=\mu Q(\mu x)e^{i \mu^2t}$ is another   standing wave solution to
\begin{equation*}
-\frac12\Delta Q_{\mu}+\left(\lambda_1|Q_{\mu}|^2Q_{\mu}+\lambda_2(K\ast|Q_{\mu}|^2)Q_{\mu}\right)+\mu^2Q_{\mu}=0,
\end{equation*}
that necessarily satisfies
$G(Q_{\mu})=0$. Hence 
$E(Q_{\mu})=\frac 16  \|\nabla Q_{\mu}\|_{L^2}^2$. 

\noindent  Provided we choose the parameter $\mu$ such that $\|Q_{\mu}\|_{L^2}^2=\|u_0\|_{L^2}^2,$ i.e. $Q_\mu$ belongs to the constraint $S(c)$, $c=\|u_0\|_{L^2}^2,$ we get (using the hypothesis) 
\begin{equation}\label{eq:exp}
E(u_0)<\gamma(\|u_0\|_{L^2}^2)=\gamma(\|Q_\mu\|_{L^2}^2)=E(Q_{\mu}).
\end{equation}
From \eqref{eq:exp} we obtain  
\begin{equation*}
\|u_0\|_{L^2}^2 E(u_0)<\|Q\|_{L^2}^2E(Q),   
\end{equation*}
which corresponds to \eqref{HR3}. 
It is worth remarking how we can claim the equality in  \eqref{eq:exp} (this is precisely the clarification we do with respect to what we wrote in \autoref{remark:JFA}). It is crucial to notice that if $Q$ is a standing state (solving the elliptic equation
with a corresponding Lagrange multiplier), then $Q_{\mu}$ is a standing state  for any $\mu>0$. 
On the other hand if  $Q$  is not a standing state (i.e. it does not solve the elliptic equation
with any Lagrange multiplier), then $Q_{\mu}$ is not a standing state for any $\mu>0$.\\
If we  take two standing states with the same mass, let say $w$ and $v$, with their corresponding Lagrange multipliers, and such that
$E(w)<E(v)$, then $E(w_{\mu})<E(v_{\mu})$ for any $\mu>0$. This is evident by the fact that $E(w_\mu)=\frac 16 \|\nabla w_\mu\|_{L^2}^2=\frac\mu6 \|\nabla w\|_{L^2}^2=\frac\mu6E(w)$ for any $\mu>0$ (indeed $G(w_\mu)$ is always 0). Therefore if $E(w)<E(v)$ then $E(w_{\mu})<\frac\mu6\|\nabla v\|_{L^2}^2=E(v_{\mu})$.\\
This implies that in  the case of a Mountain Pass  solution, if one takes a standing wave $Q$ such that $E(Q)=\gamma (\|Q\|_{L^2}^2)$ then $E(Q_{\mu})=\gamma (\|Q_{\mu}\|_{L^2}^2)$.\\

We prove now the validity of the other condition. If $G(u_0)>0$ and $E(u_0)<\gamma(\|u_0\|_{L^2}^2)=E(Q_{\mu})$, then we have
\begin{equation*}
\frac 16  \|\nabla Q_{\mu}\|_{L^2}^2=E(Q_{\mu})>E(u_0)>E(u_0)-\frac 13 G(u_0)=\frac 16 \|\nabla u_0\|_{L^2}^2
\end{equation*}
and hence
\begin{equation*}
\|u_0\|_{L^2}\|\nabla u_0\|_{ L^2}< \|Q\|_{L^2}\|\nabla Q\|_{L^2}.
\end{equation*}

Let us prove the reverse implication. 

\noindent First of all we notice that if $Q$ is a minimizer for the Weinstein functional, then $E(Q)=\gamma(\|Q\|_{L^2}^2).$ We take a rescaling $Q_\mu$ of $Q$ such that $\|Q_\mu\|_{L^2}^2=\|u_0\|_{L^2}^2$ and as before we can claim that $E(Q_\mu)=\gamma(\|u_0\|_{L^2}^2).$ Therefore \eqref{HR3} implies 
\[
M(u_0)E(u_0)<M(Q)E(Q)=M(Q_\mu)E(Q_\mu)=M(Q_\mu)\gamma(\|u_0\|_{L^2}) \implies E(u_0)<\gamma(\|u_0\|_{L^2}).
\]
Let us focus on the statement ``$\eqref{HR4}\implies G(u_0)>0$''. Suppose  that $G(u_0)\leq0$ and consider $\tilde\mu$ such that $u^{\tilde\mu}\in V(\|u_0\|_{L^2}^2).$  By using \autoref{lem:growth} such  $\tilde\mu$ do exists, $G(u_0^{\tilde\mu})=0$ (by the very definition of $V(\|u_0\|_{L^2}^2)$) and $\tilde\mu\leq1.$ (In particular, if $G(u_0)=0$ then $\tilde\mu=1$.) Hence
\[
E(u^{\tilde\mu})=\frac16\|\nabla u^{\tilde\mu}\|_{L^2}^2=\frac{\tilde\mu}{6}\|\nabla u\|_{L^2}^2\leq\frac16\|\nabla u\|_{L^2}^2<\frac16\|\nabla Q\|_{L^2}^2=E(Q).
\]  
This concludes the proof since we got a function, $u^{\tilde\mu}$, such that $E(u^{\tilde\mu})<E(Q),$ which contradicts the minimality of $E(Q).$ 
\end{proof}
\begin{remark}
It is straightforward to see that in \autoref{prop:coinc}, provided $E(u_0)<\gamma(\|u_0\|_{L^2}^2),$  the condition \eqref{HR4} replaced by 
\begin{equation}\label{HR5}
\|u_0\|_{L^2}\|\nabla u_0\|_{L^2}>\| Q\|_{L^2}\|\nabla Q\|_{L^2}
\end{equation}
will imply that $G(u_0)<0,$ and conversely  \eqref{HR5} is satisfied provided  we assume $G(u_0)<0.$ Hence $E(u_0)<\gamma(\|u_0\|_{L^2}^2)$ and $G(u_0)>0$ or $G(u_0)<0$ give the dichotomy between scattering and blow-up for \eqref{GP}. It is worth mentioning that when restricting to the cubic NLS case, the latter ones are the same described by Holmer and Roudenko in \cite{HR}, namely \eqref{HR3}, \eqref{HR4}, and \eqref{HR5}. 
\end{remark}
\begin{lemma}\label{lem:2.2}
If the initial datum  $u_0$ satisfies  $E(u_0)<\gamma(\|u_0\|_{L^2}^2)$ and $G(u_0)<0$ then $G(u(t))<0$ for any $t\in(-T_{min}, T_{max}).$ More precisely, there exists a positive constant $\delta>0$ such that $G(u(t))\leq-\delta$ for any $t\in(-T_{min}, T_{max}).$ 
\end{lemma}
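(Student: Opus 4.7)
The strategy is to leverage mass/energy conservation together with the variational characterization $\gamma(c) = \inf\{E(u) : u \in V(c)\}$ and the structural Lemma~3.1.

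First I would establish the qualitative sign: set $c = \|u_0\|_{L^2}^2$, which is conserved along the flow, and note that $t \mapsto G(u(t))$ is continuous on $(-T_{min}, T_{max})$ because $u \in \mathcal{C}(H^1)$ and $G$ is continuous on $H^1$. Suppose by contradiction that $G(u(t))$ changes sign. By the intermediate value theorem, since $G(u_0) < 0$, there would exist $t_0 \in (-T_{min}, T_{max})$ with $G(u(t_0)) = 0$. Then $u(t_0) \in V(c)$ by definition, and the variational characterization \eqref{def:vc} gives
\[
E(u_0) = E(u(t_0)) \geq \gamma(c),
\]
contradicting the hypothesis $E(u_0) < \gamma(c)$. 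Hence $G(u(t)) < 0$ for all $t \in (-T_{min}, T_{max})$.

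Next I would upgrade this to the uniform bound. Fix $t$ and write $v = u(t)$. Since $G(v) = T(v) + \tfrac{3}{2}P(v) < 0$ with $T(v) \geq 0$, we get $P(v) < 0$, i.e.\ $\int(\lambda_1+\lambda_2 \hat K)(\widehat{|v|^2})^2\,d\xi < 0$, so the hypothesis of \autoref{lem:growth} is met. Hence there exists a unique $\tilde\mu = \tilde\mu(v) \in (0,1)$ with $v^{\tilde\mu} \in V(c)$, so $E(v^{\tilde\mu}) \geq \gamma(c)$. Now I invoke the concavity of $\mu \mapsto E(v^\mu)$ on $[\tilde\mu, +\infty)$ together with the identity $\frac{d}{d\mu}E(v^\mu) = \frac{G(v^\mu)}{\mu}$. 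Integrating from $\tilde\mu$ to $1$,
\[
E(v^{\tilde\mu}) - E(v) \;=\; -\int_{\tilde\mu}^{1} \frac{G(v^\mu)}{\mu}\,d\mu \;=\; \int_{\tilde\mu}^{1}\frac{|G(v^\mu)|}{\mu}\,d\mu,
\]
where I used that $G(v^\mu) < 0$ for $\mu \in (\tilde\mu, 1]$. By the concavity statement, $\mu \mapsto \frac{G(v^\mu)}{\mu}$ is decreasing, so $\mu \mapsto \frac{|G(v^\mu)|}{\mu}$ is increasing on $[\tilde\mu, +\infty)$, and for $\mu \leq 1$ one has $\frac{|G(v^\mu)|}{\mu} \leq |G(v)|$. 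Therefore
\[
E(v^{\tilde\mu}) - E(v) \;\leq\; (1 - \tilde\mu)\,|G(v)| \;\leq\; |G(v)|.
\]

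Combining the two estimates, using $E(v) = E(u(t)) = E(u_0)$:
\[
|G(u(t))| \;\geq\; E(v^{\tilde\mu}) - E(u_0) \;\geq\; \gamma(c) - E(u_0) \;=:\; \delta \;>\; 0,
\]
where $\delta$ depends only on $u_0$. This delivers $G(u(t)) \leq -\delta$ uniformly in $t \in (-T_{min}, T_{max})$.

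The main technical point is the second step: translating the strict inequality $E(u_0) < \gamma(c)$ into a uniform \emph{quantitative} lower bound on $|G|$. The crucial ingredients are the existence of the rescaling $v^{\tilde\mu}$ landing on $V(c)$ with $\tilde\mu < 1$, so that $v$ itself lies ``past'' the Pohozaev manifold along the rescaling curve, and the monotonicity of $\mu \mapsto G(v^\mu)/\mu$ inherited from concavity --- both furnished directly by \autoref{lem:growth}. The rest is then just a one-dimensional integration.
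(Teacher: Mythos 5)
Your proof is correct and follows essentially the same route as the paper: the continuity/contradiction argument via the variational characterization of $\gamma(c)$ for the qualitative sign, and then \autoref{lem:growth} (existence of $\tilde\mu\in(0,1)$, concavity of $\mu\mapsto E(v^\mu)$, and $\frac{d}{d\mu}E(v^\mu)=G(v^\mu)/\mu$) for the quantitative bound, with your integral form of the mean value argument replacing the paper's direct use of the mean value theorem. A minor bonus of your version is that it yields the manifestly $t$-independent constant $\delta=\gamma(c)-E(u_0)$, whereas the paper's stated $\delta=(1-\tilde\mu)^{-1}(\gamma(c)-E(u_0))$ formally involves $\tilde\mu=\tilde\mu(u(t))$ and one must observe $(1-\tilde\mu)^{-1}\ge 1$ to conclude uniformity.
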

\begin{proof}
Suppose that $G(u(t))>0$ for some time $t\in(-T_{min}, T_{max});$ then by the continuity in time of the function $G(u(t))$ there exists $\tilde t$ such that $G(u(\tilde t))=0.$ By definition we have therefore $\gamma(\|u_0\|_{L^2}^2)\leq E(u(\tilde t))=E(u_0)$ which is a contradiction with respect to the definition of $\gamma (\|u_0\|_{L^2}^2).$ 

We  now prove the uniform bound from below away from zero. We simply denote $u=u(t).$ By \autoref{lem:growth} -- third claim -- there exists $\tilde\mu\in(0,1)$ such that $G(u^{\tilde\mu})=0.$ Then 
\[
E(u_0)-E(u^{\tilde\mu})=(1-\tilde\mu)\frac{d}{d\mu}E(u^\mu)\vert_{\mu=\bar\mu}
\] 
for some $\bar\mu\in(\tilde\mu,1),$ and due to the concavity of $\mu\mapsto E(u^\mu)$ -- see \autoref{lem:growth}, second claim -- we have that
\[
E(u_0)-E(u^{\tilde\mu})=(1-\tilde\mu)\frac{d}{d\mu}E(u^\mu)\vert_{\mu=\bar\mu}\geq (1-\tilde\mu) \frac{d}{d\mu}E(u^\mu)\vert_{\mu=1}=(1-\tilde\mu)G(u)
\] 
where in the last equality we used the last claim of \autoref{lem:growth}. Hence 
\[
G(u(t))\leq (1-\tilde\mu)^{-1}\left( E(u_0)-E(u^{\tilde\mu})\right)\leq (1-\tilde\mu)^{-1} (E(u_0)-\gamma(c)).
\]
The proof is complete with $\delta=(1-\tilde\mu)^{-1}( \gamma(c)-E(u_0)).$
\end{proof}
The previous Lemma implies the pointwise-in-time bound for the function $G(u(t)),$ by means of the homogeneous Sobolev $\dot H^1$-norm of $u(t).$
\begin{lemma}\label{lem:2.3}
There exists $\alpha>0$ such that $G(u(t))\leq-\alpha \|u(t)\|_{\dot H^1}^2$ for any $t\in(-T_{min},T_{max}).$
\end{lemma}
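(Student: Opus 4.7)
The plan is to combine the pointwise identity $E-\tfrac{1}{3}G=\tfrac{1}{6}T$ with a uniform-in-time lower bound on $\|u(t)\|_{\dot H^1}$ obtained from the variational geometry established earlier in the section. Using conservation of mass and energy, the identity yields
\begin{equation*}
G(u(t))=3E(u_0)-\tfrac{1}{2}\|u(t)\|_{\dot H^1}^2,\qquad t\in(-T_{min},T_{max}),
\end{equation*}
so the claim reduces to producing a positive lower bound on $\|u(t)\|_{\dot H^1}^2$ that strictly dominates $6E(u_0)$ (when $E(u_0)\leq 0$ the conclusion is already immediate with $\alpha=\tfrac{1}{2}$).

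The key step is to prove $\|u(t)\|_{\dot H^1}^2>6\gamma(c)$, where $c=\|u_0\|_{L^2}^2$. By \autoref{lem:2.2}, $G(u(t))<0$ for every admissible $t$; combined with $G=T+\tfrac{3}{2}P$ and $T(u(t))>0$ (mass conservation forces $u(t)\not\equiv 0$), this gives $P(u(t))<0$, which is exactly the hypothesis needed to invoke \autoref{lem:growth} at time $t$. That lemma produces $\tilde\mu(t)\in(0,1)$ with $u(t)^{\tilde\mu(t)}\in V(c)$, hence $G(u(t)^{\tilde\mu(t)})=0$. Re-using $E-\tfrac{1}{3}G=\tfrac{1}{6}T$ on $u(t)^{\tilde\mu(t)}$ together with the scaling $T(u^{\mu})=\mu^{2}T(u)$ of the $L^{2}$-preserving dilation yields $E(u(t)^{\tilde\mu(t)})=\tfrac{\tilde\mu(t)^{2}}{6}\|u(t)\|_{\dot H^1}^{2}$. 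The variational characterization \eqref{def:vc} forces $E(u(t)^{\tilde\mu(t)})\geq\gamma(c)$, and since $\tilde\mu(t)<1$ we conclude $\|u(t)\|_{\dot H^1}^{2}\geq 6\gamma(c)/\tilde\mu(t)^{2}>6\gamma(c)$.

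To close the argument I would set $\alpha=\min\bigl\{\tfrac{1}{2},\,\tfrac{\gamma(c)-E(u_0)}{2\gamma(c)}\bigr\}>0$, the positivity being a consequence of $E(u_0)<\gamma(c)$, and compute
\begin{equation*}
G(u(t))+\alpha\|u(t)\|_{\dot H^1}^{2}=3E(u_0)-\bigl(\tfrac{1}{2}-\alpha\bigr)\|u(t)\|_{\dot H^1}^{2}\leq 3E(u_0)-6\bigl(\tfrac{1}{2}-\alpha\bigr)\gamma(c)\leq 0,
\end{equation*}
where the first inequality uses $\tfrac{1}{2}-\alpha\geq 0$ together with $\|u(t)\|_{\dot H^1}^{2}>6\gamma(c)$, and the second is the defining property of $\alpha$ (the right-hand side reduces to $3E(u_0)\leq 0$ when $E(u_0)\leq 0$, and to an exact equality when $E(u_0)\in(0,\gamma(c))$). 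I do not anticipate a genuine obstacle here: each ingredient is already in place in \autoref{lem:growth} and \autoref{lem:2.2}, and the only point meriting care is the verification that the sign hypothesis of \autoref{lem:growth} propagates in time, which is immediate from \autoref{lem:2.2}.
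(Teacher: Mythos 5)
Your proof is correct, and its skeleton --- the identity $G=3E-\tfrac12 T$ combined with a uniform-in-time bound coming from the variational structure, followed by an explicit choice of $\alpha$ --- is the same as the paper's. The difference lies in where the uniformity comes from. The paper takes the quantitative bound $G(u(t))\leq-\delta$ already established in \autoref{lem:2.2} (via the concavity of $\mu\mapsto E(u^\mu)$ and the mean value theorem), writes $G+\alpha T=(1-2\alpha)G+6\alpha E$, and concludes for $\alpha\ll1$. You instead use \autoref{lem:2.2} only for the sign of $G$ (to legitimize invoking \autoref{lem:growth}) and re-derive the needed uniform input as a kinetic-energy lower bound $\|u(t)\|_{\dot H^1}^2>6\gamma(c)$, obtained from $E(u^{\tilde\mu})=\tfrac{\tilde\mu^2}{6}T(u)\geq\gamma(c)$ and $\tilde\mu<1$. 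Both routes are valid; yours has the mild advantages of producing an explicit constant $\alpha$ and of delivering \autoref{coro1} (the uniform lower bound on $\|u(t)\|_{\dot H^1}$) for free, since $\gamma(c)>0$, whereas the paper proves that corollary separately via Gagliardo--Nirenberg. The only point worth making explicit in a final write-up is the positivity of the mountain pass level $\gamma(c)$, which you use both to get the strict inequality $T>6\gamma(c)$ and to define $\alpha$ when $E(u_0)>0$; this is asserted in \autoref{sec:3} (and in \cite{BJ}), and in the remaining case $E(u_0)\leq0$ your separate treatment with $\alpha=\tfrac12$ makes the argument self-contained.
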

\begin{proof}
It follows from \autoref{lem:2.2} and the identity \eqref{eq:id-egt}, that $\|u(t)\|_{\dot H^1}^2>6E(u_0).$  By exploiting again the identity \eqref{eq:id-egt} we write $\|u(t)\|_{\dot H^1}^2=6E-2G(u(t)),$ so we have 
\begin{equation}\label{eq:G-al}
G(u(t))+\alpha\|u(t)\|_{\dot H^1}^2=(1-2\alpha)G(u(t))+6\alpha E.
\end{equation}
As we have that $G\leq-\delta$, it is straightforward  to see that for $\alpha\ll1$ the claim follows, since for $\alpha$ sufficiently small, the r.h.s. of \eqref{eq:G-al} is bounded by $-\delta/2$. 
\end{proof}
We give the following simple consequence of the previous Lemma.
\begin{corollary}\label{coro1}
There exists a positive constant $c>0$ such that 
\[
\inf_{t\in(-T_{min},T_{max})}\|u(t)\|_{\dot H^1}\geq c.
\]
\end{corollary}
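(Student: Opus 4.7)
The plan is to combine the strict negativity of $G$ furnished by \autoref{lem:2.2} with a Gagliardo--Nirenberg control on the potential energy $P(u)$. The key observation is that \autoref{lem:2.3} goes the ``wrong way'' for this corollary: it bounds $\|u(t)\|_{\dot H^1}$ from above by $-G(u(t))/\alpha$, so it cannot directly yield a positive lower bound. Instead, I would extract the lower bound from the fact that $-G$ forces the potential energy to be uniformly bounded away from zero, while $P(u(t))$ itself is controlled by $\|u(t)\|_{\dot H^1}^3$.

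First I would record that, by \autoref{lem:2.2}, there is $\delta>0$ with $G(u(t))\leq -\delta$ for all $t\in(-T_{min},T_{max})$. Using the decomposition $G(u)=T(u)+\tfrac32 P(u)$ from \eqref{GG} and the fact that $T(u(t))=\|u(t)\|_{\dot H^1}^2\geq 0$, this yields
\[
-\tfrac{3}{2}P(u(t))\geq \delta+T(u(t))\geq \delta,
\]
so $P(u(t))<0$ and $|P(u(t))|\geq \tfrac{2}{3}\delta$ uniformly in $t$.

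Next I would bound $|P(u(t))|$ from above. Since $\hat K\in[-\tfrac{4}{3}\pi,\tfrac{8}{3}\pi]$, Plancherel's identity gives
\[
|P(u(t))|\lesssim \int_{\R^3}(\widehat{|u(t)|^2})^2(\xi)\,d\xi\lesssim \|u(t)\|_{L^4}^4,
\]
and by the three-dimensional Gagliardo--Nirenberg inequality together with conservation of mass,
\[
\|u(t)\|_{L^4}^4\lesssim \|u(t)\|_{L^2}\|u(t)\|_{\dot H^1}^3=M(u_0)^{1/2}\|u(t)\|_{\dot H^1}^3.
\]
Combining the two-sided estimate $\tfrac{2}{3}\delta\leq |P(u(t))|\lesssim M(u_0)^{1/2}\|u(t)\|_{\dot H^1}^3$ gives the uniform lower bound
\[
\|u(t)\|_{\dot H^1}\geq \left(\frac{c_0\,\delta}{M(u_0)^{1/2}}\right)^{1/3}=:c>0,
\]
which is the claim. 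There is no substantial obstacle: everything reduces to the interplay between the sign information on $G$ (hence on $P$) coming from \autoref{lem:2.2} and a standard Gagliardo--Nirenberg bound, with the conservation of mass ensuring that the constant is uniform in time.
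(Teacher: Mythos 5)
Your proof is correct and follows essentially the same route as the paper's: both rest on the uniform bound $G(u(t))\le-\delta$ from \autoref{lem:2.2} combined with the Gagliardo--Nirenberg control of the quartic/potential terms by $\|u(t)\|_{\dot H^1}$. The only cosmetic difference is that the paper argues by contradiction along a sequence $\|u(t_n)\|_{\dot H^1}\to0$, whereas you isolate $P(u(t))$ using $T\ge 0$ and obtain an explicit quantitative constant, which is a perfectly fine (slightly sharper) rendering of the same idea.
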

\begin{proof}
Suppose that there exists a sequence of times $\{t_n\}_{n\in\N}$ such that $\lim_{n\to\infty}\|u(t_n)\|_{\dot H^1}=0.$ Then by the Gagliardo-Nirenberg's inequality $\lim_{n\to\infty}\|u(t_n)\|_{L^4}=0$ as well. But then $G(u(t_n))\to0,$ since by the $L^2\mapsto L^2$ property of the dipolar kernel 
\[
|G(u(t_n))|\lesssim \|u(t_n)\|_{\dot H^1}^2+\|u(t_n)\|_{L^4}^4\to0,
\]
which contradicts \autoref{lem:2.2}
\end{proof}


\section{Blow-up for the focusing cubic NLS}\label{sec:NLS}
In this chapter we prove \autoref{thm:main-nls} regarding the NLS \eqref{NLS}. We first construct a suitable cut-off function localizing in the exterior of a cylinder parallel to the $x_3$ axis, which we then plug it in the virial identities below. This cut-off function will be used also in the proof of \autoref{thm:main} regarding the blow-up in finite time for solutions to the GPE, but the estimates we need in order to control some remainders in the presence of the dipolar kernel are much more involved. Therefore we prefer to state the result for the focusing cubic NLS which has its own interest, passing to the analysis of the equation \eqref{GP} in the next chapter. Previous papers in literature about the formation of singularities in finite time for $L^2$-supercritical focusing NLS in 3D are \cite{KRRT, HR07, GM, DWZ, RS}, besides the already cited \cite{Mar, Gla, OT}.

We now introduce some classical virial identities, valid both for \eqref{GP} and \eqref{NLS}. Let $u(t)$ be a solution to \eqref{GP}, which also corresponds to a solution to \eqref{NLS} for $\lambda_2=0$ and $\lambda_1<0;$ then we define 
\begin{equation}\label{vir0}
V_{\rho}(t)=2\int \rho(x)|u(t,x)|^2\,dx,
\end{equation} 
$\rho(x)$ being a sufficiently smooth function which justifies the following formal and standard computations:   
\begin{equation}\label{virial1}
\begin{aligned}
\frac{d}{dt}V_{\rho}(t)&=2\Im\left\{\int\nabla \rho\cdot\nabla u\bar u\,dx\right\},
\end{aligned}
\end{equation}
where we used the equation satisfied by $u=u(t).$
By using \eqref{virial1} and again the equation solved by $u(t),$ we have 
\begin{equation*}
\begin{aligned}
\frac{d^2}{dt^2}V_{\rho}(t)&=2 \int\left(\nabla^2\rho\cdot\nabla u\right)\cdot\nabla\bar u\,dx-\frac{1}{2}\int\Delta^2\rho|u|^2\,dx\\
&+\lambda_1\int\Delta \rho|u|^4\,dx-2\lambda_2\int\nabla \rho\cdot\nabla\left(K\ast|u|^2\right)|u|^2\,dx.
\end{aligned}
\end{equation*}
Therefore for the rescaled function $\rho_R(x):=R^2\rho(x/R)$ we get 
\begin{equation}\label{virial:g}
\begin{aligned}
\frac{d^2}{dt^2}V_{\rho_R}(t)&=2\int\left(\nabla^2 \rho\left(\frac xR\right)\cdot\nabla u\right)\cdot\nabla\bar u\,dx-\frac{1}{2R^2}\int\Delta^2\rho\left(\frac xR\right)|u|^2\,dx\\
&+\lambda_1\int\Delta\rho\left(\frac xR\right)|u|^4\,dx-2\lambda_2R\int\nabla\rho\left(\frac xR\right)\cdot\nabla\left(K\ast|u|^2\right)|u|^2\,dx.
\end{aligned}
\end{equation}

Let us precisely build the (rescaled) function $\rho,$ which is in particular a radial cut-off function depending only on the two variables  $\bar x=(x_1,x_2):$ 
\[
\rho_R(x)=\rho_R(|\bar x|)=R^2\rho(|\bar x|/R)=R^2\psi(|\bar x|^2/R^2)
\]   
where
\[
\psi(r)=r-\int_0^r(r-s)\eta(s)\,ds,
\]
and the function $\eta:\R\mapsto\R^+_0$ is a function satisfying the following properties: it is a nonnegative, regular function with unitary mean, namely:
\begin{equation*}
\begin{cases}
\eta\in\mathcal C^\infty_c(\R;\R^+_0)\\
\eta(s)=0 \quad  \hbox{ for } s\leq1  \hbox{ and } s\geq2\\
\int_\R \eta(s)\,ds=1
\end{cases}.
\end{equation*}  
We have, for $i,j\in\{1,2\}$ and  $\delta_{ij}$ being the usual Kronecker symbol,  
\[
\partial_{x_i}\rho_R(x)=2x_i\psi^\prime(|\bar x|^2/R^2)
\]
and
\[
\partial_{x_j,x_i}^2\rho_R(x)=2\delta_{ij}\psi^\prime(|\bar x|^2/R^2)+\frac{4x_ix_j}{R^2}\psi^{\prime\prime}(|\bar x|^2/R^2),
\]
hence
\[
\begin{aligned}
\left(\nabla^2\rho\left(\frac xR\right)\nabla u\right)\cdot\nabla\bar u&=2\psi^\prime(|\bar x|^2/R^2)|\nabla_{\bar x}u|^2+\frac{4}{R^2}|\bar x|^2\psi^{\prime\prime}(|\bar x|^2/R^2)|\nabla_{\bar x}u|^2\\
&=2|\nabla_{\bar x}u|^2\left( \psi^\prime(|\bar x|^2/R^2)+\frac{2}{R^2}|\bar x|^2\psi^{\prime\prime}(|\bar x|^2/R^2)\right)
\end{aligned}
\]
and 
\[
\Delta \rho_R=4\psi^\prime(|\bar x|^2/R^2)+\frac{4}{R^2}|\bar x|^2\psi^{\prime\prime}(|\bar x|^2/R^2).
\]
We observe moreover that $\Delta^2\rho\in L^\infty$ and therefore we can estimate $R^{-2}\int\Delta^2\rho(x/R)|u|^2\,dx $ as
\begin{equation}\label{small:mass}
R^{-2}\int\Delta^2\rho(x/R)|u|^2\,dx \leq C(\|\Delta\rho\|_{L^\infty}) R^{-2}\|u(t)\|_{L^2}^2=CMR^{-2},
\end{equation}
by using the conservation of the mass. 

We compute explicitly the following:
\begin{equation}\label{eq:pos}
-F_R(\bar x):=\psi^\prime(|\bar x|^2/R^2)+\frac{|\bar x|^2}{R^2}\psi^{\prime\prime}(|\bar x|^2/R^2)-1=-\int_0^{|\bar x|^2/R^2}\eta(s)\,ds-\frac{|\bar x|^2}{R^2}\eta(|\bar x|^2/R^2)\leq0.
\end{equation}
We observe that by its construction 
\begin{equation}\label{eq:notzero}
L^\infty\ni F_R
\begin{cases}
\,=0 &\quad\textit{for any } \quad |\bar x|\leq R \\
\,>0 &\quad\textit{for any } \quad |\bar x|> R
\end{cases}.
\end{equation}
and the boundedness is uniform with respect to $R\geq1.$ 

Moreover we consider the not-localized virial identity in the $x_3$ direction; we namely plug in \eqref{vir0} the function $\rho=Ax_3^2,$ $A$ being a positive constant,  and we simply get, 
\begin{equation}\label{vir:x2}
\frac{d^2}{dt^2}V_{Ax_3^2}(t)=4A\int|\partial_{x_3}u|^2\,dx+2A\lambda_1\int|u|^4\,dx
-4A\lambda_2\int x_3\partial_{x_3}\left(K\ast|u|^2\right)|u|^2\,dx.
\end{equation}
For $A=1$ and by plugging $\lambda_2=0$ in \eqref{virial:g}, we are going to prove that 
\begin{equation}\label{eq:intro-vir-2}
\begin{aligned}
\frac{d^2}{dt^2}V_{\rho_R+x_3^2}(t)&\leq 4\int|\nabla u|^2 dx +6 \lambda_1\int |u|^4 dx+\tilde H_R(u(t))\\
&=4G(u(t))+\tilde H_R(u(t)),
\end{aligned}
\end{equation}
where 
\begin{equation}\label{eq:o1}
\tilde H_R(u(t))=o_R(1)+o_R(1)\|u(t)\|_{\dot H^1}^2;
\end{equation}
therefore \autoref{lem:2.3}, \autoref{coro1} coupled with a convexity argument will yield to the result stated in \autoref{thm:main-nls}, as long as $R\gg1.$ Note that for $\lambda_2=0,$ $G(u)$ is reduced to $4\int|\nabla u|^2 dx +6 \lambda_1\int |u|^4 dx.$

\subsection{Estimate of the remainder $H_R(u(t))$}
Once fixed  $\lambda_2=0,$  and for $\lambda_1<0,$ by suitably manipulating \eqref{virial:g} by adding and subtracting some quantity, it can be rewritten as 
\begin{equation*}
\begin{aligned}
\frac{d^2}{dt^2}V_{\rho_R}(t)&=4\int\left(\psi^\prime(|\bar x|^2/R^2)+\frac{2}{R^2}|\bar x|^2\psi^{\prime\prime}(|\bar x|^2/R^2)\right)|\nabla_{\bar x}u|^2\,dx\\
&+\lambda_1\int\Delta\rho( x/R)|u|^4\,dx-\frac{1}{2R^2}\int\Delta^2\rho( x/R)|u|^2\,dx\\
&=4\int\left(\psi^\prime(|\bar x|^2/R^2)+\frac{2}{R^2}|\bar x|^2\psi^{\prime\prime}(|\bar x|^2/R^2)+1-1\right)|\nabla_{\bar x}u|^2\,dx\\
&+4\int|\partial_{x_3}u|^2\,dx-4\int|\partial_{x_3}u|^2\,dx-\frac{1}{2R^2}\int\Delta^2\rho( x/R)|u|^2\,dx\\
&+4\lambda_1\int\left( \psi^\prime(|\bar x|^2/R^2)+\frac{|\bar x|^2}{R^2}\psi^{\prime\prime}(|\bar x|^2/R^2)+\frac32-\frac32\right)|u|^4\,dx,
\end{aligned}
\end{equation*}
and by using the definition of $G(u),$ see \eqref{GG}, taking into account that the potential energy $P(u)$ is simply given by $\lambda_1\int|u|^4\,dx,$ see \eqref{potential:en} for $\lambda_2=0,$ we get
\begin{equation*}
\begin{aligned}
\frac{d^2}{dt^2}V_{\rho_R}(t)&=4G(u(t))-\frac{1}{2R^2}\int\Delta^2\rho( x/R)|u|^2\,dx-4\int|\partial_{x_3}u|^2\,dx\\
&+4\int\left(\psi^\prime(|\bar x|^2/R^2)+\frac{2}{R^2}|\bar x|^2\psi^{\prime\prime}(|\bar x|^2/R^2)-1\right)|\nabla_{\bar x}u|^2\,dx\\
&+4\lambda_1\int\left( \psi^\prime(|\bar x|^2/R^2)+\frac{|\bar x|^2}{R^2}\psi^{\prime\prime}(|\bar x|^2/R^2)-\frac32\right)|u|^4\,dx
\end{aligned}
\end{equation*}
and 
\[
\frac{d^2}{dt^2}V_{Ax_3^2}(t)=4A\int|\partial_{x_3}u|^2\,dx
+2A\lambda_1\int|u|^4\,dx,\] 
therefore by summing up the two terms we get
\begin{equation}\label{virial:nls2}
\begin{aligned}
\frac{d^2}{dt^2} V_{\rho_R+Ax_3^2}(t)&=4G(u(t))-\frac{1}{2R^2}\int\Delta^2\rho( x/R)|u|^2\,dx-4(1-A)\int|\partial_{x_3}u|^2\,dx\\
&+4\int\left(\psi^\prime(|\bar x|^2/R^2)+\frac{2}{R^2}|\bar x|^2\psi^{\prime\prime}(|\bar x|^2/R^2)-1\right)|\nabla_{\bar x}u|^2\,dx\\
&+4\lambda_1\int\left( \psi^\prime(|\bar x|^2/R^2)+\frac{|\bar x|^2}{R^2}\psi^{\prime\prime}(|\bar x|^2/R^2)-\frac32+\frac A2\right)|u|^4\,dx\\
&\leq 4G(u(t))+CR^{-2}\\
&+4|\lambda_1|\int\left(1-\psi^\prime(|\bar x|^2/R^2)-\frac{|\bar x|^2}{R^2}\psi^{\prime\prime}(|\bar x|^2/R^2)\right)|u|^4\,dx,
\end{aligned}
\end{equation}
where we have set $A=1,$ we have used the definition of $G(u),$ \eqref{small:mass}, and \eqref{eq:pos}. Note that, even if it is a simple computation, we preferred to state the virial identity \eqref{vir:x2} with a general constant $A$ to emphasize how the choice $A=1$ is precisely done to make appear -- in the last line of \eqref{virial:nls2} -- the function $F_R$ defined above.
At this point \eqref{virial:nls2} reduces to 
\begin{equation*}
\begin{aligned}
\frac{d^2}{dt^2}V_{\rho_R+x_3^2}(t)&\leq 4G(u(t))+CR^{-2}+4|\lambda_1| \iint_{|\bar x|>R} F_R(\bar x)|u|^4\,d\bar x\,dx_3.
\end{aligned}
\end{equation*}
We estimate, in the spirit of Martel \cite{Mar},
\begin{equation}\label{0term}
\iint F_R(\bar x)|u|^4\,d\bar x\,dx_3\leq \int \|F_R|u|^2\|_{L^\infty_{\bar x}}\|u\|_{L^2_{\bar x}}^2\,dx_3\leq \|u\|_{L^\infty_{x_3}L^2_{\bar x}}^2\int\|F_R|u|^2\|_{L^\infty_{\bar x}}\,dx_3,
\end{equation}
where the norm in the $\bar x$ variable are meant in the domain $\{|\bar x|\geq R\}$ due to \eqref{eq:notzero} .
We  now use the Strauss embedding for a radial function $g(\bar x), \,\bar x\in\R^2$ and $g\in H^1,$ see \cite{CO},
\begin{equation}\label{strauss-emb}
\|g\|_{L^\infty_{\bar x}(|\bar x|>R)}\lesssim R^{-1/2}\|g\|_{L^2_{\bar x}}^{\frac 12}\|g\|_{\dot H^1_{\bar x}}^{\frac 12},
\end{equation}
hence by recalling that $F_R$ is bounded in space uniformly in $R\geq1,$ we obtain, by using \eqref{strauss-emb} and the Cauchy-Schwarz's inequality, that
\begin{equation}\label{1term}
\begin{aligned}
\int\|F_R|u|^2\|_{L^\infty_{\bar x}}\,dx_3&\lesssim \int \||u|^2\|_{L^\infty_{\bar x}(|\bar x|\geq R)}\,dx_3\lesssim \int \|u\|_{L^\infty_{\bar x}(|\bar x|\geq R)}^2\,dx_3\lesssim R^{-1}\|u\|_{\dot H^1}.
\end{aligned}
\end{equation}
On the other hand, by calling $g(x_3)=\int_{\R^2}|u|^2(\bar x,x_3)\,d\bar x$ we have
\[
g(x_3)=\int_{-\infty}^{x_3} \partial_sg(s)\,ds=2\Re \int_{-\infty}^{x_3} \left( \int_{\R^2} \bar u \partial_s u\,d\bar x \right)ds\leq 2 \int |u||\nabla u|\, dx \leq2 \sqrt M\| u\|_{\dot H^1}
\]
and then 
\begin{equation}\label{2term}
\|u\|_{L^\infty_{x_3} L^2_y}^2\lesssim \| u\|_{\dot H^1}.
\end{equation}
By glueing up \eqref{1term} and \eqref{2term} we get that  \eqref{0term} satisfies
\begin{equation}\label{est:F4}
\iint F_R|u|^4\,d\bar x\,dx_3\lesssim R^{-1}\|u\|_{\dot H^1}^2.
\end{equation}
We conclude that for some $\alpha>0$
\begin{equation*}
\frac{d^2}{dt^2}V_{\rho_R+x_3^2}(t)\lesssim G(u(t))+R^{-1}+R^{-1}\| u(t)\|_{\dot H^1}^2 \lesssim-\frac{\alpha}{2}\|u(t)\|_{\dot H^1}^2\lesssim -1,
\end{equation*}
where in  last step we used  \autoref{lem:2.2}, \autoref{lem:2.3}, \autoref{coro1} and we have chosen $R\gg1.$ 
We can eventually conclude  that $T_{max}<+\infty$ by a convexity argument. Indeed, it implies that there exists a time $T_0$ such that 
\[\int(\rho_R+x_3^2)|u(t)|^2\,dx\to 0 \quad \hbox{ as }\quad t\to T_0.
\]
We observe the following:  by using  the Weyl-Heisenberg's inequality $\|xf\|_{L^2}\|f\|_{\dot H^1}\gtrsim\|f\|_{L^2}^2$ in 1D we get  
\[
\|u_0\|_{L^2}^2=\|u(t)\|_{L^2}^2=\int_{\R^2}\int_{\R}|u(t)|^2\,dx_3\,d\bar x\lesssim \int_{\R^2} \left(\int_{\R}|\partial_{x_3}u(t)|^2\,dx_3\right)^{1/2} \left(\int_{\R}x_3^2|u(t)|^2\,dx_3\right)^{1/2}\,d\bar x
\]
and by using the Cauchy-Schwarz's inequality we conclude with the estimate 
\[
\|x_3u(t)\|_{L^2}\|\partial_{x_3}u(t)\|_{L^2}\gtrsim\|u_0\|_{L^2}^2>0.
\]
Then $\|u(t)\|_{\dot H^1}\to\infty$ as $t\to T_0.$
The proof of \autoref{thm:main-nls} is therefore concluded.

\section{Blow-up for the dipolar GPE}\label{sec:GPE}

This last chapter is devoted to the proof of \autoref{thm:main}. With respect to the NLS equation, when dealing with the equation \eqref{GP} governing a dipolar BEC,  we get an additional term to be estimated in the sum of the virial identies \eqref{virial:g}  and \eqref{vir:x2}, namely the sum of the two terms involving $\lambda_2.$ 
It is worth mentioning that all the terms that we have shown to be small  in the NLS case will remain the same. What we are going to prove is that for the dipolar GPE \eqref{GP} we have a virial-type estimate of the form (cf. with \eqref{eq:intro-vir-2}, \eqref{eq:o1}) 
\begin{equation}\label{eq:intro-vir-3}
\frac{d^2}{dt^2}V_{\rho_R+x_3^2}(t)\leq 4\int|\nabla u|^2 dx +6 \lambda_1\int |u|^4 dx+H_R(u(t))
\end{equation}
where 
\begin{equation}\label{eq:HVW}
H_R=\tilde H_R-2\lambda_2\left(\int\nabla\rho_R\cdot\nabla\left(K\ast|u|^2\right)|u|^2\,dx+2\int x_3\partial_{x_3}\left(K\ast|u|^2\right)|u|^2\,dx\right).
\end{equation}
As already proved in \autoref{sec:NLS}, see \eqref{est:F4},
\begin{equation}\label{eq:recall-H}
\tilde H_R(u(t))=o_R(1)+o_R(1)\|u(t)\|_{\dot H^1}^2,
\end{equation}  
so we are going to show that 
\begin{equation}\label{eq:V+W}
\mathcal V+\mathcal W:=\int\nabla\rho_R\cdot\nabla\left(K\ast|u|^2\right)|u|^2\,dx+2\int x_3\partial_{x_3}\left(K\ast|u|^2\right)|u|^2\,dx
\end{equation}
will contribute for a term $-3\int(K\ast|u|^2)|u|^2\,dx$ that will yield to the quantity $4G(u)$ (see the definition \eqref{GG}) when  summed to the first two terms in the r.h.s. of \eqref{eq:intro-vir-3}, plus a remainder term which must be proved to be small, again of the type $o_R(1)+o_R(1)\|u\|_{\dot H^1}^2.$

\subsection{Estimate of the remainder $H_R(u(t)).$ The $\mathcal V$ term}
By its definition, we get that the function $\rho_R$ fulfils  
\[
\partial_{x_i}\rho_R(x)=2x_i\psi^\prime(|\bar x|^2/R^2)=2\bar x\left(1-\int_0^{|\bar x|^2/R^2}\eta(s)\,ds\right)=\begin{cases}
2\bar x \quad &if \quad |\bar x|^2/R^2\leq 1\\
0 \quad &if \quad  |\bar x|^2/R^2>2
\end{cases},
\]
hence $\mbox{supp}\nabla_{\bar x}\rho_R$ is contained in the cylinder of radius $\sqrt 2 R.$

We split the function $u$ by partitioning the whole space in the region inside and the region outside  a cylinder, namely we write  $u=u_i+u_o$ where 
\[u_i=\bold 1_{\{|\bar x|\leq 4R\}}u \quad \hbox{ and } \quad u_o=\bold 1_{\{|\bar x|\geq 4R\}}u.
\] 
Since $\mbox{supp} \nabla \rho_R\cap \mbox{supp}\, u_o=\emptyset$ we get
\[
\begin{aligned}
\mathcal V&:=\int\nabla_{\bar x}\rho_R\cdot\nabla_{\bar x}\left(K\ast|u|^2\right)|u|^2\,dx\\
&=\int\nabla_{\bar x}\rho_R\cdot\nabla_{\bar x}\left(K\ast|u_o|^2\right)|u_i|^2\,dx-\int\nabla_{\bar x}\rho_R\cdot\nabla_{\bar x}\left(K\ast|u_i|^2\right)|u_i|^2\,dx\\
&=\mathcal V_{o,i}+\mathcal V_{i,i}.
\end{aligned}
\]
\noindent\emph{Estimate for the term $\mathcal V_{o,i}$.} The term $\mathcal V_{o,i}$ can be estimated in this way: by integrating by parts,
\[
\begin{aligned}
\mathcal V_{o,i}&=\int\nabla_{\bar x}\rho_R\cdot\nabla_{\bar x}\left(K\ast|u_o|^2\right)|u_i|^2\,dx\\
&=-\int\Delta_{\bar x}\rho_R\left(K\ast|u_o|^2\right)|u_i|^2\,dx-\int\nabla_{\bar x}\rho_R\cdot\nabla_{\bar x}\left(|u_i|^2\right)\left(K\ast|u_o|^2\right)\,dx=\mathcal V_{o,i}^\prime+\mathcal V_{o,i}^{\prime\prime}
\end{aligned}
\]
and, by using \autoref{lemma:in-out}, in particular the pointwise estimate \eqref{eq:in-out}, we obtain
\[
|\mathcal V_{o,i}^\prime|\lesssim\int \bold 1_{\{|\bar x|\leq \sqrt 2R\}}\left|K\ast|u_o|^2\right||u_i|^2\,dx\lesssim R^{-3}\|u_o\|_{L^2}^2\|u_i\|_{L^2}^2\lesssim R^{-3}\|u\|_{L^2}^4\lesssim R^{-3}.
\]
Similarly, by using that  $|\nabla_{\bar x}\rho_R|\lesssim R$ on its support in conjunction with the Cauchy-Schwarz's inequality, we get
\[
\begin{aligned}
|\mathcal V_{o,i}^{\prime\prime}|&\lesssim\int \bold 1_{\{|\bar x|\leq \sqrt 2R\}}\left|\nabla_{\bar x}\rho_R\right|\left|K\ast|u_o|^2\right| |u_i||\nabla_{\bar x} u_i|\,dx\\
&\lesssim R \int \bold 1_{\{|\bar x|\leq \sqrt 2R\}}\left|K\ast|u_o|^2\right| |u_i||\nabla_{\bar x} u_i|\,dx\lesssim R^{-2}\|u_o\|_{L^2}^2\|u_i\|_{L^2}\|u_i\|_{\dot H^1}\lesssim R^{-2}\|u\|_{\dot H^1}
\end{aligned}
\]
and then the estimate for $\mathcal V_{o,i}$ is concluded by summing up the two bounds above:
\begin{equation}\label{est:Voi}
|\mathcal V_{o,i}|\lesssim R^{-2}\|u\|_{\dot H^1}+R^{-3}.
\end{equation}
\noindent \emph{Estimate for the term $\mathcal V_{i,i}$.} We analyse the term $\mathcal V_{i,i}.$ We do a further splitting and we introduce another localization function.
By setting up $\tilde\rho_R=\rho_R-|\bar x|^2$ we can write
\begin{equation}\label{eq:split-V}
\begin{aligned}
\mathcal V_{i,i}&=\int\nabla_{\bar x}\rho_R\cdot\nabla_{\bar x}\left(K\ast|u_i|^2\right)|u_i|^2\,dx\\
&=\int\nabla_{\bar x}\tilde\rho_R\cdot\nabla_{\bar x}\left(K\ast|u_i|^2\right)|u_i|^2\,dx+2\int\bar x\cdot\nabla_{\bar x}\left(K\ast|u_i|^2\right)|u_i|^2\,dx=\mathcal V_{i,i}^\prime+\mathcal V_{i,i}^{\prime\prime}.
\end{aligned}
\end{equation}
We further localize the function $u_i$  by splitting again $u_i$ as $u_i=w_{i,i}+w_{i,o},$ where 
\[
w_{i,i}=\bold 1_{\{|\bar x|\leq R/10\}}u_i \quad \hbox{ and } \quad w_{i,o}=\bold 1_{\{|\bar x|\geq R/10\}}u_i=\bold 1_{\{R/10\leq|\bar x|\leq 4R\}}u.
\]
By using  the fact that $\mbox{supp}\nabla_{\bar x}\tilde\rho_R$ is contained in $\{|\bar x|\geq R\},$ then $\mbox{supp}\nabla_{\bar x}\tilde\rho_R\cap \{|\bar x|\leq R/10\}=\emptyset,$ we write
\[
\begin{aligned}
\mathcal V_{i,i}^\prime&=\int\nabla_{\bar x}\tilde\rho_R\cdot\nabla_{\bar x}\left(K\ast|w_{i,i}|^2\right)|w_{i,o}|^2\,dx\\
&+\int\nabla_{\bar x}\tilde\rho_R\cdot\nabla_{\bar x}\left(K\ast|w_{i,o}|^2\right)|w_{i,o}|^2\,dx=\mathcal A+\mathcal B.
\end{aligned}
\]
Now, similarly to the term $\mathcal V_{o,i},$ by integrating by parts and by using in this case \autoref{lemma:out-in}, and precisely the pointwise estimate \eqref{eq:out-in}, we have 
\begin{equation}\label{eq:sec5:A-1}
\begin{aligned}
\mathcal A&=\int\nabla_{\bar x}\tilde\rho_R\cdot\nabla_{\bar x}\left(K\ast|w_{i,i}|^2\right)|w_{i,o}|^2\,dx=-\int_{\{R\leq|\bar x|\leq 4R\}}\Delta_{\bar x}\tilde\rho_R\left(K\ast|w_{i,i}|^2\right)|w_{i,o}|^2\,dx\\
&-\int_{\{R\leq|\bar x|\leq 4R\}}\nabla_{\bar x}\tilde\rho_R\cdot\nabla_{\bar x}\left(|w_{i,o}|^2\right)\left(K\ast|w_{i,i}|^2\right)\,dx\\
&\lesssim R^{-3}\|w_{i,i}\|_{L^2}^2\|w_{i,o}\|_{L^2}^2+R^{-2}\|w_{i,i}\|_{L^2}^2\|w_{i,o}\|_{L^2}\|w_{i,o}\|_{\dot H^1}\lesssim R^{-3}+R^{-2}\|u\|_{\dot H^1}.
\end{aligned}
\end{equation}
Note that we used the following facts: $|\Delta_{\bar x}\tilde\rho_R|\lesssim 1,$ and by recalling that  $u_i=\bold 1_{\{|\bar x|\leq 4R\}}u,$ we infer that $|\nabla\tilde\rho_R|\lesssim R$ on $\{|\bar x|\leq 4R\}.$ \\
\noindent \emph{Estimate for the term $\mathcal B$.} Hence it remains to prove a suitable estimate for the term
\begin{equation}\label{eq:sec5:B-2}
\mathcal B=\int_{\{R\leq|\bar x|\leq \sqrt2R\}}\nabla_{\bar x}\tilde\rho_R\cdot\nabla_{\bar x}\left(K\ast|w_{i,o}|^2\right)|w_{i,o}|^2\,dx.
\end{equation}
By setting $g=|w_{i,o}|^2$ and making use of the Plancherel identity we get 
\begin{equation*}
\int\nabla_{\bar x}\tilde\rho_R\cdot\nabla_{\bar x}\left(K\ast|w_{i,o}|^2\right)|w_{i,o}|^2\,dx=\int \widehat{g\nabla_{\bar x}\tilde\rho}(\xi)\cdot\bar{\xi}\hat K\bar{\hat g}\,d\xi.
\end{equation*}
By recalling the precise expression for the Fourier transform of the dipolar kernel, see \eqref{kernel:fou}, we note that $\hat K$ is a linear combination of the symbols defining the square of the $j$-th Riesz transform $\mathcal R^2_j:$
\[
\hat K(\xi)=\frac{4\pi}{3}\frac{2\xi_3^2-\xi_2^2-\xi_1^2}{|\xi|^2}=\sum_{j=1}^3\alpha_j\frac{\xi^2_j}{|\xi|^2}.
\]
Consider therefore the generic term $\int \widehat {\nabla_{\bar x}\tilde\rho_R g}(\xi)\cdot\bar \xi \frac{\xi_j^2}{|\xi|^2}\bar{\hat g}\,d\xi$; by adding and subtracting $\frac{\eta_j\bar\eta}{|\eta|}$, we have  
\begin{equation}\label{term:generic}
\begin{aligned}
\int \widehat {g\nabla_{\bar x}\tilde\rho_R}(\xi)\cdot\bar\xi \frac{\xi_j^2}{|\xi|^2}\bar{\hat g}(\xi)\,d\xi&=\int (\widehat{\nabla_{\bar x}\tilde\rho_R}\ast\hat g)(\xi)\cdot\bar\xi \frac{\xi_j^2}{|\xi|^2}\bar{\hat g}(\xi)\,d\xi\\
&=\iint \hat g(\eta)\widehat{\nabla_{\bar x}\tilde\rho_R}(\xi-\eta)\cdot\left(\frac{\xi_j\bar\xi}{|\xi|}+\frac{\eta_j\bar\eta}{|\eta|}-\frac{\eta_j\bar\eta}{|\eta|}\right)\frac{\xi_j}{|\xi|}\hat g(\xi)\,d\eta\,d\xi\\
&=\int \nabla_{\bar x}\tilde\rho_R\cdot\nabla_{\bar x}(\mathcal R_j g)(x)\mathcal R_j\bar g(x)\,dx\\
&+\iint \hat g(\eta)\widehat{\nabla_{\bar x}\tilde\rho_R}(\xi-\eta)\cdot\left(\frac{\xi_j\bar\xi}{|\xi|}-\frac{\eta_j\bar\eta}{|\eta|}\right)\frac{\xi_j}{|\xi|}\hat g(\xi)\,d\eta\,d\xi\\
&=-\frac12\int\Delta_{\bar x}\tilde\rho_R|\mathcal R_j g(x)|^2\,dx\\
&+\iint \hat g(\eta)\widehat{\nabla_{\bar x}\tilde\rho_R}(\xi-\eta)\cdot\left(\frac{\xi_j\bar\xi}{|\xi|}-\frac{\eta_j\bar\eta}{|\eta|}\right)\frac{\xi_j}{|\xi|}\hat g(\xi)\,d\eta\,d\xi.
\end{aligned}
\end{equation}
The first term in the r.h.s. of \eqref{term:generic} is simply estimated by 
\begin{equation}\label{eq:sec5:B-1}
\|u\|^4_{L^4(|\bar x|\geq R/10)}\lesssim R^{-1}\|u\|_{\dot H^1}^2
\end{equation}
due to the $L^2\mapsto L^2$ continuity property of the Riesz transform and \eqref{est:F4}. For the second term in the r.h.s. of \eqref{term:generic} we proceed  in this way. First of all suppose that $j=1,2;$ then 
\begin{equation*}
\begin{aligned}
&\iint \hat g(\eta)\widehat{\nabla_{\bar x}\tilde\rho_R}(\xi-\eta)\cdot\left(\frac{\xi_j\bar\xi}{|\xi|}-\frac{\eta_j\bar\eta}{|\eta|}\right)\frac{\xi_j}{|\xi|}\hat g(\xi)\,d\eta\,d\xi\\
=&\int \frac{\xi_j}{|\xi|}\hat g(\xi)\iint\hat g(\bar \eta, \eta_3)\delta(\xi_3-\eta_3)\widehat{\tilde\rho_R}(\bar\xi-\bar\eta)(\bar\xi-\bar\eta)\cdot\left(\frac{\xi_j\bar\xi}{|\xi|}-\frac{\eta_j\bar\eta}{|\eta|}\right)\,d\bar\eta\,d\eta_3\,d\xi\\
=&\int \frac{\xi_j}{|\xi|}\hat g(\xi)\int\hat g(\bar \eta, \xi_3)\widehat{\tilde\rho_R}(\bar\xi-\bar\eta)(\bar\xi-\bar\eta)\cdot\underbrace{\left(\frac{\xi_j\bar\xi}{|\xi|}-\frac{\eta_j\bar\eta}{\sqrt{|\bar\eta|^2+\xi_3^2}}\right)}_{=:\,\vec F_j(\bar\xi)-\vec F_j(\bar\eta)}\,d\bar\eta\,d\xi. 
\end{aligned}
\end{equation*}
If instead $j=3$ we have 
\begin{equation*}
\begin{aligned}
&\iint \hat g(\eta)\widehat{\nabla_{\bar x}\tilde\rho_R}(\xi-\eta)\cdot\left(\frac{\xi_3\bar\xi}{|\xi|}-\frac{\eta_3\bar\eta}{|\eta|}\right)\frac{\xi_3}{|\xi|}\hat g(\xi)\,d\eta\,d\xi\\
=&\int \frac{\xi_3}{|\xi|}\hat g(\xi)\iint\hat g(\bar \eta, \eta_3)\delta(\xi_3-\eta_3)\widehat{\tilde\rho_R}(\bar\xi-\bar\eta)(\bar\xi-\bar\eta)\cdot\left(\frac{\xi_3\bar\xi}{|\xi|}-\frac{\eta_3\bar\eta}{|\eta|}\right)\,d\bar\eta\,d\eta_3\,d\xi\\
=&\int \frac{\xi_3}{|\xi|}\hat g(\xi)\int\hat g(\bar \eta, \xi_3)\widehat{\tilde\rho_R}(\bar\xi-\bar\eta)(\bar\xi-\bar\eta)\cdot\underbrace{\left(\frac{\xi_3\bar\xi}{|\xi|}-\frac{\xi_3\bar\eta}{\sqrt{|\bar\eta|^2+\xi_3^2}}\right)}_{=:\,\vec F_3(\bar\xi)-\vec F_3(\bar\eta)}\,d\bar\eta\,d\xi. 
\end{aligned}
\end{equation*}
where 
\[
\vec F_j(\bar v)=\frac{v_j \bar v}{\sqrt{|\bar v|^2+\xi^2_3}}=\left(\frac{v_jv_1}{\sqrt{|\bar v|^2+\xi^2_3}},\frac{v_j v_2}{\sqrt{|\bar v|^2+\xi^2_3}} \right),\quad \bar v=(v_1,v_2)\in\R^2,\quad j=1,2
\]
and 
\[
\vec F_3(\bar v)=\frac{\xi_3 \bar v}{\sqrt{|\bar v|^2+\xi^2_3}}=\left(\frac{\xi_3v_1}{\sqrt{|\bar v|^2+\xi^2_3}},\frac{\xi_3 v_2}{\sqrt{|\bar v|^2+\xi^2_3}} \right),\quad \bar v=(v_1,v_2)\in\R^2.
\]
We notice that the Jacobian $J_{\vec F_j}(\bar v)$ of $\vec F_j$ are uniformly bounded, namely $\left|J_{\vec F_j}(\bar v)\right|\lesssim 1$ for any $\bar v\in \R^2,$ for $j=1,2,3$; therefore we can bound the last term in the r.h.s. of \eqref{term:generic} by 
\begin{equation}\label{eq:fin-four}
\begin{aligned}
\int_{\R^3}|\hat g(\xi)|&\int_{\R^2}|\hat g(\bar \eta, \xi_3)|\left|\widehat{\tilde\rho_R}(\bar\xi-\bar\eta)\right||\bar\xi-\bar\eta|^2\,d\bar\eta\,d\xi\\
&\leq\int_{\R^3}|\hat g(\xi)|\int_{\R^2}|\hat g(\bar \eta, \xi_3)|\left|\widehat{\Delta_{\bar x}\rho_R}(\bar\xi-\bar\eta)\right|\,d\bar\eta\,d\xi\\
&+\int_{\R^3}|\hat g(\xi)|\int_{\R^2}|\hat g(\bar \eta, \xi_3)|\left|\widehat{\Delta_{\bar x}|\bar x|^2}(\bar\xi-\bar\eta)\right|\,d\bar\eta\,d\xi\\
&=\int_{\R^3}|\hat g(\xi)|\left( |\hat g(\cdot, \xi_3)|\ast h_R\right)(\bar \xi)\,d\xi+\int_{\R^3}|\hat g(\xi)|\left(|\hat g(\cdot, \xi_3)|\ast 4\delta\right)(\bar \xi)\,d\xi\\
&=\int_{\R^3}|\hat g(\xi)|\left( |\hat g(\cdot, \xi_3)|\ast h_R\right)(\bar \xi)\,d\xi+4\int_{\R^3}|\hat g(\xi)|^2\,d\xi
\end{aligned}
\end{equation}
where we defined $h_R=\left|\widehat{\Delta_{\bar x}\rho_R}\right|.$  We continue in this way: in the first  term in the r.h.s. of \eqref{eq:fin-four} we first apply the Cauchy-Schwarz's inequality and then the Young's inequality for convolutions with respect to $d\bar\xi,$ and eventually  the Cauchy-Schwarz's inequality with respect to $d\xi_3$ to obtain  
\begin{equation*}
\begin{aligned}
\int_{\R^3}|\hat g(\xi)|\left( |\hat g(\cdot, \xi_3)|\ast h_R\right)(\bar \xi)\,d\xi&=\int_{\R}\int_{\R^2}|\hat g(\bar\xi, \xi_3)|\left( |\hat g(\cdot, \xi_3)|\ast h_R\right)(\bar \xi)\,d\bar\xi\,d\xi_3\\
&\leq\int_{\R}\|\hat g(\cdot,\xi_3)\|_{L^2
_{\bar\xi}}\left\| |\hat g(\cdot, \xi_3)|\ast h_R \right\|_{L^2_{\bar\xi}}\,d\xi_3\\
&\leq \|h_R\|_{L^1_{\bar\xi}}\int_{\R}\|\hat g(\cdot, \xi_3)\|_{L^2_{\bar\xi}}^2\,d\xi_3= \|h_R\|_{L^1_{\bar\xi}}\|\hat g\|_{L^2}^2= \|h_R\|_{L^1_{\bar\xi}}\| g\|_{L^2}^2,
\end{aligned}
\end{equation*}
where in the last step we used the Plancherel identity. 
Since by definition the $L^1$-norm of $h_R$ is the $L^1$-norm of the Fourier transform of $\Delta_{\bar x}\rho_R,$ and the latter is a smooth and compactly supported function by its very construction, then its Fourier transform is still in $L^1_{\bar\xi}$ uniformly in $R.$  
Hence the estimate for the last integral in the r.h.s. of \eqref{term:generic}, recalling that $g=|w_{i,o}|^2=|\bold 1_{\{|\bar x|\geq R/10\}}|^2,$ can be concluded with 
\begin{equation}\label{eq:B-3}
\iint \hat g(\eta)\widehat{\nabla_{\bar x}\tilde\rho_R}(\xi-\eta)\cdot\left(\frac{\xi_j\bar\xi}{|\xi|}-\frac{\eta_j\bar\eta}{|\eta|}\right)\frac{\xi_j}{|\xi|}\hat g(\xi)\,d\eta\,d\xi\lesssim \|u\|_{L^4(|\bar x|\geq R/10)}^4\lesssim R^{-1}\|u\|_{\dot H^1}^2,
\end{equation}
where we used \eqref{est:F4} for the last inequality.
By summing up \eqref{eq:sec5:B-1} and \eqref{eq:B-3} we give the bound for the whole $\mathcal B$ term in \eqref{eq:sec5:B-2}:
\begin{equation}\label{eq:B-4}
\mathcal B\lesssim R^{-1}\|u\|_{\dot H^1}^2.
\end{equation} 
The estimate  \eqref{eq:B-4} in conjunction with \eqref{eq:sec5:A-1} conclude the estimate for $\mathcal V_{i,i}^\prime:$
\begin{equation}\label{eq:B-5}
\mathcal V_{i,i}^\prime \lesssim R^{-1}\|u\|_{\dot H^1}^2.
\end{equation} 

To end up with the full estimate leading to the proof of the main theorem, we are left to study the remaining term $\mathcal V_{i,i}^{\prime\prime}$ together with the term $\mathcal W$ as defined  in \eqref{eq:V+W}. 

\subsection{Estimate of the remainder $H_R(u(t))$. The $\mathcal V_{i,i}^{\prime\prime}+\mathcal W$ term} In order to conclude the estimate for the term $\mathcal V+\mathcal W,$ we are left to control the last term $\mathcal V_{i,i}^{\prime\prime}$ coming from the splitting in \eqref{eq:split-V}, and the term $\mathcal W,$ see \eqref{eq:V+W}, that we did not  handle so far.  A straightforward computation gives 
\begin{equation}\label{eq:Vpp-W}
\begin{aligned}
\mathcal V_{i,i}^{\prime\prime}+\mathcal W&=2\int\bar x\cdot\nabla_{\bar x}\left(K\ast|u_i|^2\right)|u_i|^2\,dx+2\int x_3\partial_{x_3}\left(K\ast |u|^2\right)|u|^2\,dx\\
&=2\int x\cdot\nabla\left(K\ast|u_i|^2\right)|u_i|^2\,dx+2\int x_3\partial_{x_3}\left(K\ast|u_i|^2\right)|u_o|^2\,dx\\
&+2\int x_3\partial_{x_3}\left(K\ast|u_o|^2\right)|u_i|^2\,dx+2\int x_3\partial_{x_3}\left(K\ast|u_o|^2\right)|u_o|^2\,dx\\
&=-3\int \left(K\ast|u_i|^2\right)|u_i|^2\,dx+2\int x_3\partial_{x_3}\left(K\ast|u_i|^2\right)|u_o|^2\,dx\\
&+2\int x_3\partial_{x_3}\left(K\ast|u_o|^2\right)|u_i|^2\,dx+2\int x_3\partial_{x_3}\left(K\ast|u_o|^2\right)|u_o|^2\,dx
\end{aligned}
\end{equation}
where we used the identity \eqref{eq:app-1}. By means of \eqref{eq:app-2} with $f=g=|u_o|^2$ we obtain 
\begin{equation}\label{eq:term:oo}
2\int x_3\partial_{x_3}\left(K\ast|u_o|^2\right)|u_o|^2\,dx=-\int\left(K\ast |u_o|^2\right)|u_o|^2\,dx-\int\xi_3(\partial_{\xi_3}\hat K)\widehat{ |u_o|^2}\overline{ \widehat {|u_o|^2}}\,d\xi
\end{equation}
while, again by means of \eqref{eq:app-2}, we write 
\begin{equation}\label{eq:last?}
\begin{aligned}
2\int x_3\partial_{x_3}\left(K\ast|u_i|^2\right)|u_o|^2\,dx+2\int x_3\partial_{x_3}\left(K\ast|u_o|^2\right)|u_i|^2\,dx\\
=-2\int\left(K\ast |u_i|^2\right)|u_o|^2\,dx-2\int\xi_3(\partial_{\xi_3}\hat K)\widehat{ |u_i|^2 }\overline{\widehat{|u_o|^2}}\,d\xi.
\end{aligned}
\end{equation}
We explicitly write  $\xi_3\partial_{\xi_3}\hat K$ and we observe that is bounded:
\begin{equation}\label{eq:der-ker-bound1}
\xi_3\partial_{\xi_3}\hat K=8\pi\frac{\xi_3^2(\xi_1^2+\xi_2^2)}{|\xi|^4}\in L^\infty_\xi;
\end{equation}
hence \eqref{eq:term:oo} is simply estimated by 
\[
2\int x_3\partial_{x_3}\left(K\ast|u_o|^2\right)|u_o|^2\,dx\lesssim \|u_o\|_{L^4}^4\lesssim R^{-1}\|u\|_{\dot H^1}^2
\]
by using the $L^2\mapsto L^2$ continuity of the dipolar kernel for the first integral,  while for the second integral we used the boundedness property \eqref{eq:der-ker-bound1} together with the Plancherel identity, and \eqref{est:F4}.

It remains to handle \eqref{eq:last?}. First of all we note that 
\begin{equation*}
\xi_3\partial_{\xi_3}\hat K=8\pi\frac{\xi_3^2(\xi_1^2+\xi_2^2)}{|\xi|^4}=8\pi\left(\frac{\xi_3^2}{|\xi|^2}-\frac{\xi_3^4}{|\xi|^4}\right)=8\pi \widehat{\mathcal{R}_3^2}-8\pi\widehat{\mathcal{R}_3^4},
\end{equation*}
where, with abuse of notation, we write $\widehat{\mathcal{R}_3^2}$ and $\widehat{\mathcal{R}_3^4}$ to denote the symbols of $\mathcal{R}_3^2$ and $\mathcal{R}_3^4,$ respectively. This in turn implies that the r.h.s. of \eqref{eq:last?} can be rewritten, for some constants $\alpha_1,\alpha_2, \alpha_3,$ as 
\[
-16\pi\int \left(\mathcal R_3^4(|u_i|^2)\right)|u_o|^2\,dx+\sum_{j=1}^3\alpha_j\int\left(\mathcal R_j^2( |u_i|^2)\right)|u_o|^2\,dx.
\]
By splitting $u_i=w_{i,i}+w_{i,o}$ with $w_{i,i}=\bold 1_{\{|\bar x|\leq R/10\}}u_i=\bold 1_{\{|\bar x|\leq R/10\}}u$ and  $w_{i,o}=\bold 1_{\{|\bar x|\geq R/10\}}u_i=\bold 1_{\{R/10\leq |\bar x|\leq 4R\}}u$ we decompose each of the terms in the sum involving  $\mathcal R_j^2$'s as 
\[
\begin{aligned}
\int\left(\mathcal R_j^2( |u_i|^2)\right)|u_o|^2\,dx&=\int\left(\mathcal R_j^2(|w_{i,i}|^2)\right)|u_o|^2\,dx+\int\left(\mathcal R_j^2( |w_{i,o}|^2)\right)|u_o|^2\,dx
\end{aligned}
\]
and by using the Cauchy-Schwarz's inequality, the $L^2\mapsto L^2$ continuity of the Riesz transform, the localization properties of $w_{i,o}$ and $u_{o},$  and \eqref{est:F4} we obtain
\begin{equation}\label{eq:wio-uo}
\int\left(\mathcal R_j^2( |w_{i,o}|^2)\right)|u_o|^2\,dx\lesssim \|w_{i,o}\|_{L^4}^2\|u_o\|_{L^4}^2\lesssim \|u\|_{L^4(|\bar x|\geq R/10)}^4\lesssim R^{-1}\|u\|_{\dot H^1}^2.
\end{equation}
By using \eqref{eq:out-in} we estimate
\begin{equation*}
\int\left(\mathcal R_j^2( |w_{i,i}|^2)\right)|u_o|^2\,dx\lesssim R^{-3}\|w_{i,i}\|_{L^2}^2\|u_o\|^2_{L^2}\lesssim R^{-3}.
\end{equation*}
With the same decomposition of the function $u_i$ we separate 
\[
\int \left(\mathcal R_3^4(|u_i|^2)\right)|u_o|^2\,dx=\int \left(\mathcal R_3^4(|w_{i,i}|^2)\right)|u_o|^2\,dx+\int \left(\mathcal R_3^4(|w_{i,o}|^2)\right)|u_o|^2\,dx;
\] 
then, similarly to \eqref{eq:wio-uo} we  have
\[
\int \left(\mathcal R_3^4(|w_{i,o}|^2)\right)|u_o|^2\,dx\lesssim R^{-1}\|u\|_{\dot H^1}^2,
\]
while by using \autoref{lemma:decay-r4} we can bound 
\[
\int \left(\mathcal R_3^4(|w_{i,i}|^2)\right)|u_o|^2\,dx\lesssim R^{-3}\|w_{i,i}\|_{L^2}^2\|u_o\|_{L^2}^2\lesssim R^{-3}.
\]

The remaining term in \eqref{eq:Vpp-W} is $-3\int \left(K\ast|u_i|^2\right)|u_i|^2\,dx.$ By adding and subtracting $u$ to $u_i$ we can infer, by similar computations done before, that
\begin{equation}\label{eq:very-last}
\begin{aligned}
-3\int \left(K\ast|u_i|^2\right)|u_i|^2\,dx&=-3\int \left(K\ast|u|^2\right)|u|^2\,dx+\epsilon(R, u)\\
\epsilon(R,u)&\lesssim o_R(1)+R^{-1}\|u\|_{\dot H^1}^2.
\end{aligned}
\end{equation}

\subsection{Proof of \autoref{thm:main}}

We can now summarize all the previous contributions towards the final step of the proof. We have, starting from \eqref{eq:intro-vir-3}-\eqref{eq:HVW}, that 
\begin{equation*}
\begin{aligned}
\frac{d^2}{dt^2}V_{\rho_R+x_3^2}(t)&\leq 4\int|\nabla u|^2 dx +6 \lambda_1\int |u|^4 dx+\tilde H_R(u(t))-2\lambda_2(\mathcal V+\mathcal W)\\
&=4\int|\nabla u|^2 dx +6 \lambda_1\int |u|^4 dx+\tilde H_R(u(t))-2\lambda_2(\mathcal V_{o,i}+\mathcal V_{i,i}^\prime+\mathcal V_{i,i}^{\prime\prime}+\mathcal W).
\end{aligned}
\end{equation*}
Thanks to the estimates \eqref{est:Voi}, \eqref{eq:recall-H},  \eqref{eq:B-5}, and \eqref{eq:very-last} which provide a control for the terms $\mathcal V_{o,i},$ $\tilde H_R(u(t)),$ $\mathcal V_{i,i}^\prime,$ and $\mathcal V_{i,i}^{\prime\prime}+\mathcal W,$ respectively, we end up with 
\begin{equation*}
\frac{d^2}{dt^2}V_{\rho_R+x_3^2}(t)\leq 4G(u(t))+o_R(1)+o_R(1)\|u(t)\|_{\dot H^1}^2.
\end{equation*}
Therefore by means of \autoref{lem:2.2}, \autoref{lem:2.3}, and \autoref{coro1}, provided $R\gg1$ we infer that $T_{min} =T_{max}=+\infty$ via a convexity argument. The proof of \autoref{thm:main} is now complete.


\appendix
\section{Identities for  convolution with the dipolar Kernel}\label{app:A}
We recall here some useful identities often used along the proofs contained in the paper. They are basically versions  for the dipolar kernel $K$ of some identities found by Cipolatti, see \cite{Cipo} in the context of the Davey-Stewartson system. For our equation \eqref{GP} the crucial identity is 
\begin{equation}\label{eq:der-ker}
\xi\cdot \nabla_\xi\hat K=0.
\end{equation}
We begin with the following.
\begin{lemma}
Given a (smooth) functions $f$ the following identity holds true:
\begin{equation}\label{eq:app-1}
2\int x\cdot\nabla\left(K\ast f\right)f\,dx=-3\int\left(K\ast f\right)f\,dx
\end{equation}
\end{lemma}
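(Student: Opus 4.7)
The plan is to pass to the Fourier side and exploit the homogeneity relation $\xi\cdot\nabla_\xi\hat K=0$ (which follows from the fact that $\hat K$ is a zero-homogeneous symbol, see \eqref{kernel:fou}). Since $f=|u|^2$ is real and $\hat K$ is real and even, Parseval gives $\int(K\ast f)f\,dx=(2\pi)^{-3}\int\hat K|\hat f|^2\,d\xi$, and it suffices to show that $2\int x\cdot\nabla(K\ast f)\,f\,dx=-3(2\pi)^{-3}\int\hat K|\hat f|^2\,d\xi$.

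First I would compute the Fourier symbol of $x\cdot\nabla(K\ast f)$: using $\widehat{x_j g}(\xi)=i\partial_{\xi_j}\hat g(\xi)$ and $\widehat{\nabla(K\ast f)}=i\xi\hat K\hat f$, one gets
\[
\widehat{x\cdot\nabla(K\ast f)}=-\nabla_\xi\cdot(\xi\hat K\hat f)=-3\hat K\hat f-(\xi\cdot\nabla_\xi\hat K)\hat f-\hat K(\xi\cdot\nabla_\xi\hat f),
\]
and the middle term drops by \eqref{eq:der-ker}. Next, Parseval turns the identity to prove into
\[
\int x\cdot\nabla(K\ast f)\,f\,dx=-3\int(K\ast f)f\,dx-(2\pi)^{-3}\int\hat K(\xi\cdot\nabla_\xi\hat f)\overline{\hat f}\,d\xi,
\]
so the task reduces to showing that the last integral equals $-\frac32\int\hat K|\hat f|^2\,d\xi$. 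The natural move is to symmetrize: write $\xi\cdot\nabla_\xi|\hat f|^2=(\xi\cdot\nabla_\xi\hat f)\overline{\hat f}+(\xi\cdot\nabla_\xi\overline{\hat f})\hat f$, so $2\,\Re\int\hat K(\xi\cdot\nabla_\xi\hat f)\overline{\hat f}\,d\xi=\int\hat K\,\xi\cdot\nabla_\xi|\hat f|^2\,d\xi$, and then integrate by parts in $\xi$ to get $-\int\nabla_\xi\cdot(\xi\hat K)|\hat f|^2\,d\xi=-3\int\hat K|\hat f|^2\,d\xi$, invoking \eqref{eq:der-ker} once more. Since the quantity on the left of \eqref{eq:app-1} is manifestly real, taking real parts closes the chain of identities and yields the claim after multiplication by $2$.

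The only substantive obstacle is the rigorous justification of the two integrations by parts: the one in physical space implicit in the Plancherel application (no boundary contribution from $x\cdot\nabla(K\ast f)\,f$ as $|x|\to\infty$) and the one in frequency (no boundary contribution from $\xi\hat K|\hat f|^2$, together with handling the singularity of $\hat K$ at $\xi=0$). Both are harmless under the standing smooth/decay assumptions on $f$: the symbol $\hat K$ is bounded and the identity $\xi\cdot\nabla_\xi\hat K=0$ holds pointwise away from the origin, which is a null set for the integrals at hand. As a sanity check, an alternative derivation avoiding Fourier altogether is available: using $K(-x)=K(x)$ and the $-3$-homogeneity of $K$ one finds $(K\ast f_\lambda)(x)=(K\ast f)(\lambda x)$ for $f_\lambda(x)=f(\lambda x)$, so $\lambda\mapsto\int(K\ast f_\lambda)f_\lambda\,dx=\lambda^{-3}\int(K\ast f)f\,dx$; differentiating at $\lambda=1$ and then integrating by parts in $x$ reproduces \eqref{eq:app-1}, which I would include as a remark to reassure the reader.
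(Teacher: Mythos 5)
Your Fourier-side argument is correct and is, in substance, the paper's own proof written in dual form. The paper symmetrizes in physical space over a pair $f,g$, using that $\nabla K$ is odd and then passing to frequency only at the last step via $\mathrm{div}_\xi(\xi\hat K)=3\hat K$; you perform the same symmetrization (via taking real parts) and the same integration by parts entirely on the frequency side, with $\xi\cdot\nabla_\xi\hat K=0$ playing the identical role. Both arguments rest on exactly the same homogeneity input, so there is nothing substantive separating them.

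One caveat concerning the ``sanity check'' you propose to append as a remark: as described it is circular. From $(K\ast f_\lambda)(x)=(K\ast f)(\lambda x)$ one gets $\Phi(\lambda):=\int(K\ast f_\lambda)f_\lambda\,dx=\lambda^{-3}\Phi(1)$, hence $\Phi'(1)=\int x\cdot\nabla\bigl[(K\ast f)f\bigr]\,dx=-3\Phi(1)$; but integrating by parts gives $\int x\cdot\nabla(gh)\,dx=-3\int gh\,dx$ for \emph{any} decaying product, so the scaling relation collapses to $-3\Phi(1)=-3\Phi(1)$ and carries no information. To extract \eqref{eq:app-1} from the product-rule expansion $\Phi'(1)=\int x\cdot\nabla(K\ast f)\,f\,dx+\int(K\ast f)\,x\cdot\nabla f\,dx$ you must still show the two summands are equal, i.e.\ that $x\cdot\nabla$ commutes with convolution by $K$ --- and that commutation is precisely the content of the identity $\xi\cdot\nabla_\xi\hat K=0$ you already used. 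So either drop the remark or state the commutation step explicitly; as written it does not reprove the lemma.
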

\begin{proof} The identity is proved by means of some properties of the dipolar kernel and its explicit representation in the frequencies space. We start by considering two (smooth) functions $f,g$ and we write 
\begin{equation*}
\begin{aligned}
\int x\cdot\nabla\left(K\ast f\right)g\,dx&=\int x\cdot\left(\nabla K\ast f\right)g\,dx=\int x \cdot\left(\int\nabla K(x-y)f(y)\,dy\right) g(x)\,dx\\
&=\iint (x-y)\cdot\nabla K(x-y)f(y)g(x)\,dy\,dx+\int \left(\int y\cdot\nabla K(x-y)f(y)\,dy\right)g(x)\,dx,
\end{aligned}
\end{equation*}
and since the dipolar kernel is an odd function, we continue as follows:
\begin{equation*}
\begin{aligned}
\int x\cdot\nabla\left(K\ast f\right)g\,dx&=\int \left( (x\cdot \nabla K) \ast f\right) g-\iint y\cdot\nabla K(y-x)f(y)g(x)\,dy\,dx \\
&=\int \left( (x\cdot \nabla K) \ast f\right) g\,dx-\iint x\cdot\nabla K(x-y)f(x)g(y)\,dx\,dy\\
&=\int \left( (x\cdot \nabla K) \ast f\right) g\,dx-\int x\cdot (\nabla K \ast g)f\,dx
\end{aligned}
\end{equation*}
hence, by using the Plancherel identity and \eqref{eq:der-ker} we conclude that 
\begin{equation}\label{eq:app-4}
\begin{aligned}
\int x\cdot\nabla\left(K\ast f\right)g\,dx+\int x\cdot\nabla\left(K\ast g\right)f\,dx&=\int \left( (x\cdot \nabla K) \ast f\right) g\,dx\\
&=-\int div_{\xi}(\xi\hat K)\hat f\bar{\hat g}=-3\int (K\ast f) g\,dx.
\end{aligned}
\end{equation}
By plugging $f=g$ in \eqref{eq:app-4} we prove \eqref{eq:app-1}.
\end{proof}
A consequence of the previous identities is the following.
\begin{lemma}
Given two (smooth) functions $f,g$ the following identity holds true:
\begin{equation}\label{eq:app-2}
\int x_3\partial_{x_3}\left(K\ast f\right)g\,dx+\int x_3\partial_{x_3}\left(K\ast g\right)f\,dx=-\int\left(K\ast f\right)g\,dx-\int\xi_3(\partial_{\xi_3}\hat K)\hat f\bar{\hat g}\,d\xi.
\end{equation}
\end{lemma}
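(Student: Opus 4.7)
The plan is to adapt, to a single directional derivative, the argument used to establish \eqref{eq:app-1}. First I would expand the convolution and write
\[
\int x_3\,\partial_{x_3}(K\ast f)\,g\,dx = \iint x_3\,(\partial_3 K)(x-y)\,f(y)\,g(x)\,dy\,dx,
\]
and split $x_3 = (x_3-y_3) + y_3$. The piece carrying $x_3-y_3$ becomes, after the change of variables $z=x-y$, exactly $\int\bigl((z_3\partial_3 K)\ast f\bigr)g\,dx$. For the piece carrying $y_3$, the key observation is that $K$ is even, so $\partial_3 K$ is odd; swapping $x\leftrightarrow y$ in the double integral then yields $-\int x_3\,\partial_{x_3}(K\ast g)\,f\,dx$. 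Transposing that term to the left-hand side produces the symmetric identity
\[
\int x_3\,\partial_{x_3}(K\ast f)\,g\,dx + \int x_3\,\partial_{x_3}(K\ast g)\,f\,dx = \int \bigl((z_3\partial_3 K)\ast f\bigr)\,g\,dx.
\]

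Next I would translate the right-hand side to Fourier variables. From $\widehat{z_3 h} = i\partial_{\xi_3}\hat h$ and $\widehat{\partial_3 K} = i\xi_3 \hat K$ one obtains
\[
\widehat{z_3\partial_3 K}(\xi) = -\partial_{\xi_3}\bigl(\xi_3\,\hat K(\xi)\bigr) = -\hat K(\xi) - \xi_3\,(\partial_{\xi_3}\hat K)(\xi),
\]
which plays here the same role as the Euler-type relation $\mathrm{div}_\xi(\xi\hat K) = 3\hat K$ did in \eqref{eq:app-4}. Applying Plancherel to $\int \bigl((z_3\partial_3 K)\ast f\bigr)g\,dx$ then splits it into the contribution $-\int \hat K\hat f\,\overline{\hat g}\,d\xi$, which is exactly $-\int(K\ast f)g\,dx$ read via Plancherel in reverse, and the contribution $-\int \xi_3(\partial_{\xi_3}\hat K)\hat f\,\overline{\hat g}\,d\xi$. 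Combining the two yields \eqref{eq:app-2}.

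I do not anticipate a substantive obstacle, but two points will require care: the sign bookkeeping coming from the odd parity of $\partial_3 K$ in the swap of $x$ and $y$, and the fact that $K$ and $\partial_3 K$ must be interpreted as principal-value distributions, so the manipulations above are best carried out first for $f,g$ in the Schwartz class, where all Fourier and integration-by-parts steps are legitimate, and then extended by density to the regularity actually needed in the applications of the lemma (in this paper, $f=|u_i|^2$, $g=|u_o|^2$, etc., which all lie in $L^1 \cap L^2$).
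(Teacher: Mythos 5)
Your proposal is correct and follows essentially the same route as the paper: the authors' proof of this lemma is a compressed ``similarly to the previous proof'' that carries out exactly your steps --- expanding the convolution, splitting $x_3=(x_3-y_3)+y_3$, using the oddness of $\partial_{x_3}K$ under the swap $x\leftrightarrow y$ to produce the symmetrized left-hand side equal to $\int\bigl((x_3\partial_{x_3}K)\ast f\bigr)g\,dx$, and then applying Plancherel with $\widehat{x_3\partial_{x_3}K}=-\partial_{\xi_3}(\xi_3\hat K)=-\hat K-\xi_3\partial_{\xi_3}\hat K$. Your added remarks on the principal-value interpretation and density are sound but not part of the paper's (formal) argument.
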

\begin{proof}
Similarly to the previous proof we have:
\[
\begin{aligned}
\int x_3\partial_{x_3}\left(K\ast f\right)g\,dx+\int x_3\partial_{x_3}\left(K\ast g\right)f\,dx&=\int\left((x_3\partial_{x_3}K)\ast f\right)g\,dx=-\int\partial_{\xi_3}(\xi_3\hat K)\hat f\bar{\hat g}\,d\xi\\
&=-\int\left(K\ast f\right)g\,dx-\int\xi_3(\partial_{\xi_3}\hat K)\hat f\bar{\hat g}\,d\xi.
\end{aligned}
\]
\end{proof}

\section{A link between $\mathcal R^4_i$ and the parabolic biharmonic equation}\label{app:B}
In this section we provide a representation of the pairing between  a function $g\in L^2$ and the fourth power of the Riesz transform acting on a function $f\in L^2$. Let us consider the parabolic biharmonic equation
\begin{equation}\label{eq:biha}
\partial_t w+\Delta^2w=0, \qquad (t,x)\in\R^+\times\R^3.
\end{equation}
We denote by $P_t$ the linear propagator associated to \eqref{eq:biha}, namely $w(t,x):=P_tw_0(x)$ denotes the solution to the equation \eqref{eq:biha} with initial datum $w_0.$ We have the following representation result. 
\begin{prop}\label{biharm}
For any two functions $f,g \in L^2$, the following identity holds true:
\begin{equation}\label{eq:r4}
\langle \mathcal R_i^4f,g\rangle=-\int_0^\infty\langle \partial_{x_i}^4\frac{d}{dt}P_tf,g\rangle t\,dt.
\end{equation}
\end{prop}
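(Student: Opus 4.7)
The plan is to pass everything to the Fourier side and use the explicit form of the parabolic biharmonic propagator, namely $\widehat{P_t f}(\xi) = e^{-t|\xi|^4}\hat f(\xi)$, which follows at once from applying $\mathcal F$ to \eqref{eq:biha}. Since $\widehat{\partial_{x_i} f} = i\xi_i \hat f$ and $i^4 = 1$, a direct differentiation in $t$ gives the symbol identity
\begin{equation*}
\widehat{\partial_{x_i}^{4}\tfrac{d}{dt} P_t f}(\xi) \,=\, -\,\xi_i^4\, |\xi|^4\, e^{-t|\xi|^4}\,\hat f(\xi),
\end{equation*}
so the minus sign on the right-hand side of \eqref{eq:r4} is precisely what compensates the one produced by $\tfrac{d}{dt}e^{-t|\xi|^4}$.

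Next I would rewrite the pairing on the right-hand side of \eqref{eq:r4} via Plancherel and then swap the order of integration by Fubini, reducing the task to evaluating the elementary $t$-integral
\begin{equation*}
\int_0^\infty t\,|\xi|^4\, e^{-t|\xi|^4}\,dt \,=\, \frac{1}{|\xi|^4}, \qquad \xi\neq 0.
\end{equation*}
This cancels one factor of $|\xi|^4$ and leaves exactly the multiplier $\xi_i^4/|\xi|^4$, which is the Fourier symbol of $\mathcal R_i^4$, since $\bigl(-i\xi_i/|\xi|\bigr)^4 = \xi_i^4/|\xi|^4$. Applying Plancherel in reverse then produces $\langle \mathcal R_i^4 f, g\rangle$ and closes the identity.

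The only point requiring a word of justification is Fubini. For fixed $\xi\neq 0$ the $t$-integrand decays exponentially at infinity and is $O(t)$ near zero, so the $t$-integral is finite; after performing it the resulting $\xi$-integrand equals $\bigl(\xi_i^4/|\xi|^4\bigr)\hat f(\xi)\overline{\hat{\bar g}(\xi)}$ (up to Plancherel constants), which is absolutely integrable by Cauchy--Schwarz because $\xi_i^4/|\xi|^4$ is bounded by $1$ and $\hat f,\hat g\in L^2$. Hence the double integral is absolutely convergent and Fubini applies. I do not expect any real obstacle beyond this verification; the potential singularity of $|\xi|^{-4}$ at the origin is harmless because it is always multiplied by $\xi_i^4$, yielding a bounded multiplier.
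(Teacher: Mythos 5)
Your proposal is correct and follows essentially the same route as the paper: pass to the Fourier side via Plancherel, use $\widehat{P_tf}(\xi)=e^{-t|\xi|^4}\hat f(\xi)$ together with the identity $\xi_i^4|\xi|^4\int_0^\infty e^{-t|\xi|^4}t\,dt=\xi_i^4/|\xi|^4$, and justify the interchange of integrals by Fubini--Tonelli. Your verification of absolute convergence is in fact slightly more explicit than the paper's, but the argument is the same.
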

\begin{proof}
By passing to the frequencies space, it is easy to see that $\widehat {P_tf}(\xi):=e^{-t|\xi|^4}\hat f(\xi)$ and we observe, by integration by parts, that 
\begin{equation}\label{eq:id4}
\xi^4_i|\xi|^4\int_0^\infty e^{-t|\xi|^4}t\,dt=\frac{\xi^4_i}{|\xi|^4};
\end{equation}
hence
\[
\begin{aligned}
\int_0^\infty\langle \partial_{x_i}^4\frac{d}{dt}P_tf,g\rangle t\,dt&=\langle\int_0^\infty \partial_{x_i}^4\frac{d}{dt}(P_tf)t\,dt, g \rangle=\langle\int_0^\infty \xi_i^4\frac{d}{dt}(e^{-t|\xi|^4}\hat f)t\,dt, \hat g\rangle\\
&=- \langle \xi_i^4|\xi|^4\hat f\int_0^\infty e^{-t|\xi|^4} t\,dt,\hat g\rangle= -\langle \frac{\xi_i^4}{|\xi|^4}\hat f,\hat g\rangle=-\langle \mathcal R_i^4f,g\rangle,
\end{aligned}
\]
where the change of order of integration (in time and in space) is justified by means of the Fubini-Tonelli's theorem, and we used the Plancherel identity when passing from the frequencies space to the physical space, and vice versa. 
\end{proof}
We  now observe that we can explicitly write the heat kernel associated to $P_t.$ We introduce, for $t>0$ and $x\in\R^3$ 
\[
p_t(x)=\alpha\frac{k(\mu)}{t^{3/4}}, \quad \mu=\frac{|x|}{t^{1/4}}
\]
and
\[
k(\mu)=\mu^{-2}\int_0^\infty e^{-s^4}(\mu s)^{3/2}J_{1/2}(\mu s)\,ds,
\]
where $J_{1/2}$ is the $\frac12$-th Bessel function, and $\alpha^{-1}:=\frac{4\pi}{3}\int_0^\infty s^2k(s)\,ds$ is a positive normalization constant. We refer to  \cite{FGG} for these definitions and further discussions about the heat kernel of  the parabolic biharmonic equation. 
We recall that the $\frac12$-th Bessel function is given by 
\[J_{1/2}(s)=(\pi/2)^{-1/2} s^{-1/2}\sin(s),
\] 
then  
\[
P_t f(x)=(p_t\ast f)(x)=c\int_{\R^3} f(x-y)\int_0^\infty\frac{1}{|y|^3}e^{-ts^4/|y|^4} s\sin{(s)}\,ds\,dy,
\]
and therefore 
\begin{equation}\label{eq:der:Pt}
\frac{d}{dt}P_t f(x)=-c\int_{\R^3}f(x-y)\int_0^\infty\frac{1}{|y|^3}e^{-ts^4/|y|^4} \frac{s^5}{|y|^4}\sin{(s)} \,ds\,dy.
\end{equation}
By combining \eqref{eq:r4} and \eqref{eq:der:Pt}, we can provide an alternative proof of the integral decay estimate similar to the one in \eqref{eq:int-decay-general}, of the following (weaker) type: for any couple of functions $f,g\in L^1\cap L^2,$ 
\begin{equation*}
|\langle \mathcal R_i^4f,g\rangle|\lesssim d^{-1}\|g\|_{L^1}\|f\|_{L^1},
\end{equation*}
where $d=\mbox{dist}(\mbox{supp}(f),\mbox{supp}(g)).$ We omit the details.

\section*{Acknowledgements}
\noindent 
\noindent J. B. is partially  supported  by Project 2016 ``Dinamica di equazioni nonlineari dispersive'' from  FONDAZIONE DI SARDEGNA.


\begin{bibdiv}
\begin{biblist}

\bib{AEMWC}{article}{
   author={Anderson, M.H.},
   author={Ensher, J.R.},
    author={Matthews, M.R.},
   author={Wieman, C.E.},
   author={Cornell, E.A.},
   title={Observation of Bose-Einstein Condensation in a Dilute Atomic Vapor},
   journal={Science},
   volume={269},
   date={1995},
   number={5221},
   pages={198--201},
}

\bib{AS}{article}{
   author={Antonelli, Paolo},
   author={Sparber, Christof},
   title={Existence of solitary waves in dipolar quantum gases},
   journal={Phys. D},
   volume={240},
   date={2011},
   number={4-5},
   pages={426--431},
   issn={0167-2789},
}

\bib{BaCa}{article}{
   author={Bao, W.},
   author={Cai, Y.},
   title={Mathematical Theory and Numerical methods for Bose-Einstein condensation},
   journal={Kinetic and Related Models AMS},
   volume={6},
   date={2013},
   number={1},
   pages={1--135},
}

\bib{BaCaWa}{article}{
   author={Bao, W.},
   author={Cai, Y.},
   author={Wang, H.},
   title={Efficient numerical method for computing ground states and dynamic of dipolar Bose-Einstein condensates},
   journal={J. Comput. Phys.},
   volume={229},
   date={2010},
   pages={7874--7892},
}
\bib{BF19}{article}{
   author={Bellazzini, Jacopo},
   author={Forcella, Luigi},
   title={Asymptotic dynamic for dipolar quantum gases below the ground
   state energy threshold},
   journal={J. Funct. Anal.},
   volume={277},
   date={2019},
   number={6},
   pages={1958--1998},
   issn={0022-1236},
}
\bib{BJ}{article}{
   author={Bellazzini, Jacopo},
   author={Jeanjean, Louis},
   title={On dipolar quantum gases in the Unstable Regime},
   journal={SIAM J. Math. Anal.},
   volume={48},
   date={2016},
   number={3},
   pages={2028--2058},
   issn={0036-1410},
}	

\bib{BrSaToHu}{article}{
   author={Bradley, C.C.},
   author={Sackett, C. A.},
   author={Tolett, J.J.},
   author={Hulet, R.J.},
   title={Evidence of Bose-Einstein Condensation in an Atomic Gas with Attractive Interaction},
   journal={Phys. Rev. Lett. },
   volume={75},
   date={1995},
   pages={1687--1690},
}		

\bib{CMS}{article}{
   author={Carles, R\'emi},
   author={Markowich, Peter A.},
   author={Sparber, Christof},
   title={On the Gross-Pitaevskii equation for trapped dipolar quantum
   gases},
   journal={Nonlinearity},
   volume={21},
   date={2008},
   number={11},
   pages={2569--2590},
   issn={0951-7715},
}

\bib{CO}{article}{
   author={Cho, Yonggeun},
   author={Ozawa, Tohru},
   title={Sobolev inequalities with symmetry},
   journal={Commun. Contemp. Math.},
   volume={11},
   date={2009},
   number={3},
   pages={355--365},
   issn={0219-1997},
}
\bib{Cipo}{article}{
   author={Cipolatti, Rolci},
   title={On the existence of standing waves for a Davey-Stewartson system},
   journal={Comm. Partial Differential Equations},
   volume={17},
   date={1992},
   number={5-6},
   pages={967--988},
   issn={0360-5302},
}
\bib{DMAVDKK}{article}{
   author={Davis, Kendall B.},
   author={Mewes, M.-O.},
   author={Andrews, Michael R.},
   author={Van Druten, N.J.},
   author={Durfee, D.S.},
   author={Kurn, D.M.},
   author={Ketterle, Wolfgang},
   title={Bose-Einstein condensation in a gas of sodium atoms},
   journal={Physical Review Letters},
   volume={75},
   date={1995},
   number={22},
   pages={3639}
}

\bib{DWZ}{article}{
   author={Du, Dapeng},
   author={Wu, Yifei},
   author={Zhang, Kaijun},
   title={On blow-up criterion for the nonlinear Schr\"{o}dinger equation},
   journal={Discrete Contin. Dyn. Syst.},
   volume={36},
   date={2016},
   number={7},
   pages={3639--3650},
   issn={1078-0947},
}
		
\bib{DHR}{article}{
   author={Duyckaerts, Thomas},
   author={Holmer, Justin},
   author={Roudenko, Svetlana},
   title={Scattering for the non-radial 3D cubic nonlinear Schr\"odinger
   equation},
   journal={Math. Res. Lett.},
   volume={15},
   date={2008},
   number={6},
   pages={1233--1250},
   issn={1073-2780},
}

\bib{FGG}{article}{
   author={Ferrero, Alberto},
   author={Gazzola, Filippo},
   author={Grunau, Hans-Christoph},
   title={Decay and eventual local positivity for biharmonic parabolic
   equations},
   journal={Discrete Contin. Dyn. Syst.},
   volume={21},
   date={2008},
   number={4},
   pages={1129--1157},
   issn={1078-0947},
}
\bib{GM}{article}{
   author={Glangetas, L.},
   author={Merle, Frank},
   title={A Geometrical Approach of Existence of Blow up Solutions in $H^1$
	for Nonlinear Schr\"odinger Equations},
   journal={Laboratoire d'Analyse Num\'erique, Univ. Pierre and Marie Curie},
   date={1995, Rep. No. R95031},
}

\bib{Gla}{article}{
   author={Glassey, R. T.},
   title={On the blowing up of solutions to the Cauchy problem for nonlinear
   Schr\"odinger equations},
   journal={J. Math. Phys.},
   volume={18},
   date={1977},
   number={9},
   pages={1794--1797},
   issn={0022-2488},
}

\bib{Grafakos}{book}{
   author={Grafakos, L.},
   title={Classical Fourier analysis},
   series={Graduate Texts in Mathematics},
   volume={249},
   edition={2},
   publisher={Springer, New York},
   date={2008}
   pages={xvi+489},
}

\bib{HR07}{article}{
   author={Holmer, Justin},
   author={Roudenko, Svetlana},
   title={On blow-up solutions to the 3D cubic nonlinear Schr\"{o}dinger
   equation},
   note={[Issue information previously given as no. 1 (2007)]},
   journal={Appl. Math. Res. Express. AMRX},
   date={2007},
   pages={Art. ID abm004, 31},
   issn={1687-1200},
}

\bib{HR}{article}{
   author={Holmer, {J.}},
   author={Roudenko, {S.}},
   title={A sharp condition for scattering of the radial 3D cubic nonlinear
   Schr\"odinger equation},
   journal={Comm. Math. Phys.},
   volume={282},
   date={2008},
   number={2},
   pages={435--467},
   issn={0010-3616},
}

\bib{KM1}{article}{
   author={Kenig, Carlos E.},
   author={Merle, Frank},
   title={Global well-posedness, scattering and blow-up for the
   energy-critical, focusing, nonlinear Schr\"odinger equation in the radial
   case},
   journal={Invent. Math.},
   volume={166},
   date={2006},
   number={3},
   pages={645--675},
   issn={0020-9910},
}
\bib{KRRT}{article}{
   author={Kuznetsov, E. A.},
   author={Rasmussen, J. Juul},
   author={Rypdal, K.},
   author={Turitsyn, S. K.},
   title={Sharper criteria for the wave collapse},
   note={The nonlinear Schr\"{o}dinger equation (Chernogolovka, 1994)},
   journal={Phys. D},
   volume={87},
   date={1995},
   number={1-4},
   pages={273--284},
   issn={0167-2789},
}
\bib{Inui1}{article}{
   author={Inui, Takahisa},
   title={Global dynamics of solutions with group invariance for the
   nonlinear Schr\"{o}dinger equation},
   journal={Commun. Pure Appl. Anal.},
   volume={16},
   date={2017},
   number={2},
   pages={557--590},
   issn={1534-0392},
}
		
\bib{Inui2}{article}{
   author={Inui, T.},
   title={Remarks on the global dynamics for solutions with an infinite
   group invariance to the nonlinear Schr\"{o}dinger equation},
   conference={
      title={Harmonic analysis and nonlinear partial differential equations},
   },
   book={
      series={RIMS K\^{o}ky\^{u}roku Bessatsu, B70},
      publisher={Res. Inst. Math. Sci. (RIMS), Kyoto},
   },
   date={2018},
   pages={1--32},
}

\bib{LMS}{article}{
   author={Lahaye, T.},
   author={Menotti, C.},
   author={Santos, L.},
   author={Lewenstein, M.},
   author={Pfau, T.},
   title={The physics of dipolar bosonic quantum gases},
   journal={Reports on Progress in Physics},
   volume={72}
   date={2009},
   number={12}
   pages={126401},
}

\bib{LW}{article}{
   author={Lu, J.},
   author={Wu, Y.},
   title={Sharp threshold for scattering of a generalized davey-Stewartson system in three dimension},
   journal={Comm. Pure Appl. Anal.},
   date={2015},
   number={14},
   pages={ 1641--1670},
}
\bib{Mar}{article}{
   author={Martel, Yvan},
   title={Blow-up for the nonlinear Schr\"{o}dinger equation in nonisotropic
   spaces},
   journal={Nonlinear Anal.},
   volume={28},
   date={1997},
   number={12},
   pages={1903--1908},
   issn={0362-546X},
}

\bib{NaPeSa}{article}{
   author={Nath, R.},
   author={Pedri, P.},
   author={Zoller, P.},
   author={Lewenstein, M.},
   title={Soliton-soliton scattering in dipolar Bose-Einstein condensates},
   journal={Phys. Rev. A},
   date={2007},
   number={76},
   pages={ 013606--013613},
}

\bib{OT}{article}{
   author={Ogawa, Takayoshi},
   author={Tsutsumi, Yoshio},
   title={Blow-up of $H^1$ solution for the nonlinear Schr\"odinger equation},
   journal={J. Differential Equations},
   volume={92},
   date={1991},
   number={2},
   pages={317--330},
   issn={0022-0396},
}
\bib{PS}{book}{
   author={Pitaevskii, Lev},
   author={Stringari, Sandro},
   title={Bose-Einstein condensation},
   series={International Series of Monographs on Physics},
   volume={116},
   publisher={The Clarendon Press, Oxford University Press, Oxford},
   date={2003},
   pages={x+382},
   isbn={0-19-850719-4},
}
\bib{RS}{article}{
   author={Rapha\"{e}l, Pierre},
   author={Szeftel, J\'{e}r\'{e}mie},
   title={Standing ring blow up solutions to the $N$-dimensional quintic
   nonlinear Schr\"{o}dinger equation},
   journal={Comm. Math. Phys.},
   volume={290},
   date={2009},
   number={3},
   pages={973--996},
   issn={0010-3616},
}
\bib{SSZL}{article}{
   author={Santos, L.},
   author={Shlyapnikov, G.},
   author={Zoller, P.},
   author={Lewenstein, M.},
   title={Bose-Einstein condensation in trapped dipolar gases},
   journal={Phys. Rev. Lett.},
   date={2000},
   number={85},
   pages={ 1791--1797},
}
\bib{YY1}{article}{
   author={Yi, S.},
   author={You, L.},
   title={Trapped atomic condensates with anisotropic interactions},
   journal={Phys. Rev. A},
   volume={61}
   date={2000},
   number={4}
   pages={041604},
}
\bib{YY2}{article}{
   author={Yi, {S.}},
   author={You, L.},
   title={Trapped condensates of atoms with dipole interactions},
   journal={Phys. Rev. A},
   volume={63}
   date={2001},
   number={5}
   pages={053607},
}

\end{biblist}
\end{bibdiv}

\end{document}